\numberwithin{equation}{section}
\numberwithin{figure}{section}
\theoremstyle{plain}
\newtheorem{thm}{\protect\theoremname}
\theoremstyle{definition}
\newtheorem{defn}[thm]{\protect\definitionname}
\theoremstyle{plain}
\newtheorem{lem}[thm]{\protect\lemmaname}
\theoremstyle{plain}
\newtheorem{prop}[thm]{\protect\propositionname}
\theoremstyle{plain}
\newtheorem{cor}[thm]{\protect\corollaryname}
\providecommand{\corollaryname}{Corollary}
\providecommand{\definitionname}{Definition}
\providecommand{\lemmaname}{Lemma}
\providecommand{\propositionname}{Proposition}
\providecommand{\theoremname}{Theorem}
\begin{document}
\title{A Constructive Brownian Limit Theorem }
\author{$\qquad$$\qquad$$\qquad$$\qquad$Yuen-Kwok Chan \thanks{unaffiliated}
\thanks{Contact: chan314@gmail.com}}

\maketitle
$\qquad$$\qquad$$\qquad$$\qquad$$\qquad$$\qquad$March 6, 2022
\begin{abstract}
In this paper, we present and prove a boundary limit theorem for Brownian
motions for the Hardy space $\mathbf{h}^{p}$ of harmonic functions
on the unit ball in $R^{m}$, where $p\geq1$ and $m\geq2$ are arbitrary.
Our proof is constructive in the sense of \cite{BishopBridges85,Chan21,Chan22}.
Roughly speaking, a mathematical proof is constructive if it can be
compiled into some computer code with the guarantee of exit in a finite
number of steps on execution. A constructive proof of said boundary
limit theorem is contained in \cite{Durret84} for the case of $p>1$.
In this article, we give a constructive proof for $p=1$, which then
implies, via the Lyapunov's inequality, a constructive proof for the
general case $p\geq1$. We conjecture that the result can be used
to give a constructive proof of the nontangential limit theorem for
Hardy spaces $\mathbf{h}^{p}$ with $p\geq1$.

We note that, $ca$ 1970, R. Getoor gave a talk on the Brownian limit
theorem at the University of Washington. We believe that the proof
he presented is constructive only for the case $p>1$ and not for
the case $p=1$. We are however unable to find a reference for his
proof.
\end{abstract}

\section{Preliminaries}

In this section we gather together basic notions in the integration
on an $(m-1)$-sphere, harmonic functions, Brownian motions, and exit
times. These can serve as a basis of future constructive development
in classical potential theory. We will also quote from \cite{Chan21}
Bishop's maximal inequality for martingales. The Brownian limit theorem,
the main theorem of this article, is then an easy application of said
inequality for martingales.

\subsection{Surface-area integration on an $(m-1)$-sphere}

Unless otherwise defined, notations and terminologies in this article
are from \cite{Chan21}; they conform mostly to familiar usage in
the probability literature. 

If $x,y$ are mathematical objects, we write $x\equiv y$ to mean
``$x$ is defined as $y$'', ``$x$, which is defined as $y$'',
``$x,$ which has been defined earlier as $y$'', or any other grammatical
variation depending on the context. 

Following \cite{BishopBridges85}, we define the functions $\cos,\sin:R\rightarrow[-1,1]$
in terms of the familiar power series, and define the constant $\pi$
as twice the smallest positive zero of the function $\cos.$ Without
further mention, we will use properties, proved in typical analysis
text books, of the functions $\cos,\sin$ and the constant $\pi$,
and of related functions. 
\begin{defn}
\textbf{Matrix notations.} The transpose of a matrix $\alpha$ is
denoted by $\alpha^{T}$. Let $p,q\geq1$ be arbitrary integers. Each
point $z\equiv(z_{1},\cdots,z_{p})\in R^{p}$ will be identified with
a column vector $\left[\begin{array}{c}
z_{1}\\
\vdots\\
z_{p}
\end{array}\right]$. In other words, it is regarded as a $p\times1$ matrix. Let $\left\Vert \cdot\right\Vert $
denote the Euclidean norm on the space $M^{p\times q}$ of $p\times q$
matrices, defined by 
\[
\left\Vert \alpha\right\Vert \equiv\sqrt{\sum_{i=1}^{p}\sum_{j=1}^{q}a_{i,j}^{2}}
\]
for each $\alpha\equiv[a_{i,j}]_{i=1,\cdots,p;j=1,\cdots,q}\in M^{p\times q}$.
In particular, $\left\Vert z\right\Vert \equiv\sqrt{z_{1}^{2}+\cdots+z_{p}^{2}}$
for each $z\equiv(z_{1},\cdots,z_{p})\in R^{p}.$ Thus $M^{p\times q}$
is equipped with the Euclidean metric $d_{ecld}$ defined by $d_{ecld}(\alpha,\beta)\equiv\left\Vert \alpha-\beta\right\Vert $
for each $\alpha,\beta\in M^{p\times q}$. At some small risk of confusion,
we suppress the reference to $p$ and $q$ in the definitions of $\left\Vert \cdot\right\Vert $
and of $d_{ecld}$, and will write $0$ for both the real number $0\in R$
and the matrix $0\in M^{p\times q}$ whose entries are all zeros.
Likewise, we will write $I$ for the identity matrix in $M^{p\times p}$.
Suppose, for each $j=1,\cdots,q,$ $\widehat{e}_{j}\equiv[\widehat{e}_{i,j}]_{i=1,\cdots,p}$
is a member of $M^{p\times1}.$ Then we write
\[
[\widehat{e}_{1},\cdots,\widehat{e}_{q}]\equiv[\widehat{e}_{i,j}]_{i=1,\cdots,p;j=1,\cdots,q}
\]
for the matrix in $M^{p\times q}$ whose $j$-th column is equal to
the vector $\widehat{e}_{j}$.

For arbitrary $z,x\in R^{p}$, the $1\times1$ matrix $z^{T}x$ is
identified with its sole entry, denoted by $z\cdot x$ and called
the inner product of the two vectors $z$ and $x$.
\end{defn}

$\square$
\begin{defn}
\textbf{Integration notations.} If $(S,L,\nu)$ is some complete integration
space in the sense of \cite{BishopBridges85}, and if $g\in L$ is
an arbitrary integrable function, then we write $\nu(g)$, $\nu g$,
$\int g(x)\nu(dx)$, and $\int\nu(dx)g(x)$ interchangeably. If, in
addition, $A$ is an integrable subset, then we write $\nu(g1_{A})$,
$\nu(g;A)$, $\int g(x)1_{A}(x)\nu(dx),$ $\int_{x\in A}\nu(dx)g(x),$and
$\int_{x\in A}\nu(dx)g(x)$ interchangeably, and write $A\in L$.
Moreover, if $S\equiv R^{m}$ and $\nu$ is the Lebesgue integration,
then we write $dx$ for $\nu(dx)$, and write 
\[
\int\cdots\int_{(x(1),\cdots,x(m))\in A}g(x_{1},\cdots,x_{m})dx_{1}\cdots dx_{m}
\]
\[
\equiv\int\cdots\int_{x\in A}g(x)dx\equiv\int_{x\in A}g(x)\nu(dx).
\]
If, in addition, $m=1$ and $A$ is a finite interval with end points
$a\leq b$, then we write $\int_{x=a}^{b}g(x)dx$ for $\int1_{A}(x)g(x)dx$.

Suppose $S\in L$ with $\nu(S)=1$. We will then call $(S,L,\nu)$
a probability space. When the sample space $S$ and the probability
integration $\nu$ are understood, we will also call $L$ a probability
space. We will write $\left\Vert g\right\Vert _{L}\equiv\nu|g|$ for
the $L_{1}$-norm of each $g\in L$.
\end{defn}

$\square$
\begin{defn}
\textbf{Miscellaneous notations and conventions.} To lessen the burden
on subscripts, for arbitrary expressions $a$ and $b$ we will write
the expressions $a_{b}$ and $a(b)$ interchangeably. Moreover, if
in a discussion involving a r.r.v. $X,$ the measurability of the
set $(X\leq t)$ is required, it will be assumed that $t$ has been
so chosen to ensure such measurability; similar assumptions when $\leq$
is replaced by $<$, $\geq$, or $>$.

For arbitrary real valued expressions $a$ and $b$, we will write
$a\vee b$ and $a\wedge b$ for $\max(a,b)$ and $\min(a,b)$ respectively.
Note that if $a$, $b$ and $c$ are real valued expressions with
$a\leq c$, then $(a\vee b)\wedge c=$ $a\vee(b\wedge c)$. We will
then omit the parentheses and write 
\[
a\vee b\wedge c\equiv(a\vee b)\wedge c=a\vee(b\wedge c).
\]
Moreover, at some small risk of confusion, we sometimes write $a=b\pm c$
for the inequality $|a-b|\leq c$.

We will let $[\cdot]_{1}$ denote the operation which assigns to each
$a\in R$ an integer $[a]_{1}$ which is in the interval $(a,a+2)$.
Note that there is no constructive proof that the operation $[\cdot]_{1}$
is a function. In other words, there is no guarantee that $[a]_{1}=[b]_{1}$
if $a=b$.
\end{defn}

$\square$
\begin{defn}
\textbf{Angle.} Let $m\geq2$ be arbitrary. Let $x,y\in R^{m}$ be
arbitrary such that $\left\Vert x\right\Vert >0$ and $\left\Vert y\right\Vert >0$.
We will let $\measuredangle(y,x)$ denote the unsigned \emph{angle
between the vectors} $x$ and $y$, defined as that unique real number
$\measuredangle(y,x)\equiv\theta\in[0,\pi]$ such that 
\[
\cos\theta=\frac{x\cdot y}{\left\Vert x\right\Vert \cdot\left\Vert y\right\Vert }.
\]
In other words, 
\[
\measuredangle(y,x)\equiv\arccos(\frac{x\cdot y}{\left\Vert x\right\Vert \cdot\left\Vert y\right\Vert }).
\]
\end{defn}

\begin{lem}
\textbf{\emph{\label{Lem. Continuity of cos(angle(x,y)) }Continuity
of }}\textbf{$\cos\measuredangle(y,x)$.} Let $r>0.$ be arbitrary.
Then $\cos\measuredangle(y,x)$ and $\sin\measuredangle(y,x)$ are
uniformly continuous functions on 
\[
G_{r}\equiv\{(x,y)\in R^{m}\times R^{m}:\left\Vert x\right\Vert \geq r,\left\Vert y\right\Vert \geq r\}.
\]
for each $r>0$. 
\end{lem}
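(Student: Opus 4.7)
The plan is to reduce the claim to uniform continuity of the normalization map $x \mapsto \hat{x} \equiv x/\|x\|$ on $\{x\in R^{m}:\|x\|\geq r\}$, combined with uniform continuity of the inner product on the unit ball, and then handle $\sin$ via the identity $\sin\theta = \sqrt{1-\cos^{2}\theta}$ valid on $[0,\pi]$.

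First I would observe that by the definition of $\measuredangle$,
\[
\cos\measuredangle(y,x) \;=\; \frac{x\cdot y}{\|x\|\,\|y\|} \;=\; \hat{x}\cdot\hat{y}.
\]
The key quantitative step is a Lipschitz estimate for the normalization map on $\{x:\|x\|\geq r\}$. Writing
\[
\hat{x}-\hat{x}' \;=\; \frac{x\,\|x'\|-x'\,\|x\|}{\|x\|\,\|x'\|}
\]
and splitting $x\,\|x'\|-x'\,\|x\| = \|x'\|(x-x') + x'(\|x'\|-\|x\|)$, one gets $\|\hat{x}-\hat{x}'\| \leq 2\|x-x'\|/r$ when $\|x\|,\|x'\|\geq r$, using $|\,\|x\|-\|x'\|\,|\leq \|x-x'\|$. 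Thus $x\mapsto\hat{x}$ is Lipschitz with constant $2/r$ on $\{\|x\|\geq r\}$.

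Second, since $\|\hat{x}\|,\|\hat{y}\|\leq 1$, the bilinear estimate
\[
|\hat{x}\cdot\hat{y} - \hat{x}'\cdot\hat{y}'| \;\leq\; \|\hat{x}-\hat{x}'\| + \|\hat{y}-\hat{y}'\|
\]
gives uniform continuity of $\cos\measuredangle(y,x)$ on $G_{r}$ with an explicit modulus (Lipschitz with constant $4/r$).

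For $\sin\measuredangle(y,x)$ I would use that $\measuredangle(y,x)\in[0,\pi]$, so $\sin\measuredangle(y,x)\geq 0$ and therefore
\[
\sin\measuredangle(y,x) \;=\; \sqrt{1-\cos^{2}\measuredangle(y,x)}.
\]
The function $t\mapsto\sqrt{1-t^{2}}$ is continuous on the compact interval $[-1,1]$, hence uniformly continuous there (with a modulus of the form $\omega(\delta)=\sqrt{2\delta}$), so composing with the already-established uniformly continuous function $\cos\measuredangle$ yields uniform continuity of $\sin\measuredangle$ on $G_{r}$.

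The argument is essentially a routine chain of uniform-continuity compositions; the only point requiring care is the lower bound $\|x\|,\|x'\|\geq r$ which is exactly what keeps the denominator $\|x\|\,\|x'\|$ bounded away from $0$ and makes the normalization map Lipschitz with an explicit constant. This is also the only step where constructivity is non-trivial, but since the bound $2/r$ is computed directly from the hypothesis $r>0$, no case analysis or decidability issue arises.
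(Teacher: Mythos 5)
Your proof is correct and follows essentially the same route as the paper: an elementary Lipschitz estimate for $\cos\measuredangle(y,x)$ exploiting the lower bound $\left\Vert x\right\Vert ,\left\Vert y\right\Vert \geq r$ (you factor it through the normalization map $x\mapsto x/\left\Vert x\right\Vert $, the paper telescopes the quotient directly, yielding constant $2/r$ versus $1/r$ per variable, both valid), followed by $\sin\measuredangle=\sqrt{1-\cos^{2}\measuredangle}$ with the modulus $\sqrt{2\delta}$ for $t\mapsto\sqrt{1-t^{2}}$ on $[-1,1]$. All estimates check out and the moduli are explicit, so the argument is constructively sound.
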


\begin{proof}
Consider each $(x,y),(u,v)\in G_{r}$. Then 
\[
|\cos\measuredangle(y,x)-\cos\measuredangle(u,v)|=|\frac{x\cdot y}{\left\Vert x\right\Vert \cdot\left\Vert y\right\Vert }-\frac{u\cdot v}{\left\Vert u\right\Vert \cdot\left\Vert v\right\Vert }\cdot|
\]
\[
=|\frac{x\cdot y-x\cdot v}{\left\Vert x\right\Vert \cdot\left\Vert y\right\Vert }-\frac{x\cdot v-u\cdot v}{\left\Vert u\right\Vert \cdot\left\Vert v\right\Vert }|\leq|\frac{x\cdot y-x\cdot v}{\left\Vert x\right\Vert \cdot\left\Vert y\right\Vert }|+|\frac{x\cdot v-u\cdot v}{\left\Vert u\right\Vert \cdot\left\Vert v\right\Vert }|
\]
\[
\leq\frac{\left\Vert y-v\right\Vert }{\left\Vert y\right\Vert }+\frac{\left\Vert x-u\right\Vert }{\left\Vert u\right\Vert }
\]
\[
\leq r^{-1}(\left\Vert y-v\right\Vert +\left\Vert x-u\right\Vert ).
\]
Thus the function $\cos\measuredangle(y,x)$ is uniformly continuous
on $G_{r}$. It follows that the function
\[
\sin\measuredangle(y,x)=\sqrt{1-\cos^{2}\measuredangle(y,x)}
\]
is also uniformly continuous on $G_{r}$. 
\end{proof}
$\square$
\begin{defn}
\textbf{\label{Def. Rotation matrix and rotation mapping}Rotation
matrix and rotation mapping.} Let $m\geq2$ be arbitrary. An $m\times m$
matrix $\alpha$ is called an orthogonal matrix if $\alpha^{T}\alpha=I$,
where $I$ is the identity matrix. Let $O(m)$ denote the group of
orthogonal matrices, with matrix multiplication as the group multiplication.
$O(m)$ is called the \emph{orthogonal group}. A member of $O(m)$
is called a rotation matrix if it has determinant $+1$. The subgroup
$SO(m)$ of $O(m)$ consisting of rotation matrices is called the
\emph{rotation group.}

Let $y\in R^{m}$ and $\alpha\in SO(m)$ be arbitrary. Define the
mapping $\rho_{\alpha,y}:R^{m}\rightarrow R^{m}$ by 
\[
\rho_{\alpha,y}(z)\equiv y+\alpha\cdot(z-y)
\]
for each $z\in R^{m}$. Then $\rho_{\alpha,y}$ will be called the
\emph{rotation} of $R^{m}$ about the point  $y\in R^{m}$ by the
rotation matrix $\alpha\in SO(m)$. Note that $\rho_{\alpha,y}$ preserves
distance, with $\rho_{\alpha,y}(y)=y$. Moreover, $\rho_{\alpha,y}\circ\rho_{\alpha^{-1},y}$
is the identity mapping. In other words $\rho_{\alpha^{-1},y}$ is
the inverse function of $\rho_{\alpha,y}$.
\end{defn}

$\square$
\begin{defn}
\textbf{\label{Def. Ball and sphere} Ball and sphere.} Let $m\geq1$,
$x\in R^{m}$, and $r>s>0$ be arbitrary. Then the set 
\[
D_{x,r}^{m}\equiv\{y\in R^{m}:\left\Vert y-x\right\Vert <r\}
\]
is called the \emph{open $m$-ball} with center $x$ and radius $r$.
The set 
\[
\overline{D}_{x,r}^{m}\equiv\{y\in R^{m}:\left\Vert y-x\right\Vert \leq r\}
\]
is called the \emph{closed} \emph{$m$-ball} with center $x$ and
radius $r$. The set 
\[
D_{x,s,r}^{m}\equiv\{y\in R^{m}:s<\left\Vert y-x\right\Vert <r\}
\]
is called the \emph{open} \emph{$m$-shell} with center $x$, radius
$r$, and thickness $r-s$. The set 
\[
\overline{D}_{x,s,r}^{m}\equiv\{y\in R^{m}:s\leq\left\Vert y-x\right\Vert \leq r\}
\]
is called the \emph{closed} \emph{$m$-shell} with center $x$, radius
$r$, and thickness $r-s$. The set 
\[
\partial D_{x,r}^{m}\equiv\{y\in R^{m}:\left\Vert y-x\right\Vert =r\}
\]
is called the $(m-1)$-sphere with center $x$ and radius $r$. 

Each of the above-defined sets inherits the Euclidean metric $d_{ecld}$
from $R^{m}$. Let $C(\overline{D}_{x,r}^{m})$, $C(\overline{D}_{x,s,r}^{m})$,
and $C(\partial D_{x,r}^{m})$ denote the spaces of uniformly continuous
functions on the compact metric spaces $(\overline{D}_{x,r}^{m},d_{ecld})$,
$(\overline{D}_{x,s,r}^{m},d_{ecld})$, and $(\partial D_{x,r}^{m},d_{ecld})$
respectively. 

When the dimension $m$ is understood, and abbreviation desired, we
omit it in the subscripts and write $D_{x,r},$$\overline{D}_{x,r}$,
$D_{x,s,r}$, $\overline{D}_{x,s,r}$, $\partial D_{x,r}$ for $D_{x,r}^{m},$$\overline{D}_{x,r}^{m}$,
$D_{x,s,r}^{m}$, $\overline{D}_{x,s,r}^{m}$, $\partial D_{x,r}^{m}$
respectively.
\end{defn}

$\square$
\begin{defn}
\textbf{\label{Def. Spherical coordinates} Spherical coordinates.
}Let $m\geq2$ be arbitrary. Define the open upper half space
\begin{equation}
R_{+}^{m}\equiv\{x\equiv(x_{1},\cdots,x_{m})\in R^{m}:x_{m}>0\}\label{eq:temp-1}
\end{equation}
and open lower half space
\begin{equation}
R_{-}^{m}\equiv\{x\equiv(x_{1},\cdots,x_{m})\in R^{m}:x_{m}<0\}.\label{eq:temp-1-1}
\end{equation}
For each $x\equiv(x_{1},\cdots,x_{m})\in R_{+}^{m}\cup R_{-}^{m}$
and $i=1,\cdots,m-1$, define
\[
r(x)\equiv\left\Vert x\right\Vert >0,
\]
\[
\theta_{i}(x)\equiv r(x)^{-1}x_{i},
\]
Define the functions 
\[
\varphi_{+}(x)\equiv(r(x),\theta_{1}(x),\cdots,\theta_{m-1}(x))\in(0,\infty)\times D_{0,1}^{m-1}
\]
for each $x\equiv(x_{1},\cdots,x_{m})\in R_{+}^{m}$ and 
\[
\varphi_{-}(x)\equiv(r(x),\theta_{1}(x),\cdots,\theta_{m-1}(x))\in(0,\infty)\times D_{0,1}^{m-1}
\]
for each $x\equiv(x_{1},\cdots,x_{m})\in R_{-}^{m}$ Then the function
\[
\varphi_{+}:R_{+}^{m}\rightarrow(0,\infty)\times D_{0,1}^{m-1}
\]
is an injection. The $m$-tuple $\varphi_{+}(x)$ is called the spherical
coordinates of $x\in R_{+}^{m}$. Similarly, the function 
\[
\varphi_{-}:R_{-}^{m}\rightarrow(0,\infty)\times D_{0,1}^{m-1}
\]
is a injection. The $m$-tuple $\varphi_{-}(x)$ is called the spherical
coordinates of $x\in R_{-}^{m}$. 

Let 
\[
\psi_{+}:(0,\infty)\times D_{0,1}^{m-1}\rightarrow R_{+}^{m}
\]
and
\[
\psi_{-}:(0,\infty)\times D_{0,1}^{m-1}\rightarrow R_{-}^{m}
\]
denote the inverses of $\varphi_{+}$ and $\varphi_{-}$ respectively. 
\end{defn}

$\square$
\begin{prop}
\emph{\label{Prop:-Surface-area-integration on sphere centered at 0}}\textbf{\emph{
Surface-area integration on }}$(m-1)$-\textbf{\emph{sphere centered
at $0$. }}Let $m\geq2$, $y\in R^{m},$ and $r>0$ be arbitrary.
Let $g\in C(\partial D_{0,r})$ be arbitrary. Define
\[
g_{+}(x_{1},\cdots,x_{m})\equiv g(x_{1},\cdots,x_{m})1_{x(m)>0}
\]
and
\[
g_{-}(x_{1},\cdots,x_{m})\equiv g(x_{1},\cdots,x_{m})1_{x(m)<0}
\]
for each $x\equiv(x_{1},\cdots,x_{m})\in\partial D_{0,r}$. Following
page 4 of \cite{Helms69}, define the integral 
\[
\sigma_{m,0,r}(g)\equiv\int\sigma_{m,0,r}(dz)g(z)
\]
\[
\equiv\int\cdots\int_{(\theta_{1},\cdots,\theta_{m-1})\in D^{m-1}(0,1)}d\theta_{1}\cdots d\theta_{m-1}\frac{r^{m-1}}{\sqrt{1-\theta_{1}^{2}-\cdots-\theta_{m-1}^{2}}}
\]
\[
\{g(r\theta_{1},\cdots,r\theta_{m-1},+r\sqrt{1-\theta_{1}^{2}-\cdots-\theta_{m-1}^{2}})
\]
\begin{equation}
+g(r\theta_{1},\cdots,r\theta_{m-1},-r\sqrt{1-\theta_{1}^{2}-\cdots-\theta_{m-1}^{2}})\}\label{eq:temp-51}
\end{equation}
Then the following conditions hold.

1. $\int\sigma_{m,0,r}(dz)g(z)=r^{m-1}\int\sigma_{m,0,1}(dz)g(rz)$.

2. The function $\sigma_{m,0,r}:C(\partial D_{0,r})\rightarrow R$
is an integration on $\partial D_{0,r}$. It will be called the \emph{surface-area
integration} on the $(m-1)$-sphere $\partial D_{0,r}$. 

3. Let $s\in(0,r)$ be arbitrary. Define the constants 
\[
\sigma_{m,r}\equiv\sigma_{m,0,r}(1),
\]
\[
\nu_{m,r}\equiv\int\cdots\int_{x\in D(0,r)}1dx
\]
 and 
\[
\nu_{m,s,r}\equiv\int_{x\in\overline{D}(0,s,r)}1dx=\int_{x\in D(0,s,r)}1dx=\nu_{m,r}-\nu_{m,s}.
\]
Then we have
\begin{equation}
\sigma_{m,r}^{-1}\int_{z\in\partial D(0,r)}\sigma_{m,0,r}(dz)g(z)=\lim_{s\uparrow r}\nu_{m,s,r}^{-1}\int\cdots\int_{x\in D(0,s,r)}g(\frac{rx}{\left\Vert x\right\Vert })dx\label{eq:temp-12}
\end{equation}
for each $g\in C(\partial D_{0,r})$. Thus the surface distribution
$\sigma_{m,r}^{-1}\sigma_{m,0,r}$ can be defined in terms of Lebesgue
integration.

4. Let $y\in R^{m}$ and $\alpha\in SO(m)$ be arbitrary. Define the
\emph{surface area integration} $\sigma_{m,y,r}$ on $\partial D_{y,r}$
by 
\begin{equation}
\sigma_{m,y,r}(g)\equiv\int\sigma_{m,0,r}(dz)g(z-y)\label{eq:temp-49}
\end{equation}
for each $g\in C(\partial D_{y,r})$. Then a function $g$ on $\partial D_{y,r}$
is integrable relative to $\sigma_{m,y,r}$ iff the function $g\circ\rho_{\alpha,y}$
is integrable relative to $\sigma_{m,y,r}$, in which case 
\[
\sigma_{m,y,r}(g)=\sigma_{m,y,r}(g\circ\rho_{\alpha,y}).
\]
In short, the surface-area integration $\sigma_{m,y,r}$ on the $(m-1)$-sphere
$\partial D_{y,r}$ is invariant relative to rotations about $y$.
\end{prop}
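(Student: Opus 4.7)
The plan is to handle Parts 1--4 in order, with a polar-coordinate decomposition
\[
\int_{D_{0,s,r}} h(x)\,dx \;=\; \int_{\rho=s}^{r} \rho^{m-1}\,d\rho \int \sigma_{m,0,1}(dz)\,h(\rho z)
\]
serving as the key technical lemma. Part 1 follows directly from the defining formula (\ref{eq:temp-51}): replacing $g$ by the rescaled function $z \mapsto g(rz)$ at radius $1$ produces an integrand that differs from the radius-$r$ version only by the overall factor $r^{m-1}$. For Part 2, the nontrivial point is that the weight $(1-\theta_{1}^{2}-\cdots-\theta_{m-1}^{2})^{-1/2}$ in (\ref{eq:temp-51}) is unbounded near $\partial D_{0,1}^{m-1}$, so one must check convergence of the improper integral constructively. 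I would truncate the domain to $\{\|\theta\|^{2}\leq 1-\epsilon\}$ and bound the tail via radial coordinates in $R^{m-1}$ by $2\|g\|_{\infty}\,r^{m-1}\sigma_{m-1,1}\int_{\sqrt{1-\epsilon}}^{1}\rho^{m-2}(1-\rho^{2})^{-1/2}\,d\rho$, which has explicit modulus of order $\epsilon^{1/2}$ for $m\geq 2$. The remaining Bishop--Bridges integration axioms (linearity, positivity, sup-norm continuity) then descend from the corresponding properties of Lebesgue integration on the truncated region.

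For Part 3, I would prove the polar decomposition displayed above by approximating $h$ uniformly on the compact shell $\overline{D}_{0,s,r}$ by finite sums of product functions $f_{1}(\|x\|)\,f_{2}(x/\|x\|)$, for which the identity reduces via iterated integration to (\ref{eq:temp-51}) applied to $f_{2}$. Specialising to $h(x)\equiv g(rx/\|x\|)$ and to $h\equiv 1$ then yields
\[
\int_{D_{0,s,r}}\! g\bigl(rx/\|x\|\bigr)\,dx \;=\; \Bigl(\int_{s}^{r}\!\rho^{m-1}d\rho\Bigr)\sigma_{m,0,1}\bigl(g(r\cdot)\bigr), \qquad \nu_{m,s,r}=\sigma_{m,1}\!\int_{s}^{r}\!\rho^{m-1}d\rho,
\]
so after dividing and invoking Part 1 the ratio on the right of (\ref{eq:temp-12}) is in fact independent of $s\in(0,r)$ and equals $\sigma_{m,r}^{-1}\sigma_{m,0,r}(g)$; the limit $s\uparrow r$ is therefore trivial.

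Part 4 then follows almost mechanically from translation and rotation invariance of Lebesgue integration on $R^{m}$. The change of variables $x\mapsto\alpha^{-1}x$ leaves the Lebesgue integral on $D_{0,s,r}$ unchanged and sends the integrand $g(rx/\|x\|)$ to $(g\circ\rho_{\alpha,0})(rx/\|x\|)$, so the ratio in (\ref{eq:temp-12}) is invariant under $g\mapsto g\circ\rho_{\alpha,0}$, giving $\sigma_{m,0,r}(g)=\sigma_{m,0,r}(g\circ\rho_{\alpha,0})$; the translational definition (\ref{eq:temp-49}) then transports the identity to $\sigma_{m,y,r}$ and to the rotation $\rho_{\alpha,y}$ about an arbitrary centre $y$. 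I expect the main obstacle to be the constructive polar decomposition in Part 3: the map $x\mapsto(\|x\|,x/\|x\|)$ must be inverted with uniform Lipschitz control on the shell, and the product-function approximation must be chosen to respect both the modulus of continuity of $g$ and the boundary singularity estimate from Part 2. Restricting attention to $s>0$, where the sphere map is smooth, is what keeps these estimates tractable.
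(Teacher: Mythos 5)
Your proposal is correct and follows essentially the same route as the paper: Part 1 by inspection of the defining formula, Part 3 by the spherical change of variables with Jacobian $t^{m-1}(1-\theta_{1}^{2}-\cdots-\theta_{m-1}^{2})^{-1/2}$ (your polar-decomposition lemma, specialised to $h(x)=g(rx/\left\Vert x\right\Vert )$, is exactly the paper's computation, and both arguments observe that the $s$-dependence cancels so the limit is trivial), and Part 4 by rotation invariance of Lebesgue measure on the shell followed by translation to centre $y$. The only substantive difference is that your Part 2 explicitly establishes convergence of the improper integral near $\partial D_{0,1}^{m-1}$ via truncation with an $O(\epsilon^{1/2})$ tail bound, a point the paper's proof passes over in silence; this is a refinement, not a different approach.
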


\begin{proof}
1. Assertion 1 can be verified directly from the defining equality
\ref{eq:temp-51}.

2. From the defining equality \ref{eq:temp-51}, we see that the function
$\sigma_{m,0,r}$ is linear in $g\in C(\partial D_{0,r})$. Now suppose
$g\in C(\partial D_{0,r})$ is such that $\sigma_{m,0,r}(g)>0$. Then,
because the Lebesgue integration
\[
\int\cdots\int_{(\theta_{1},\cdots,\theta_{m-1})\in D^{m-1}(0,1)}\cdot d\theta_{1}\cdots d\theta_{m-1}
\]
is an integration, the expression in the braces on the right-hand
side of equality \ref{eq:temp-51} is positive at some $(\theta_{1},\cdots,\theta_{m-1})\in D_{m-1,0,1}$.
Consequently, $g(x)>0$ for some
\[
x\equiv(r\theta_{1},\cdots,r\theta_{m-1},+r\sqrt{1-\theta_{1}^{2}-\cdots-\theta_{m-1}^{2}})
\]
or 
\[
x\equiv(r\theta_{1},\cdots,r\theta_{m-1},-r\sqrt{1-\theta_{1}^{2}-\cdots-\theta_{m-1}^{2}}).
\]
Thus we have verified the positivity condition, condition (ii), in
definition 4.2.1 in \cite{Chan21} for the linear function $\sigma_{m,0,r}$
to be an integration on the compact metric space $\partial D_{0,r}$.
Assertion 2 is proved.

3. To prove Assertion 3, let $s\in(0,r)$ be arbitrary. Define the
function $\overline{g}\in C(\overline{D}_{0,s,r})$ by $\overline{g}(x)\equiv g(\frac{rx}{\left\Vert x\right\Vert })$
for each $\overline{D}_{0,s,r}$. 

4. Consider the limit
\[
\lim_{s\uparrow r}\nu_{m,s,r}^{-1}\int\cdots\int_{x\in D(0,s,r)}g(\frac{rx}{\left\Vert x\right\Vert })dx
\]
\[
=\lim_{s\uparrow r}\nu_{m,s,r}^{-1}\int\cdots\int_{x\in R_{+}^{m}D(0,s,r)}g(\frac{rx}{\left\Vert x\right\Vert })dx
\]
\begin{equation}
+\lim_{s\uparrow r}\nu_{m,s,r}^{-1}\int\cdots\int_{x\in R_{-}^{m}D(0,s,r)}g(\frac{rx}{\left\Vert x\right\Vert })dx.\label{eq:temp-8}
\end{equation}
Apply a change of integration-variables to the first integral on the
right-hand side, with
\[
x\equiv(x_{1},\cdots,x_{m})=(t\theta_{1},\cdots,t\theta_{m-1},+t\sqrt{1-\theta_{1}^{2}-\cdots-\theta_{m-1}^{2}}).
\]
The Jacobian is 
\[
|\det\left[\begin{array}{ccccc}
\theta_{1}, & \theta_{2}, & \cdots, & \theta_{m-1}, & (1-\theta_{1}^{2}-\cdots-\theta_{m-1}^{2})^{1/2}\\
t, & 0, & \cdots, & 0, & -t\theta_{1}(1-\theta_{1}^{2}-\cdots-\theta_{m-1}^{2})^{-1/2}\\
0, & t, & \cdots, & 0, & -t\theta_{2}(1-\theta_{1}^{2}-\cdots-\theta_{m-1}^{2})^{-1/2}\\
\vdots & \vdots & \ddots & \vdots & \vdots\\
0, & 0, & \cdots, & t & -t\theta_{m-1}(1-\theta_{1}^{2}-\cdots-\theta_{m-1}^{2})^{-1/2}
\end{array}\right]|
\]
\[
=|t^{m-1}(1-\theta_{1}^{2}-\cdots-\theta_{m-1}^{2})^{1/2}+t^{m-1}\theta_{1}^{2}(1-\theta_{1}^{2}-\cdots-\theta_{m-1}^{2})^{-1/2}
\]
\[
+\cdots+t^{m-1}\theta_{m-1}^{2}(1-\theta_{1}^{2}-\cdots-\theta_{m-1}^{2})^{-1/2}|
\]
\[
=|t^{m-1}(1-\theta_{1}^{2}-\cdots-\theta_{m-1}^{2})^{-1/2}|\cdot|(1-\theta_{1}^{2}-\cdots-\theta_{m-1}^{2})+\theta_{1}^{2}
\]
\[
+\theta_{1}^{2}+\cdots+\theta_{m-1}^{2}|
\]
\[
=|t^{m-1}(1-\theta_{1}^{2}-\cdots-\theta_{m-1}^{2})^{-1/2}|
\]
The first limit on the right-hand side of equality \ref{eq:temp-8}
then becomes
\[
\lim_{s\uparrow r}\nu_{m,s,r}^{-1}\int\cdots\int_{x\in R_{+}^{m}D(0,s,r)}g(\frac{rx}{\left\Vert x\right\Vert })dx
\]
\[
=\lim_{s\uparrow r}\nu_{m,s,r}^{-1}\int_{t=s}^{r}\int\cdots\int_{(\theta_{1},\cdots,\theta_{m-1})\in D^{m-1}(0,1)}dtd\theta_{1}\cdots d\theta_{m-1}\frac{t^{m-1}}{\sqrt{1-\theta_{1}^{2}-\cdots-\theta_{m-1}^{2}}}
\]
\[
g(r\theta_{1},\cdots,r\theta_{m-1},+r\sqrt{1-\theta_{1}^{2}-\cdots-\theta_{m-1}^{2}})
\]
\[
=\lim_{s\uparrow r}(r^{m}\nu_{m,0,1}-s^{m}\nu_{m,0,1})^{-1}m^{-1}(r^{m}-s^{m})
\]
\[
\int\cdots\int_{(\theta_{1},\cdots,\theta_{m-1})\in D^{m-1}(0,1)}d\theta_{1}\cdots d\theta_{m-1}\frac{1}{\sqrt{1-\theta_{1}^{2}-\cdots-\theta_{m-1}^{2}}}
\]
\[
g(r\theta_{1},\cdots,r\theta_{m-1},+r\sqrt{1-\theta_{1}^{2}-\cdots-\theta_{m-1}^{2}})
\]
\[
=\nu_{m,0,1}^{-1}m^{-1}\int\cdots\int_{(\theta_{1},\cdots,\theta_{m-1})\in D^{m-1}(0,1)}d\theta_{1}\cdots d\theta_{m-1}\frac{1}{\sqrt{1-\theta_{1}^{2}-\cdots-\theta_{m-1}^{2}}}
\]
\[
g(r\theta_{1},\cdots,r\theta_{m-1},+r\sqrt{1-\theta_{1}^{2}-\cdots-\theta_{m-1}^{2}}).
\]
Similarly, the second limit on the right-hand side of equality \ref{eq:temp-8}
is equal to
\[
\nu_{m,0,1}^{-1}m^{-1}\int\cdots\int_{(\theta_{1},\cdots,\theta_{m-1})\in D^{m-1}(0,1)}d\theta_{1}\cdots d\theta_{m-1}\frac{1}{\sqrt{1-\theta_{1}^{2}-\cdots-\theta_{m-1}^{2}}}
\]
\[
g(r\theta_{1},\cdots,r\theta_{m-1},-r\sqrt{1-\theta_{1}^{2}-\cdots-\theta_{m-1}^{2}}).
\]
Combining, equality \ref{eq:temp-8} yields
\[
\lim_{s\uparrow r}\nu_{m,s,r}^{-1}\int\cdots\int_{x\in D(0,s,r)}g(\frac{rx}{\left\Vert x\right\Vert })dx
\]
\[
=\nu_{m,0,1}^{-1}m^{-1}\int\cdots\int_{(\theta_{1},\cdots,\theta_{m-1})\in D^{m-1}(0,1)}\frac{1}{\sqrt{1-\theta_{1}^{2}-\cdots-\theta_{m-1}^{2}}}
\]
\[
\{g(r\theta_{1},\cdots,r\theta_{m-1},+r\sqrt{1-\theta_{1}^{2}-\cdots-\theta_{m-1}^{2}})+g(r\theta_{1},\cdots,r\theta_{m-1},-r\sqrt{1-\theta_{1}^{2}-\cdots-\theta_{m-1}^{2}})\}
\]
\[
=\nu_{m,0,1}^{-1}m^{-1}r^{-m+1}\int\cdots\int_{(\theta_{1},\cdots,\theta_{m-1})\in D^{m-1}(0,1)}\frac{r^{m-1}}{\sqrt{1-\theta_{1}^{2}-\cdots-\theta_{m-1}^{2}}}
\]
\[
\{g(r\theta_{1},\cdots,r\theta_{m-1},+r\sqrt{1-\theta_{1}^{2}-\cdots-\theta_{m-1}^{2}})
\]
\[
+g(r\theta_{1},\cdots,r\theta_{m-1},-r\sqrt{1-\theta_{1}^{2}-\cdots-\theta_{m-1}^{2}})\}
\]
\begin{equation}
=\nu_{m,0,1}^{-1}m^{-1}r^{-m+1}\int\sigma_{m,0,r}(dz)g(z),\label{eq:temp-9}
\end{equation}
where the last equality is thanks to the defining equality \ref{eq:temp-51}.

For the special case where $g\equiv1$, equality \ref{eq:temp-9}
reduces to
\[
1=\nu_{m,0,1}^{-1}m^{-1}r^{-m+1}\sigma_{m,r}.
\]
Hence equality \ref{eq:temp-9} can be rewritten as
\[
\lim_{s\uparrow r}\nu_{m,s,r}^{-1}\int\cdots\int_{x\in D(0,s,r)}g(\frac{rx}{\left\Vert x\right\Vert })dx=\sigma_{m,r}^{-1}\int_{z\in\partial D(0,r)}\sigma_{m,0,r}(dz)g(z),
\]
 as alleged in equality \ref{eq:temp-12} of Assertion 3. 

4. Next, let $\alpha$ be an arbitrary rotation matrix. Consider each
$g\in C(\partial D_{0,r})$. Then equality \ref{eq:temp-12} implies
that 
\[
\sigma_{m,0,r}(g\circ\alpha\cdot)=\sigma_{m,r}\lim_{s\uparrow r}\nu_{m,s,r}^{-1}\int\cdots\int_{x\in D(0,s,r)}g(\frac{r\alpha x}{\left\Vert \alpha x\right\Vert })dx
\]
\[
=\sigma_{m,r}\lim_{s\uparrow r}\nu_{m,s,r}^{-1}\int\cdots\int_{x\in D(0,s,r)}g(\frac{r\alpha x}{\left\Vert x\right\Vert })dx
\]
\[
=\sigma_{m,r}\lim_{s\uparrow r}\nu_{m,s,r}^{-1}\int\cdots\int_{x\in D(0,s,r)}g(\alpha x)dx
\]
\[
=\sigma_{m,r}\lim_{s\uparrow r}\nu_{m,s,r}^{-1}\int\cdots\int_{z\in D(0,s,r)}g(z)\cdot|\det\alpha^{-1}|\cdot dz_{1}\cdots dz_{m}
\]
\[
=\sigma_{m,r}\lim_{s\uparrow r}\nu_{m,s,r}^{-1}\int\cdots\int_{z\in D(0,s,r)}g(z)dz_{1}\cdots dz_{m}
\]
\begin{equation}
=\sigma_{m,0,r}(g).\label{eq:temp-7}
\end{equation}

5. Next let $y\in R^{m}$ be arbitrary. Define the integration $\sigma_{m,y,r}$
on $\partial D_{y,r}$ by 
\begin{equation}
\sigma_{m,y,r}(g)\equiv\sigma_{m,0,r}(g(\cdot-y))\label{eq:temp-46}
\end{equation}
for each $g\in C(\partial D_{y,r})$. 

6. Now let $y\in R^{m}$ and $\alpha\in SO(m)$ be arbitrary. Consider
each $g\in C(\partial D_{y,r})$. We will prove that 
\begin{equation}
\sigma_{m,y,r}(g\circ\rho_{\alpha,y})=\sigma_{m,y,r}(g).\label{eq:temp-47}
\end{equation}
To that end, note that the left-hand side of equality \ref{eq:temp-47}
can be written as
\[
\sigma_{m,y,r}(g\circ\rho_{\alpha,y})\equiv\int_{z\in\partial D(y,r)}\sigma_{m,y,r}(dz)g(\rho_{\alpha,y}(z))
\]
\[
=\int_{z\in\partial D(y,r)}\sigma_{m,y,r}(dz)g(y+\alpha\cdot(z-y))
\]
\[
=\int_{u\in\partial D(0,r)}\sigma_{m,0,r}(du)g(y+\alpha u)
\]
\[
=\int_{u\in\partial D(0,r)}\sigma_{m,0,r}(du)g(y+u)
\]
\[
=\int_{v\in\partial D(y,r)}\sigma_{m,y,r}(dv)g(v)\equiv\sigma_{m,y,r}(g),
\]
where the fourth equality is by equality \ref{eq:temp-7}, and where
$g\in C(\partial D_{y,r})$ is arbitrary. Thus the integrations $\sigma_{m,y,r}(\cdot\circ\rho_{\alpha,y})$
and $\sigma_{m,y,r}$ on $\partial D_{y,r}$ are equal. Therefore
a function $g$ on $\partial D_{y,r}$ is integrable relative to $\sigma_{m,y,r}$
iff the function $g\circ\rho_{\alpha,y}$ is integrable relative to
$\sigma_{m,y,r}$, in which case $\sigma_{m,y,r}(g)=\sigma_{m,y,r}(g\circ\rho_{\alpha,y})$.
Assertion 4 and the proposition are proved.
\end{proof}
\begin{defn}
\textbf{Total surface-area of an $(m-1)$-sphere centered at}\textbf{\emph{
$0$.}} The integral $\sigma_{m,r}\equiv\sigma_{m,0,r}(1)$ is called
the \emph{total surface area} of the $(m-1)$-sphere $\partial D_{m,0,r}$.
The Lebesgue integral $\nu_{m,r}\equiv\int_{x\in D_{m,0,r}}1dx$ is
called the \emph{total volume} of the unit ball $D_{m,0,r}$. 
\end{defn}

\begin{prop}
\textbf{\emph{Total volume of $m$-ball and total surface area of
$(m-1)$-sphere.}} Let $m\geq2$ and $r>0$ be arbitrary. Then the
following conditions hold.

1. $\nu_{m,r}=r^{m}\nu_{m,1}.$ 

2. $\sigma_{m,r}=r^{m-1}\sigma_{m,1}$.

3. $\nu_{m,1}=\frac{1}{m}\sigma_{m,1}$.

4. 
\[
\sigma_{m,1}=\begin{cases}
\begin{array}{c}
\frac{\pi^{m/2}m}{(m/2)!}\qquad\qquad\qquad\qquad\qquad\mathrm{if\;\mathit{m}\;is\;even},\\
\frac{2^{(m+1)/2}\pi^{(m-1)/2}}{1\cdot3\cdot5\cdots(m-2)}\qquad\qquad\qquad\mathrm{if\;\mathit{m}\;is\;odd}.
\end{array}\end{cases}
\]
\end{prop}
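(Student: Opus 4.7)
The plan is to prove the four assertions largely by reduction to the preceding proposition on surface-area integration, with only Assertion~4 requiring genuine computation.

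For Assertion~1, I would apply the scaling change-of-variables $x = r y$ to the Lebesgue integral $\nu_{m,r} = \int_{D_{0,r}} 1\,dx$; the Jacobian is $r^{m}$ and the region $D_{0,r}$ pulls back to $D_{0,1}$. For Assertion~2, I would specialize Assertion~1 of the preceding proposition to the constant function $g \equiv 1$, obtaining $\sigma_{m,r} = \sigma_{m,0,r}(1) = r^{m-1}\sigma_{m,0,1}(1) = r^{m-1}\sigma_{m,1}$. For Assertion~3, I would reuse the identity that appeared en route to equation~\ref{eq:temp-12}, namely $1 = \nu_{m,0,1}^{-1}\, m^{-1}\, r^{-m+1}\, \sigma_{m,r}$, which at $r = 1$ reads $\sigma_{m,1} = m\,\nu_{m,1}$.

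For Assertion~4, I would set up a recursion by slicing the ball along the last coordinate. Fubini on the Lebesgue integration, together with Assertion~1 in dimension $m-1$, yields
\[
\nu_{m,1} \;=\; \int_{-1}^{1}\nu_{m-1,\sqrt{1-t^{2}}}\,dt \;=\; \nu_{m-1,1}\int_{-1}^{1}(1-t^{2})^{(m-1)/2}\,dt.
\]
The substitution $t = \sin\theta$ turns the remaining one-dimensional integral into $I_{m} \equiv \int_{-\pi/2}^{\pi/2}\cos^{m}\theta\,d\theta$, which satisfies the Wallis recursion $I_{m} = \tfrac{m-1}{m}\,I_{m-2}$ (proved by integration by parts) with base values $I_{0} = \pi$ and $I_{1} = 2$. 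A routine induction on $m$, separated by parity and started from $\nu_{2,1} = \pi$ and $\nu_{3,1} = \tfrac{4}{3}\pi$, produces closed forms for $\nu_{m,1}$; Assertion~3 then converts these into the stated formulas for $\sigma_{m,1}$.

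The only real obstacle I anticipate is the bookkeeping in the two parity cases: one must verify that the telescoped product arising from the Wallis recursion, multiplied by the appropriate base value of $\nu_{m,1}$, collapses exactly into the two displayed expressions $\pi^{m/2}m/(m/2)!$ and $2^{(m+1)/2}\pi^{(m-1)/2}/(1\cdot 3\cdots(m-2))$. Every analytic ingredient (scaling, Fubini, integration by parts, trigonometric substitution) is routine and constructive, so no difficulty beyond careful indexing should arise.
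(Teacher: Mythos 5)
Your proposal is correct, but it is worth noting that the paper does not actually prove this proposition at all: its ``proof'' is a one-line citation to pages 3--5 of Helms. You instead supply a complete, self-contained derivation, and all of its pieces check out. Assertions~1 and~2 follow exactly as you say from the scaling substitution and from Assertion~1 of Proposition~\ref{Prop:-Surface-area-integration on sphere centered at 0} with $g\equiv 1$; Assertion~3 is indeed already available as the displayed identity $1=\nu_{m,0,1}^{-1}m^{-1}r^{-m+1}\sigma_{m,r}$ extracted in Step~4 of the proof of that proposition. For Assertion~4, the Fubini slicing $\nu_{m,1}=\nu_{m-1,1}\int_{-1}^{1}(1-t^{2})^{(m-1)/2}\,dt$, the substitution $t=\sin\theta$, and the Wallis recursion $I_{m}=\tfrac{m-1}{m}I_{m-2}$ with $I_{0}=\pi$, $I_{1}=2$ do telescope to the stated closed forms (I verified the parity bookkeeping: e.g.\ $(m-1)(m-3)\cdots 2=2^{(m-1)/2}((m-1)/2)!$ for odd $m$, after which Assertion~3 converts $\nu_{m,1}$ into the displayed $\sigma_{m,1}$). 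What your route buys is consistency with the paper's stated constructive program --- every step (scaling, Fubini for Lebesgue integration, integration by parts) is explicitly constructive and avoids deferring to an external classical text --- at the cost of the parity-indexed computation you already flagged. The only point to make explicit in a final write-up is the base case $\nu_{1,1}=2$ needed to start the recursion at $m=2$.
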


\begin{proof}
See pages 3, 4, and 5 of \cite{Helms69}.
\end{proof}
\begin{defn}
\label{Def. Uniform distribttionn on (m-1)-sphere}\textbf{ Uniform
distribution on $(m-1)$-sphere. }The normalized surface area integration
$\overline{\sigma}_{m,y,r}\equiv\sigma_{m,r}^{-1}\sigma_{m,y,r}$
is called the \emph{uniform distribution} on the $(m-1)$-sphere $\partial D_{y,r}$.
Note that, since $\sigma_{m,y,r}$ is invariant relative to rotations
$\rho_{\alpha,y}$ about the center $y$, so is the uniform distribution
$\overline{\sigma}_{m,y,r}$. We will write $L(\overline{\sigma}_{m,y,r})$
for the probability space generated by the family $C(\partial D_{y,r})$
of continuous functions on $\partial D_{y,r}$ relative to the uniform
distribution $\overline{\sigma}_{m,y,r}$.
\end{defn}

\begin{thm}
\label{Thm. distribution on D(0,1) invariant relative to rotation is equal to uniform distribution}
\textbf{\emph{Each distribution on the $(m-1)$-sphere $\partial D_{0,1}$
that is invariant relative to rotations is equal to the uniform distribution.
}}Let $\sigma$ be an arbitrary distribution on $\partial D_{0,1}$that
is invariant relative to rotations. Then $\sigma=\overline{\sigma}_{m,0,1}$.
\end{thm}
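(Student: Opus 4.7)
My plan is to use the standard Haar-measure averaging argument, adapted to the constructive framework of \cite{BishopBridges85,Chan21}. It suffices to show $\sigma(g)=\overline{\sigma}_{m,0,1}(g)$ for each $g\in C(\partial D_{0,1})$, since the continuous functions generate the integration. First, I would secure a bi-invariant Haar probability integration $\mu$ on the compact metric group $SO(m)$, together with the product integration $\mu\times\sigma$ on $SO(m)\times\partial D_{0,1}$. Constructively, $\mu$ can be built by induction on $m$ using the surjection $SO(m)\to\partial D_{0,1}$, $\alpha\mapsto\alpha e_{1}$, whose fibers are copies of $SO(m-1)$; the uniform distribution $\overline{\sigma}_{m,0,1}$ on the base and (by the inductive hypothesis) Haar on the fiber together determine $\mu$.

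Next, I define $h(z)\equiv\int_{SO(m)}\mu(d\alpha)g(\alpha z)$ on $\partial D_{0,1}$ and show that $h$ is constant, equal to some $c_{g}\in R$. For this, note that $SO(m)$ acts transitively on $\partial D_{0,1}$: given $z,z'\in\partial D_{0,1}$, a rotation $\beta\in SO(m)$ with $\beta z=z'$ can be exhibited explicitly by Gram--Schmidt, starting from $z$ and $z'$ and adjusting one column's sign to ensure $\det\beta=+1$. Then the right-invariance of $\mu$ gives
\[
h(z')=h(\beta z)=\int\mu(d\alpha)g(\alpha\beta z)=\int\mu(d\gamma)g(\gamma z)=h(z).
\]

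Finally, by the rotation invariance of $\sigma$ we have $\int\sigma(dz)g(\alpha z)=\sigma(g)$ for every $\alpha\in SO(m)$. Integrating over $\alpha$ against $\mu(d\alpha)$ and interchanging the order of integration (licit because the joint integrand $(\alpha,z)\mapsto g(\alpha z)$ is uniformly continuous on the compact product space, hence $\mu\times\sigma$-integrable),
\[
\sigma(g)=\int\mu(d\alpha)\int\sigma(dz)g(\alpha z)=\int\sigma(dz)h(z)=c_{g}.
\]
The identical calculation with $\overline{\sigma}_{m,0,1}$ in place of $\sigma$, using the rotation invariance established in Proposition \ref{Prop:-Surface-area-integration on sphere centered at 0}, Assertion 4, yields $\overline{\sigma}_{m,0,1}(g)=c_{g}$ as well. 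The two integrals of $g$ therefore coincide, and the theorem follows.

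The main obstacle is the constructive construction of the Haar integration on $SO(m)$ and the Fubini-style interchange on $\mu\times\sigma$. Both are available within the Bishop framework once one has the surface-area integrations on spheres of all dimensions (already supplied by Proposition \ref{Prop:-Surface-area-integration on sphere centered at 0}), but the bookkeeping is not entirely routine; an alternative that sidesteps $SO(m)$ altogether is to induct on $m$, disintegrating a rotation-invariant distribution along the parallels $\{z_{m}=t\}$ and using the inductive hypothesis to pin down the conditional law on each parallel uniformly in $t$, so that $\sigma$ is determined by the single pushforward law of $z_{m}$, which in turn is forced by rotation invariance to coincide with that of $\overline{\sigma}_{m,0,1}$.
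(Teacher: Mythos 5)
Your argument is correct in outline but takes a genuinely different route from the paper. The paper does not construct Haar integration on $SO(m)$ at all: it builds an explicit continuous section $\psi:\partial D_{0,1}\rightarrow SO(m)$ of the orbit map $\alpha\mapsto\alpha\widehat{e}$ (sending $z$ to the rotation by angle $\theta_{z}$ in the plane spanned by $\widehat{e}$ and $z$, acting as the identity on the orthogonal complement), uses $\psi$ to transport the group structure onto the sphere itself, observes that a rotation-invariant distribution is then invariant under left translation in this compact group, and concludes by the \emph{uniqueness} theorem for translation-invariant integrations (theorem (1.19) of chapter 8 of \cite{BishopBridges85}). You instead use the \emph{existence} of a bi-invariant Haar probability integration $\mu$ on $SO(m)$, transitivity of the action, and a Fubini interchange to show $\sigma(g)=\int\sigma(dz)\,h(z)=c_{g}=\overline{\sigma}_{m,0,1}(g)$. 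Your averaging step is complete and constructively unobjectionable (the integrand $(\alpha,z)\mapsto g(\alpha z)$ is uniformly continuous on a compact product, and pointwise transitivity is all that is needed to see $h$ is constant). What each approach buys: yours avoids the paper's delicate claim that the image $\psi(\partial D_{0,1})$ is itself a group closed under multiplication, at the cost of having to secure Haar integration on $SO(m)$; the inductive fibration construction you sketch is the least routine part of your plan, but you can sidestep it entirely by citing the existence half of chapter 8 of \cite{BishopBridges85} (Haar measure on a compact, hence locally compact, group), i.e.\ the same source the paper already relies on for uniqueness. With that citation in place of the sketch, your proof is complete.
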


\begin{proof}
For ease of notations, we will give the proof only for the case $m\geq3$.
The case where $m=2$ would be along similar lines and simpler. The
proof will be by means of Haar measures as in chapter 8 of \cite{BishopBridges85}.

1. Let the points $\widehat{x}_{1},\cdots,\widehat{x}_{m}\in R^{m}$
be the natural basis of $R^{m}$. In other words, $\left[\widehat{x}_{1},\cdots,\widehat{x}_{m}\right]$
is the identity $m\times m$ matrix $I$.

2. First consider the case where $y=0\in R^{m}$ and $r=1$. Fix the
reference point $\widehat{e}\equiv\widehat{e}_{1}\equiv\widehat{x}_{1}\equiv(1,0,\cdots,0)\in\partial D_{0,1}.$
For each $k\geq1$, define the set 
\[
G_{k}\equiv\{z\in\partial D_{0,1}:\left\Vert z-\widehat{e}\right\Vert \geq2^{-k}\}.
\]
Define the dense subsets 
\[
G\equiv\bigcup_{k=1}^{\infty}G_{k}\equiv\{z\in\partial D_{0,1}:\left\Vert z-\widehat{e}\right\Vert >0\}
\]
and 
\[
G_{\vee}\equiv G\cup\{\widehat{e}\}.
\]
of $(\partial D_{0,1},d_{ecld})$. 

3. Consider each $z\in\partial D_{0,1}$. Let $\theta_{z}\equiv\measuredangle(\widehat{e},z)\in[0,\pi]$.
Then 
\[
\cos\theta_{z}=\frac{z\cdot\widehat{e}}{\left\Vert z\right\Vert \cdot\left\Vert \widehat{e}\right\Vert }=z\cdot\widehat{e}.
\]
Thus $\cos\theta_{z}$ is a uniformly continuous function of $z\in\partial D_{0,1}$.
Hence $\sin\theta_{z}=\sqrt{1-\cos^{2}\theta_{z}}$ is also a uniformly
continuous function of $z\in\partial D_{0,1}$. 

4. Next, let $k\geq1$ and $z\in G_{k}$ be arbitrary. Write $i_{1}\equiv1$
and $i_{2}\equiv2$. Let $\widehat{e}_{1,z}\equiv\widehat{e}_{1}\equiv\widehat{e}$
and define 
\begin{equation}
\widehat{e}_{2}\equiv\widehat{e}_{2,z}\equiv\frac{z-(z\cdot\widehat{e})\widehat{e}}{\left\Vert z-(z\cdot\widehat{e})\widehat{e}\right\Vert }=\frac{z-(\cos\theta_{z})\widehat{e}}{\left\Vert z-(\cos\theta_{z})\widehat{e}\right\Vert }.\label{eq:temp-41}
\end{equation}
Then 
\[
\left\Vert z-(z\cdot\widehat{e})\widehat{e}\right\Vert ^{2}=1-(z\cdot\widehat{e})^{2}=2^{-1}(2-2(z\cdot\widehat{e})^{2})=2^{-1}\left\Vert z-\widehat{e}\right\Vert ^{2}\geq2^{-k-1}.
\]
Hence $\widehat{e}_{2,z}$ is well defined and is a uniformly continuous
function on $G_{k}$, where $k\geq1$ is arbitrary. Moreover, $\widehat{e}_{1}\cdot\widehat{e}_{2}=0$.
Furthermore, equality \ref{eq:temp-41} yields
\begin{equation}
(\cos\theta_{z})\widehat{e}_{1}+\left\Vert z-(\cos\theta_{z})\widehat{e}_{1}\right\Vert \widehat{e}_{2}=z.\label{eq:temp-42}
\end{equation}
Since $\widehat{e}_{1},\widehat{e}_{2},z$ are unit vectors, while
$\widehat{e}_{1},\widehat{e}_{2}$ are mutually orthogonal, it follows
that
\[
(\cos\theta_{z})^{2}+\left\Vert z-(\cos\theta_{z})\widehat{e}_{1}\right\Vert ^{2}=1,
\]
whence
\[
\left\Vert z-(\cos\theta_{z})\widehat{e}_{1}\right\Vert =\sqrt{1-(\cos\theta_{z})^{2}}=\sin\theta_{z}.
\]
Therefore equality \ref{eq:temp-42} can be restated as
\begin{equation}
(\cos\theta_{z})\widehat{e}_{1}+(\sin\theta_{z})\widehat{e}_{2}=z.\label{eq:temp-44}
\end{equation}

3. Let 
\[
V_{z}\equiv\{c_{1}\widehat{e}_{1}+c_{2}\widehat{e}_{2}:c_{2},c_{2}\in R\}
\]
be the $2$-dimensional subspace spanned by $\widehat{e}_{1}.\widehat{e}_{2}$.
Let .
\[
V_{z}^{\bot}\equiv\{u\in R^{m}:u\cdot v=0\quad\mathrm{for\;each}\;v\in V\}
\]
be the $(m-1)$-dimensional subspace that is orthogonal to $V_{z}$.
Let $\widehat{e}_{3},\cdots,\widehat{e}_{m}$ be an arbitrary orthonormal
basis of $V_{z}^{\bot}$. Then $\widehat{e}_{1},\cdots,\widehat{e}_{m}$
be an arbitrary orthonormal basis of $R^{m}$. 

4. Define the rotation matrix
\[
\alpha_{z}\equiv\left[\widehat{e}_{1},\widehat{e}_{2},\widehat{e}_{3},\cdots,\widehat{e}_{m}\right]\left[\begin{array}{cccccc}
\cos\theta_{z}, & -\sin\theta_{z}, & 0, & 0, & \cdots, & 0\\
\sin\theta_{z}, & \cos\theta_{z} & 0, & 0, & \cdots, & 0\\
0, & 0, & 1, & 0, & \cdots, & 0\\
0, & 0, & 0, & 1, & \cdots, & 0\\
\vdots & \vdots & \vdots & \vdots & \ddots & \vdots\\
0, & 0, & 0, & 0, & 0, & 1
\end{array}\right]\left[\begin{array}{c}
\widehat{e}_{1}^{T}\\
\widehat{e}_{2}^{T}\\
\widehat{e}_{3}^{T}\\
\widehat{e}_{4}^{T}\\
\vdots\\
\widehat{e}_{m}^{T}
\end{array}\right]
\]
\[
=\left[\widehat{e}_{1},\widehat{e}_{2},\widehat{e}_{3},\widehat{e}_{4},\cdots,\widehat{e}_{m}\right]\left[\begin{array}{c}
\widehat{e}_{1}^{T}\cos\theta_{z}-\widehat{e}_{2}^{T}\sin\theta_{z}\\
\widehat{e}_{1}^{T}\sin\theta_{z}+\widehat{e}_{2}^{T}\cos\theta_{z}\\
\widehat{e}_{3}^{T}\\
\widehat{e}_{4}^{T}\\
\vdots\\
\widehat{e}_{m}^{T}
\end{array}\right]
\]
\begin{equation}
=\widehat{e}_{1}(\widehat{e}_{1}^{T}\cos\theta_{z}-\widehat{e}_{2}^{T}\sin\theta_{z})+\widehat{e}_{2}(\widehat{e}_{1}^{T}\sin\theta_{z}+\widehat{e}_{2}^{T}\cos\theta_{z})+\sum_{i=3}^{m}\widehat{e}_{i}\widehat{e}_{i}^{T}.\label{eq:temp-26}
\end{equation}
Right multiplication by $\widehat{e}_{1}$ yields
\[
\alpha_{z}\widehat{e}_{1}=\widehat{e}_{1}\cos\theta_{z}+\widehat{e}_{2}\sin\theta_{z}=z.
\]
Right multiplication by $\widehat{e}_{2}$ yields
\[
\alpha_{z}\widehat{e}_{2}=-\widehat{e}_{1}\sin\theta_{z}+\widehat{e}_{2}\cos\theta_{z}
\]

5. We will show that the matrix $\alpha_{z}$ is independent of $\widehat{e}_{3},\cdots,\widehat{e}_{m}.$
To that end, let $\widetilde{e}_{3},\cdots,\widetilde{e}_{m}$ be
a second arbitrary orthonormal basis for $V^{\bot}$. Then there exists
an orthogonal matrix $\beta\in O(m)$ such that $\widehat{e}_{i}=\widetilde{e}_{i}\beta$
for each $i=1,\cdots,m$. Hence
\[
\sum_{i=3}^{m}\widehat{e}_{i}\widehat{e}_{i}^{T}=\sum_{i=3}^{m}\widetilde{e}_{i}\beta\beta^{T}\widehat{e}_{i}^{T}=\sum_{i=3}^{m}\widetilde{e}_{i}\widehat{e}_{i}^{T}.
\]
Equality \ref{eq:temp-26} therefore shows that the matrix $\alpha_{z}$
is independent of the orthonormal basis $\widehat{e}_{3},\cdots,\widehat{e}_{m}$
of the space $V^{\bot},$ and is completely determined by $z$, where
$z\in G_{k}$ and $k\geq1$ are arbitrary. 

6. Therefore the operation $\psi:G_{\vee}\rightarrow SO(m)$ defined
on 
\[
G_{\vee}\equiv\bigcup_{k=1}^{\infty}G_{k}\cup\{\widehat{e}\}\equiv G\cup\{\widehat{e}\}.
\]
by 
\[
\psi(z)\equiv\alpha_{z}
\]
for each $z\in G$ and by $\psi(\widehat{e})\equiv I$, is a well
defined function. Moreover, 
\[
\psi(z)\widehat{e}=z.
\]
for each $z\in G.$ 

7. We have already seen that the function $\psi$ is uniformly continuous
on each compact subset of $G$. We will next prove that $\psi$ is
uniformly continuous on $G_{\vee}$. To that end, it suffices to prove
that $\psi$ is continuous at $\widehat{e}$. Note that, as $z\rightarrow\widehat{e}$
with $z\in G$, we have $\cos\theta_{z}\rightarrow1$ and $\sin\theta_{z}\rightarrow0$.
Hence, equality \ref{eq:temp-26} shows that, as $z\rightarrow\widehat{e}$,
we have 
\[
\psi(z)\equiv\alpha_{z}\rightarrow\sum_{i=1}^{m}\widehat{e}_{i}\widehat{e}_{i}^{T}=I\equiv\psi(\widehat{e}).
\]
Thus $\psi$ is continuous at $\widehat{e}$, and therefore uniformly
continuous on $G_{\vee}$. 

8. Since $G_{\vee}$ is a dense subset of $\partial D_{0,1}$ the
function $\psi$ can be extended to a continuous function.
\[
\psi:\partial D_{0,1}\rightarrow SO(m)
\]
such that $\psi(z)\widehat{e}=z$ for each $z\in\partial D_{0,1}$.
Let 
\[
\overline{SO}(m)\equiv\psi(\partial D_{0,1})
\]
denote the range of $\psi$. Then 
\[
\psi:\partial D_{0,1}\rightarrow\overline{SO}(m)
\]
is a continuous surjection.

9. In the other direction, define the function 
\[
\varphi:\overline{SO}(m)\rightarrow\partial D_{0,1}
\]
by $\varphi(\alpha)\equiv\alpha\widehat{e}\in\partial D_{0,1}$ for
each $\alpha\in\overline{SO}(m)$. Then $\varphi$ is continuous and
\[
\varphi\psi(z)\equiv\psi(z)\widehat{e}=z
\]
for each $z\in\partial D_{0,1}$. Thus $\varphi$ is the inverse of
the continuous function $\psi$. Hence 
\[
\psi:(\partial D_{0,1},d_{ecld})\rightarrow(\overline{SO}(m),d_{ecld})
\]
is a homeomorphism. Therefore we can regard each point $z\in\partial D_{0,1}$
as a rotation matrix $\psi(z)\in\overline{SO}(m)$, and write $z\cdot w$
for the matrix product $\psi(z)\cdot\psi(w)$ for each $z,w\in\partial D_{0,1}$

10. Since $(\overline{SO}(m),d_{ecld},\cdot)$ is a compact group,
the $(m-1)$-sphere $\partial D_{0,1}$becomes a compact group, with
the matrix multiplication as the group operations, and with $\widehat{e}\cdot w\equiv\psi(\widehat{e})\cdot w=I\cdot w=w$
for each $w\in\partial D_{0,1}$. Thus the reference point $\widehat{e}$
is the identity element in the compact group $\partial D_{0,1}$.

11. Now suppose $\sigma$ is an arbitrary distribution on $\partial D_{0,1}$
which is invariant relative to rotations. Then, for each $z\in\partial D_{0,1}$,
we have 
\[
\int_{w\in\partial D(0,1)}g(z\cdot w))\sigma(dw)=\int_{z\in\partial D(0,1)}g(w)\sigma(dw).
\]
Hence $\sigma$ is invariant relative to the left-multiplication by
an arbitrary group element $z\in\partial D_{0,1}$. 

12. In particular, since the uniform distribution $\overline{\sigma}_{m,0,1}$
is invariant relative to rotations, it is invariant relative to the
left-multiplication by an arbitrary group element $z\in\partial D_{0,1}$.
Hence theorem (1.19) in chapter 8 of \cite{BishopBridges85} says
that there exists a constant $c$ such that $\sigma=c\overline{\sigma}_{m,0,1}$.
Since $\sigma$ and $\overline{\sigma}_{m,y,r}$ are both distributions,
it follows that $1=\sigma(1)=c\overline{\sigma}_{m,0,1}(1)=c$. Therefore
$\sigma=\overline{\sigma}_{m,0,1}$, as alleged.
\end{proof}
\begin{cor}
\label{Cor. distribution on D(y,r) invariant relative to rotation is equal to uniform distribution-1}\textbf{\emph{
Each distribution on the $(m-1)$-sphere $\partial D_{y,r}$ that
is invariant relative to rotations is equal to the uniform distribution.
}}Let $y\in R^{m}$ and $r>0$ be arbitrary. Then the following conditions
hold.

1. Let $\sigma$ be an arbitrary distribution on $\partial D_{y,r}$
that is invariant relative to rotations about the center $y$. Then
$\sigma=\overline{\sigma}_{m,y,r}$. 

2. Suppose $y=0$. Define a distribution $\widetilde{\sigma}$ on
$\partial D_{0,r}$ by 
\[
\widetilde{\sigma}(f)\equiv\int_{z\in\partial D}\overline{\sigma}_{m,0,1}(dz)f(rz)
\]
for each $f\in C(\partial D_{0,r})$. Then $\widetilde{\sigma}=\overline{\sigma}_{m,0,r}$. 

3. A function $f$ on $\partial D_{0,r}$ is integrable relative to
$\overline{\sigma}_{m,0,r}$ iff $f(r\cdot)$ is integrable relative
to $\overline{\sigma}_{m,0,1}$, in which case 
\[
\int_{x\in\partial D(0,r)}\overline{\sigma}_{m,0,r}(dx)f(x)=\int_{z\in\partial D}\overline{\sigma}_{m,0,1}(dz)f(rz).
\]
\end{cor}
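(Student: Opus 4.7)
The plan is to derive all three assertions from Theorem \ref{Thm. distribution on D(0,1) invariant relative to rotation is equal to uniform distribution} by an elementary change of variables that reduces any sphere $\partial D_{y,r}$ to the unit sphere $\partial D_{0,1}$.

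For Assertion 1, I would introduce the affine bijection $\tau:\partial D_{0,1}\rightarrow\partial D_{y,r}$ defined by $\tau(z)\equiv y+rz$. Being the restriction of a scaling-translation, $\tau$ is a homeomorphism with respect to the inherited Euclidean metrics. Given the rotation-invariant distribution $\sigma$ on $\partial D_{y,r}$, I define a distribution $\sigma'$ on $\partial D_{0,1}$ by $\sigma'(g)\equiv\sigma(g\circ\tau^{-1})$ for each $g\in C(\partial D_{0,1})$. The key identity is the conjugation
\[
\tau\circ\rho_{\alpha,0}\circ\tau^{-1}=\rho_{\alpha,y}\quad\mathrm{on}\;\partial D_{y,r},
\]
which one verifies directly from Definition \ref{Def. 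Rotation matrix and rotation mapping} by observing that $\rho_{\alpha,y}(y+rz)=y+r\alpha z=\tau(\alpha z)=\tau(\rho_{\alpha,0}(z))$. Invariance of $\sigma$ under each $\rho_{\alpha,y}$ then translates into invariance of $\sigma'$ under each $\rho_{\alpha,0}$, so the theorem yields $\sigma'=\overline{\sigma}_{m,0,1}$. Pushing this equality forward through $\tau$ and using Assertion 1 of Proposition \ref{Prop:-Surface-area-integration on sphere centered at 0} together with the defining equality \ref{eq:temp-49} for $\sigma_{m,y,r}$ to identify $\tau_{*}\overline{\sigma}_{m,0,1}$ with $\overline{\sigma}_{m,y,r}$, one concludes $\sigma=\overline{\sigma}_{m,y,r}$.

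For Assertion 2, I would first observe that $\widetilde{\sigma}$ is a distribution on $\partial D_{0,r}$ in the sense of \cite{Chan21}: linearity and positivity are immediate from those of $\overline{\sigma}_{m,0,1}$, and $\widetilde{\sigma}(1)=\overline{\sigma}_{m,0,1}(1)=1$. To show rotation invariance about $0$, let $\alpha\in SO(m)$ and $f\in C(\partial D_{0,r})$ be arbitrary, and compute
\[
\widetilde{\sigma}(f\circ\rho_{\alpha,0})=\int\overline{\sigma}_{m,0,1}(dz)f(\alpha\cdot rz)=\int\overline{\sigma}_{m,0,1}(dz)f(rz)=\widetilde{\sigma}(f),
\]
where the middle equality uses the rotation invariance about $0$ of $\overline{\sigma}_{m,0,1}$ from Definition \ref{Def. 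Uniform distribttionn on (m-1)-sphere} applied to the function $z\mapsto f(rz)$. Assertion 1 with $y=0$ then gives $\widetilde{\sigma}=\overline{\sigma}_{m,0,r}$.

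For Assertion 3, the equality $\int\overline{\sigma}_{m,0,r}(dx)f(x)=\int\overline{\sigma}_{m,0,1}(dz)f(rz)$ has been established for each $f\in C(\partial D_{0,r})$ by Assertion 2, since both sides agree with $\widetilde{\sigma}(f)$. By the completion procedure for integration spaces in \cite{BishopBridges85,Chan21}, this identity on the generating family $C(\partial D_{0,r})$ extends to all integrable functions, and simultaneously transfers the integrability predicate from one side to the other. The only step that requires any thought is the conjugation identity $\tau\rho_{\alpha,0}\tau^{-1}=\rho_{\alpha,y}$ and the corresponding identification of $\overline{\sigma}_{m,y,r}$ with the pushforward of $\overline{\sigma}_{m,0,1}$; everything else is a purely formal application of the preceding theorem.
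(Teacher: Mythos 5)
Your proposal is correct and follows essentially the same route as the paper: transport the given distribution to $\partial D_{0,1}$ via $z\mapsto y+rz$, invoke Theorem \ref{Thm. distribution on D(0,1) invariant relative to rotation is equal to uniform distribution} there, and transport back, with Assertions 2 and 3 handled exactly as in the text (rotation invariance of $\widetilde{\sigma}$ checked pointwise, then $L_{1}$-continuity for the extension to integrable functions). The only cosmetic difference is in Assertion 1, where the paper applies the uniqueness theorem to both transported measures (so it never computes the pushforward of $\overline{\sigma}_{m,y,r}$ explicitly, only notes its rotation invariance), whereas you apply the theorem once and instead identify $\tau_{*}\overline{\sigma}_{m,0,1}$ with $\overline{\sigma}_{m,y,r}$ by the scaling and translation identities of Proposition \ref{Prop:-Surface-area-integration on sphere centered at 0}; both steps are routine and equivalent.
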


\begin{proof}
1. Let $\sigma$ be an arbitrary distribution on $\partial D_{y,r}$
that is invariant relative to rotations about the center $y$. Define
the distribution $\widetilde{\sigma}$ on $\partial D_{0,1}$ by $\widetilde{\sigma}(g)\equiv\sigma(g(\frac{\cdot-y}{r}))$
for each $g\in C(\partial D_{0,1})$. Then $\widetilde{\sigma}$ is
a distribution on $\partial D_{0,1}$ that is invariant relative to
rotations about $0$. Similarly, the distribution $\widetilde{\overline{\sigma}}_{m,y,r}$
is a distribution on $\partial D_{0,1}$ that is invariant relative
to rotations about $0$. By Theorem \ref{Thm. distribution on D(0,1) invariant relative to rotation is equal to uniform distribution},
we have $\widetilde{\sigma}=\widetilde{\overline{\sigma}}_{m,y,r}$.
Now let $h\in C(\partial D_{y,r})$ be arbitrary. Define $g\in C(\partial D_{0,1})$
by $g\equiv h(y+r\cdot)$. Then
\[
\sigma(h)=\sigma(g(\frac{\cdot-y}{r}))\equiv\widetilde{\sigma}(g)=\widetilde{\overline{\sigma}}_{m,y,r}(g)=\overline{\sigma}_{m,y,r}(g(\frac{\cdot-y}{r}))=\overline{\sigma}_{m,y,r}(h),
\]
where $h\in C(\partial D_{y,r})$ is arbitrary. Thus $\sigma\equiv\overline{\sigma}_{m,y,r}$
as distributions. Assertion 1 is proved.

2. Suppose $y=0$. Define a distribution $\widetilde{\sigma}$ on
$\partial D_{0,r}$ by 
\[
\widetilde{\sigma}(f)\equiv\int_{z\in\partial D}\overline{\sigma}_{m,0,1}(dz)f(rz)
\]
for each $f\in C(\partial D_{0,r})$. Then $\widetilde{\sigma}$ is
a distribution on the compact space $\partial D_{0,r}$. Let $\alpha$
be an arbitrary $m\times m$ rotation matrix. Let $f\in C(\partial D_{0,r})$
be arbitrary. Define $g\equiv f(r\cdot)\in C(\partial D_{0,1})$.
Then
\[
\widetilde{\sigma}(f\circ\alpha)=\int_{z\in\partial D}\overline{\sigma}_{m,0,1}(dz)g\circ\alpha(z)
\]
\[
\equiv\overline{\sigma}_{m,0,1}(g\circ\alpha)=\overline{\sigma}_{m,0,1}(g)=\overline{\sigma}_{m,0,1}(f(r\cdot))\equiv\widetilde{\sigma}(f).
\]
where the third inequality is by Definition \ref{Def. Uniform distribttionn on (m-1)-sphere}.
Hence $\widetilde{\sigma}=\overline{\sigma}_{m,0,r}$ by Assertion
1. This proves Assertion 2.

3. By Assertion 2 
\begin{equation}
\int_{x\in\partial D(0,r)}\overline{\sigma}_{m,0,r}(dx)f(x)\equiv\int_{z\in\partial D}\overline{\sigma}_{m,0,1}(dz)f(rz)\label{eq:temp-56}
\end{equation}
for each $f\in C(\partial D_{0,r})$. By $L_{1}$ continuity, equality
\ref{eq:temp-56} can be extended to each function $f$ on $\partial D_{0,r}$
that is integrable relative to $\overline{\sigma}_{m,0,r}$. Conversely,
equality \ref{eq:temp-56} can be extended to each function $f$ on
$\partial D_{0,r}$ such that the function $f(r\cdot)$ is integrable
relative to $\overline{\sigma}_{m,0,1}$. Assertion 3 is proved.
\end{proof}

\subsection{Harmonic function}
\begin{defn}
\textbf{\label{Def. Harmoncic function} Harmonic function.} Let $A$
be an arbitrary open subset of $R^{m}$. A function $u\equiv u(x_{1},\cdots,x_{m})$
that has a second derivative which is uniformly continuous on compact
subsets of $A$ is said to be \emph{harmonic} if it satisfies the
Laplace equation
\[
\triangle u\equiv\sum_{i=1}^{m}\frac{\partial^{2}u}{\partial x_{i}^{2}}=0
\]
on $A$.
\end{defn}

$\square$
\begin{defn}
\textbf{\label{Def. Mean value property }Mean value property}. Let
$A$ be an arbitrary open subset of $R^{m}$. Let $u$ be a function
with $domain(u)=A$ that is uniformly continuous on compact subsets
of \emph{$A$. }Then the function $u$ is said to have the mean value
property if one of the following two conditions hold.

(i) (Mean value property on balls). For each $y\in R^{m}$ and $r>0$
such that $\overline{D}_{y,r}\subset A$, we have

\begin{equation}
u(y)=\nu_{m,r}^{-1}\int\cdots\int_{x\in D(y,r)}u(x)dx.\label{eq:temp-3-2}
\end{equation}

(ii) (Mean value property on spheres). For each $y\in R^{m}$ and
$r>0$ such that $\overline{D}_{y,r}\subset A$, we have

\begin{equation}
u(y)=\sigma_{m,r}^{-1}\int_{z\in\partial D(y,r)}u(z)\sigma_{m,y,r}(dz).\label{eq:temp-5}
\end{equation}
The next proposition says that these two conditions are equivalent.
$\square$
\end{defn}

\begin{prop}
\label{Prop. mean value property on balls is equivalent to mean value property on spheres.}\textbf{\emph{
Mean value property on balls is equivalent to mean value property
on spheres.}} Let $A$ be an arbitrary open subset of $R^{m}$.Let
$u$ be a function with $domain(u)=A$ that is uniformly continuous
on compact subsets of \emph{$A$. Then Conditions (i) and (ii) in
Definition \ref{Def. Mean value property } are equivalent.}
\end{prop}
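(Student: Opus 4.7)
The plan is to prove each implication by relating the spherical mean to the volume mean through the shell decomposition developed in the proof of Proposition~\ref{Prop:-Surface-area-integration on sphere centered at 0}, combined with the volume--surface identity $\sigma_{m,t} = m\nu_{m,1} t^{m-1}$.

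For the direction (ii) $\Rightarrow$ (i), I would fix $y \in R^{m}$ and $r > 0$ with $\overline{D}_{y,r} \subset A$. For each $t \in (0, r]$ we have $\overline{D}_{y,t} \subset A$, so (ii) gives
\[
\int_{\partial D(y,t)} u(z)\, \sigma_{m,y,t}(dz) = \sigma_{m,t}\, u(y) = m\nu_{m,1} t^{m-1} u(y).
\]
I would then recover the shell integration formula
\[
\int_{D(y,r)} u(x)\, dx = \int_{0}^{r} \int_{\partial D(y,t)} u(z)\, \sigma_{m,y,t}(dz)\, dt
\]
via the change of variables $x = y + (t\theta_{1}, \ldots, t\theta_{m-1}, \pm t\sqrt{1 - \theta_{1}^{2} - \cdots - \theta_{m-1}^{2}})$ already carried out in the proof of Assertion~3 of Proposition~\ref{Prop:-Surface-area-integration on sphere centered at 0}, whose Jacobian $t^{m-1}(1 - \theta_{1}^{2} - \cdots - \theta_{m-1}^{2})^{-1/2}$ matches the defining equation~(\ref{eq:temp-51}) for $\sigma_{m,0,t}$ after the translation $x \mapsto x - y$. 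Substituting the first identity into the second and integrating in $t$ yields $\int_{D(y,r)} u(x)\, dx = u(y) \int_{0}^{r} m\nu_{m,1} t^{m-1}\, dt = \nu_{m,r}\, u(y)$, which is~(\ref{eq:temp-3-2}).

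For the direction (i) $\Rightarrow$ (ii), fix $y$ and $r$ as above. Applying~(\ref{eq:temp-3-2}) at the radii $s \in (0, r)$ and $r$ and subtracting gives
\[
\nu_{m,s,r}\, u(y) = \int_{D(y,r)} u(x)\, dx - \int_{D(y,s)} u(x)\, dx = \int_{D(y,s,r)} u(x)\, dx.
\]
Translating equation~(\ref{eq:temp-12}) from the origin to $y$ via the definition $\sigma_{m,y,r}(g) \equiv \sigma_{m,0,r}(g(\cdot - y))$ and the substitution $x \mapsto x - y$, I obtain
\[
\sigma_{m,r}^{-1} \int_{\partial D(y,r)} u(z)\, \sigma_{m,y,r}(dz) = \lim_{s \uparrow r} \nu_{m,s,r}^{-1} \int_{D(y,s,r)} u\!\left( y + \frac{r(x-y)}{\|x-y\|} \right) dx.
\]
Since each $x \in D(y, s, r)$ lies within Euclidean distance $r - s$ of its radial projection $y + r(x-y)/\|x-y\| \in \partial D(y,r)$, and $u$ is uniformly continuous on the compact set $\overline{D}_{y,r}$, the integrand on the right-hand side differs from $u(x)$ by at most the uniform-continuity modulus of $u$ evaluated at $r - s$. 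Combined with the preceding display, this shows the limit equals $u(y)$, which is~(\ref{eq:temp-5}).

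The principal obstacle is the constructive execution of the $s \uparrow r$ limit in the second direction: one must extract an explicit rate of convergence from the modulus of uniform continuity of $u$ on $\overline{D}_{y,r}$, rather than invoke a classical limit. With that rate in hand, both implications reduce to bookkeeping on the Jacobian and defining equation~(\ref{eq:temp-51}) already made available by Proposition~\ref{Prop:-Surface-area-integration on sphere centered at 0}.
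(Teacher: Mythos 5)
Your direction (i) $\Rightarrow$ (ii) is essentially the paper's argument: subtract the volume means at radii $s$ and $r$, replace $u$ on the thin shell by its radial projection using the modulus of continuity (with error controlled by $r-s$), and invoke the limit formula \ref{eq:temp-12} translated to the center $y$. Your direction (ii) $\Rightarrow$ (i), however, takes a genuinely different route. You integrate the spherical identity $\sigma_{m,y,t}(u)=m\nu_{m,1}t^{m-1}u(y)$ over $t\in(0,r)$ via a polar-coordinates Fubini (co-area) formula $\int_{D(y,r)}u\,dx=\int_{0}^{r}\sigma_{m,y,t}(u)\,dt$. The paper instead avoids any such formula: it partitions the shell $D(0,s,r)$ into finitely many thin subshells $D(0,r_{k-1},r_k)$, approximates $u(y+x)$ on each by $u(y+r_k x/\left\Vert x\right\Vert )$ within $\varepsilon$, applies the already-proved identity \ref{eq:temp-12} on each subshell to convert to $\sigma_{m,0,r(k)}(u(y+\cdot))=\sigma_{m,r(k)}u(y)$, sums, and then lets $s\downarrow0$. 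The payoff of the paper's route is that it uses only what Proposition \ref{Prop:-Surface-area-integration on sphere centered at 0} literally provides, namely the limit statement \ref{eq:temp-12} for integrands of the radial-projection form $g(rx/\left\Vert x\right\Vert )$, together with an explicit $\varepsilon$-bookkeeping via the modulus $\delta_{u}$; this is why it goes through constructively with no further measure-theoretic input. Your route is shorter and more transparent, but the co-area identity is not actually established anywhere in the paper: the Jacobian computation in the proof of Assertion 3 is only ever applied to integrands constant along rays, so you would need to rerun that change of variables for the general integrand $u$ and justify the resulting iterated integral (continuity of $t\mapsto\sigma_{m,y,t}(u)$ suffices, and follows from uniform continuity of $u$ on $\overline{D}_{y,r}$, but it must be said). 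With that supplement both implications are sound, and your identification of the $s\uparrow r$ limit as the point requiring an explicit rate is exactly where the paper spends its effort.
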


\begin{proof}
1. Suppose Condition (i) holds for the function $u$. Let $y\in R^{m}$
and $r>s>0$ be arbitrary such that $\overline{D}_{y,r}\subset A$.
Then 
\begin{equation}
\nu_{m,r}\cdot u(y)=\int\cdots\int_{x\in D(y,r)}u(x)dx,\label{eq:temp-13}
\end{equation}
with a similar equality where $r$ is replaced by $s$. Hence
\[
u(y)=\lim_{s\uparrow r}\nu_{m,s,r}^{-1}(\nu_{m,r}-\nu_{m,s})\cdot u(y)
\]
\[
=\lim_{s\uparrow r}\nu_{m,s,r}^{-1}(\int\cdots\int_{x\in D(y,r)}u(x)dx-\int\cdots\int_{x\in D(y,s)}u(x)dx)
\]
\[
=\lim_{s\uparrow r}\nu_{m,s,r}^{-1}\int\cdots\int_{x\in D(y,s,r)}u(x)dx
\]
\[
=\lim_{s\uparrow r}\nu_{m,s,r}^{-1}\int\cdots\int_{x\in D(0,s,r)}u(x+y)dx
\]
\[
=\lim_{s\uparrow r}\nu_{m,s,r}^{-1}\int\cdots\int_{x\in D(0,s,r)}u(\frac{rx}{\left\Vert x\right\Vert }+y)dx
\]
\[
=\lim_{s\uparrow r}\sigma_{m,r}^{-1}\int_{z\in D(0,r)}\sigma_{m,0,r}(dz)u(z+y)
\]
\[
\equiv\sigma_{m,r}^{-1}\int_{z\in D(0,r)}\sigma_{m,0,r}(dz)u(z+y)
\]
\[
=\sigma_{m,r}^{-1}\int_{x\in D(y,r)}\sigma_{m,y,r}(dx)u(x),
\]
where the second equality is due to equality \ref{eq:temp-13}, where
the fifth equality is because $\left\Vert x\right\Vert \rightarrow r$
for each $x\in D(0,s,r)$ as $s\uparrow r$ and because $u$ is uniformly
continuous on $x\in D(0,s,r),$ and where the sixth equality is by
applying equality \ref{eq:temp-12} of Proposition \ref{Prop:-Surface-area-integration on sphere centered at 0}
to the function $g\equiv u(\cdot+y)$.

Thus Condition (ii) holds. We conclude that Condition (i) implies
Condition (ii).

2. Conversely, suppose Condition (ii) in Definition \ref{Def. Mean value property }
holds. Let $y\in R^{m}$ and $r>s>0$ be arbitrary such that $\overline{D}_{y,r}\subset A$.
Let $\varepsilon>0$ be arbitrary. Take $K\geq1$ so large that $rs^{-1}K^{-1}(r-s)<\delta_{u}(\varepsilon)$
where $\delta_{u}$ is a modulus of continuity of the function $u$
on $\overline{D}_{y,r}$. Let $r_{k}\equiv s+kK^{-1}(r-s)$ for each
$k=0,\cdots,K$. Then
\[
\nu_{m,s,r}^{-1}\int\cdots\int_{x\in D(0,s,r)}u(y+x)dx=\nu_{m,s,r}^{-1}\sum_{k=1}^{K}\int\cdots\int_{x\in D(0,r(k-1),r(k))}u(y+x)dx.
\]
At the same time, for each $k=1,\cdots,K$ and $x\subset D_{0,r(k-1),r(k)}$,
we have
\[
\left\Vert (y+x)-(y+r_{k}\frac{x}{\left\Vert x\right\Vert })\right\Vert \leq\left\Vert x\right\Vert \cdot|1-\frac{r_{k}}{r_{k-1}}|
\]
\[
\leq r\frac{r_{k}-r_{k-1}}{r_{k-1}}\leq r\frac{K^{-1}(r-s)}{s}<\delta_{u}(\varepsilon).
\]
Hence 
\[
\nu_{m,s,r}^{-1}\int\cdots\int_{x\in D(0,s,r)}u(y+x)dx
\]
\[
=\nu_{m,s,r}^{-1}\sum_{k=1}^{K}\int\cdots\int_{x\in D(0,r(k-1),r(k))}u(y+x)dx
\]
\[
=\nu_{m,s,r}^{-1}\sum_{k=1}^{K}\int\cdots\int_{x\in D(0,r(k-1),r(k))}(u(y+r_{k}\frac{x}{\left\Vert x\right\Vert })\pm\varepsilon)dx
\]
\begin{equation}
=\nu_{m,s,r}^{-1}\sum_{k=1}^{K}\int\cdots\int_{x\in D(0,r(k-1),r(k))}u(y+r_{k}\frac{x}{\left\Vert x\right\Vert })dx\pm\varepsilon.\label{eq:temp-50}
\end{equation}
By applying equality \ref{eq:temp-12} of Proposition \ref{Prop:-Surface-area-integration on sphere centered at 0}
with $u(y+\cdot)$ in the place of the function $g$ and with $r_{k-1}$
and $r_{k}$ in the places of $s$ and $r$ respectively, we obtain
\[
\int\cdots\int_{x\in D(0,r(k-1),r(k)))}u(y+\frac{r_{k}x}{\left\Vert x\right\Vert })dx
\]
\begin{equation}
=\nu_{m,r(k-1),r(k)}\sigma_{m,r(k)}^{-1}\sigma_{m,0,r(k)}(u(y+\cdot)).\label{eq:temp-12-2}
\end{equation}
Therefore equality \ref{eq:temp-50} reduces to
\[
\nu_{m,s,r}^{-1}\int\cdots\int_{x\in D(0,s,r)}u(y+x)dx
\]
\[
=\nu_{m,s,r}^{-1}\sum_{k=1}^{K}\nu_{m,r(k-1),r(k)}\sigma_{m,r(k)}^{-1}\sigma_{m,0,r(k)}(u(y+\cdot))\pm\varepsilon
\]
\[
=\nu_{m,s,r}^{-1}\sum_{k=1}^{K}\nu_{m,r(k-1),r(k)}\sigma_{m,r(k)}^{-1}\sigma_{m,y,r(k)}(u)\pm\varepsilon
\]
\begin{equation}
=\nu_{m,s,r}^{-1}\sum_{k=1}^{K}\nu_{m,r(k-1),r(k)}u(y)\pm\varepsilon=u(y)\pm\varepsilon,\label{eq:temp-10}
\end{equation}
where the second to last equality is thanks to the assumed Condition
(ii) in Definition \ref{Def. Mean value property }. Since $\varepsilon>0$
is arbitrarily small, we infer that 
\[
\nu_{m,s,r}^{-1}\int\cdots\int_{x\in D(0,s,r)}u(y+x)dx=u(y).
\]
Letting $s\downarrow0$, we obtain
\[
u(y)=\nu_{m,0,r}^{-1}\int\cdots\int_{x\in D(0,,r)}u(y+x)dx=\nu_{m,r}^{-1}\int_{x\in D(y,r)}u(x)dx,
\]
which is Condition (i) in Definition \ref{Def. Mean value property }.
Thus Condition (i) follows from Condition (ii).
\end{proof}
\begin{prop}
\emph{\label{Prop. Harmonic functions have mean value property}}\textbf{\emph{
Harmonic functions}}\textbf{ }\textbf{\emph{have mean value property}}\textbf{.
}Let $A$ be an arbitrary open subset of $R^{m}$. Suppose a function
$u\equiv u(x)\equiv u(x_{1},\cdots,x_{m})$ is harmonic on $A$. Suppose
a ball $\overline{D}_{y,r}\subset A$ for some $y\in R^{m}$ and $r>0$
. Then 

\begin{equation}
u(y)=\nu_{m,r}^{-1}\int_{x\in D(y,r)}u(x)dx.\label{eq:temp-3}
\end{equation}
Moreover,
\begin{equation}
u(y)=\sigma_{m,r}^{-1}\sigma_{m,y,r}(u)=\int_{z\in\partial D(y,r)}u(z)\overline{\sigma}_{m,y,r}(dz).\label{eq:temp-4}
\end{equation}
\end{prop}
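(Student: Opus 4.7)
The plan is to establish the spherical mean value equality \ref{eq:temp-4} first, and then obtain the ball equality \ref{eq:temp-3} automatically from Proposition \ref{Prop. mean value property on balls is equivalent to mean value property on spheres.}. The workhorse is the classical radial-average argument: I define
\[
\phi(\rho) \equiv \int_{z \in \partial D_{0,1}} u(y + \rho z)\,\overline{\sigma}_{m,0,1}(dz)
\]
for $\rho \in [0,r]$, verify $\phi(0) = u(y)$, and show $\phi'(\rho) = 0$ on $(0,r]$, so that $\phi$ is constantly equal to $u(y)$; then I convert $\phi(r)$ into the desired surface average over $\partial D_{y,r}$.

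First, since $u$ is uniformly continuous on the compact ball $\overline{D}_{y,r}$ and $\overline{\sigma}_{m,0,1}$ is a probability distribution, $\phi$ is continuous on $[0,r]$ and $\phi(0) = u(y)$ by dominated convergence (or, more directly, since $u(y + \rho z) \to u(y)$ uniformly in $z \in \partial D_{0,1}$ as $\rho \downarrow 0$). Next, because $u$ has a uniformly continuous second derivative on a slightly larger closed ball $\overline{D}_{y,r+\epsilon} \subset A$ (obtained because $\overline{D}_{y,r}$ is compact and $A$ is open), the gradient $\nabla u$ is uniformly continuous on $\overline{D}_{y,r}$, which justifies differentiation under the integral sign:
\[
\phi'(\rho) = \int_{z \in \partial D_{0,1}} \nabla u(y + \rho z) \cdot z\,\overline{\sigma}_{m,0,1}(dz).
\]
Using Assertion 3 of Corollary \ref{Cor. distribution on D(y,r) invariant relative to rotation is equal to uniform distribution-1} (translated to center $y$), the right-hand side equals $\sigma_{m,\rho}^{-1}\int_{\partial D(y,\rho)} (\nabla u \cdot n)\,\sigma_{m,y,\rho}(dx)$, where $n \equiv (x-y)/\rho$ is the outward unit normal on $\partial D_{y,\rho}$.

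The key step is then to invoke the divergence theorem on the ball $D_{y,\rho}$, which converts the flux integral into the volume integral of $\Delta u$:
\[
\int_{\partial D(y,\rho)} (\nabla u \cdot n)\,\sigma_{m,y,\rho}(dx) = \int_{D(y,\rho)} \Delta u(x)\,dx = 0,
\]
the last equality by harmonicity of $u$. Hence $\phi'(\rho) = 0$ for every $\rho \in (0,r]$, so $\phi$ is constant on $[0,r]$ and $\phi(r) = \phi(0) = u(y)$. Applying Assertion 3 of Corollary \ref{Cor. distribution on D(y,r) invariant relative to rotation is equal to uniform distribution-1} once more (to rewrite $\phi(r)$ as an integral over $\partial D_{y,r}$) gives equality \ref{eq:temp-4}. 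Equality \ref{eq:temp-3} then follows by Proposition \ref{Prop. mean value property on balls is equivalent to mean value property on spheres.}.

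The main obstacle I anticipate is giving a fully constructive justification of the divergence theorem in the form used above. Because the geometry here is only a ball and the vector field is $\nabla u$ with uniformly continuous first partials on a neighborhood of $\overline{D}_{y,r}$, one expects a constructive version is available by integrating component by component and applying the one-dimensional fundamental theorem of calculus along radial or coordinate slices. Differentiation under the integral sign defining $\phi$ is less delicate: it follows directly from uniform continuity of $\nabla u$ on $\overline{D}_{y,r}$, together with standard constructive estimates for difference quotients. All other steps, including the change-of-variables between $\partial D_{0,1}$ and $\partial D_{y,\rho}$, are supplied by the propositions and corollary already proven in the paper.
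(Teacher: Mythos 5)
Your proof is correct, but it is genuinely different from what the paper does: the paper's entire ``proof'' of this proposition is a citation of theorems 1.5 and 1.6 of Helms, whereas you give a self-contained derivation. Your radial-average argument (constancy of $\phi(\rho)\equiv\int_{\partial D_{0,1}}u(y+\rho z)\,\overline{\sigma}_{m,0,1}(dz)$ via differentiation under the integral sign plus the divergence theorem) is exactly the computation the paper itself performs later, in the proof of Proposition \ref{Prop. mean value property implies harmonic}, only run in the opposite direction there (from constancy of the spherical average to $\triangle u=0$). The ``main obstacle'' you anticipate --- a constructive divergence theorem for a ball --- is a non-issue within this paper: Theorem \ref{Thm. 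Divergence Theorem} supplies precisely the statement you need, for a vector field with continuous derivatives on each closed ball in $A$, and $\mathbf{F}\equiv\nabla u$ qualifies because harmonicity (Definition \ref{Def. Harmoncic function}) already packages uniformly continuous second derivatives on compact subsets of $A$. Two small remarks: (a) you do not need the enlarged ball $\overline{D}_{y,r+\epsilon}$ (whose constructive existence would itself require an argument about the positive distance from a compact set to the metric complement of $A$); uniform continuity of $\nabla u$ on $\overline{D}_{y,r}$ itself suffices for the difference-quotient estimate. (b) Theorem \ref{Thm. Divergence Theorem} is stated after this proposition in the paper's ordering, so using it here is a forward reference; it is harmless (its proof is independent of the mean value property), but in a final write-up the statements should be reordered. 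What your approach buys is a proof internal to the paper's constructive framework rather than an appeal to a classical reference; what the paper's citation buys is brevity at the cost of leaving the constructive content of this step unexamined.
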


\begin{proof}
See theorems 1.5 and 1.6 of \cite{Helms69}.
\end{proof}
\begin{lem}
\emph{\label{Lem. C^k function smoothed by convolution with indicator of ball is C^(k+1)}
}\textbf{\emph{$C^{(k)}$-function smoothed by convolution with indicator
of a ball is $C^{(k+1)}$}}. Let $u$ be an arbitrary bounded continuous
function on the open set $A\subset R^{m}$. Let $r>0$ be arbitrary.
Define the function $u_{r}$ on the subset
\[
A_{r}\equiv\{y\in A:\overline{D}_{y,r}\subset A\}
\]
by
\begin{equation}
u_{r}(y)\equiv\nu_{m,r}^{-1}\int_{\overline{D}(y,r)}u(x)dx\label{eq:temp-5-1}
\end{equation}
for each $y\in A_{r}$. Then the following conditions hold.

1. $u_{r}$ has continuous first derivative on $A_{r}$. 

2. Suppose, in addition, the function $u$ has continuous $k$-th
derivative on $A.$ for some $k\geq1$. Then the function $u_{r}$
has continuous ($k+1)$-st derivative on $A_{r}$.

3. Suppose $u$ has the mean value property. Then $u$ has continuous
($k+1)$-st derivative on $A_{r}$ for each $k\geq1$.
\end{lem}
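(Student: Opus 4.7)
For part 1, the key idea is to convert $\partial_i u_r$ into a surface integral whose existence requires only continuity of $u$. Writing $u_r(y) = \nu_{m,r}^{-1}\int_{\overline{D}(0,r)} u(y+z)\,dz$ by translation, the difference quotient satisfies
\[
u_r(y+he_i) - u_r(y) = \nu_{m,r}^{-1}\left(\int_{\overline{D}(y+he_i,r)} u(x)\,dx - \int_{\overline{D}(y,r)} u(x)\,dx\right),
\]
which is the integral of $u$ over the (signed) symmetric difference of the two balls. Switching to polar coordinates $x = y + t\theta$ centered at $y$, an elementary computation shows that for directions $\theta \in \partial D_{0,1}$ with $\theta_i = \theta\cdot e_i > 0$ the radial slice of $\overline{D}(y+he_i,r)\setminus\overline{D}(y,r)$ is the interval of $t$-values from $r$ up to $r + h\theta_i + O(h^2)$, and symmetrically for $\theta_i<0$ the radial slice of $\overline{D}(y,r)\setminus\overline{D}(y+he_i,r)$ has length $h|\theta_i|+O(h^2)$. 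Dividing by $h$, letting $h\to 0$, and using uniform continuity of $u$ on a compact neighborhood of $\overline{D}(y,r)$ to control the error, one obtains the formula
\[
\partial_i u_r(y) = \nu_{m,r}^{-1}\int_{z\in\partial D(y,r)} u(z)\,\frac{z_i-y_i}{r}\,\sigma_{m,y,r}(dz),
\]
whose continuity in $y$ follows from the continuity of $u$ and the continuous parametrization of $\partial D_{y,r}$ by $\partial D_{0,1}$.

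For part 2, I would argue by induction on $k\geq 1$, using the commutativity identity $\partial_i u_r = (\partial_i u)_r$, valid whenever $u$ is $C^1$. This follows by writing
\[
u_r(y+he_i) - u_r(y) = \nu_{m,r}^{-1}\int_{\overline{D}(0,r)}\int_0^h \partial_i u(y+s e_i+z)\,ds\,dz
\]
via the fundamental theorem of calculus, then swapping the order of integration by Fubini, dividing by $h$, and letting $h\to 0$; uniform continuity of $\partial_i u$ on the compact set $y+\overline{D}(0,r)\subset A$ justifies passage to the limit. The inductive step is then immediate: if $u$ is $C^k$ on $A$, each $\partial_i u$ is $C^{k-1}$ on $A$, so by the induction hypothesis $(\partial_i u)_r = \partial_i u_r$ is $C^k$ on $A_r$, whence $u_r$ is $C^{k+1}$ on $A_r$. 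The base case $k=1$ uses part 1 applied to each $\partial_i u$.

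For part 3, I would bootstrap from the mean value property. By Proposition \ref{Prop. mean value property on balls is equivalent to mean value property on spheres.}, if $u$ has the mean value property then $u(y) = u_s(y)$ on $A_s$ for every $s>0$ with $A_s$ nonempty. I would then prove by induction on $k\geq 0$ the strengthened statement that for every $\epsilon>0$ with $A_\epsilon$ nonempty, $u$ is $C^k$ on $A_\epsilon$. The base case $k=0$ is continuity. For the inductive step, given $\epsilon>0$, decompose $\epsilon = s+\epsilon'$ with $s,\epsilon'>0$; by the induction hypothesis, $u$ is $C^{k-1}$ on the open set $A_{\epsilon'}$, and a triangle-inequality verification shows $\{y : \overline{D}(y,s)\subset A_{\epsilon'}\} = A_{s+\epsilon'} = A_\epsilon$. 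Applying part 2 to the restriction $u|_{A_{\epsilon'}}$ viewed as a $C^{k-1}$ function on the ambient open set $A_{\epsilon'}$ with radius $s$, we conclude that $u_s$ is $C^k$ on $A_\epsilon$. The mean value property gives $u = u_s$ on $A_s\supset A_\epsilon$, so $u$ is $C^k$ on $A_\epsilon$; specializing $\epsilon = r$ yields the lemma's claim.

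The main obstacle I expect is the rigorous justification of the derivative formula in part 1 under only continuity of $u$: constructively, the symmetric-difference crescent analysis requires an explicit modulus of continuity of $u$ to quantify the $O(h^2)$ error uniformly over directions and over $y$ in compact subsets of $A_r$. Once that base case is in hand, parts 2 and 3 follow by essentially routine induction arguments, using commutation of mollification with differentiation and the mean value property respectively.
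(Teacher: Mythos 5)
Your proposal is correct, and for Assertions 1 and 2 it supplies a genuine argument where the paper simply cites theorem 1.14 of Helms: your crescent/polar-coordinate derivation of the boundary-flux formula
\(\partial_i u_r(y)=\nu_{m,r}^{-1}\int_{z\in\partial D(y,r)}u(z)\,r^{-1}(z_i-y_i)\,\sigma_{m,y,r}(dz)\)
and the commutation identity \(\partial_i u_r=(\partial_i u)_r\) for \(C^1\) functions are exactly the classical route, and you correctly flag that the constructive content lies in using an explicit modulus of continuity of \(u\) to control the \(O(h^2)\) error uniformly. For Assertion 3 your argument is more careful than the paper's. The paper's recursion says: \(u=u_r\) is \(C^1\) on \(A_r\) by Assertion 1, then apply Assertion 2 to get \(C^2\), and so on. But Assertion 2 requires \(u\) to be \(C^k\) on the ambient open set on which the mollification is taken, whereas the first step only yields smoothness on \(A_r\); the paper glosses over this. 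Your decomposition \(\epsilon=s+\epsilon'\), applying Assertion 2 with radius \(s\) to \(u\) restricted to \(A_{\epsilon'}\) and using \(u=u_s\) there, is the right repair (one can equivalently note that the mean value property holds for all radii, so \(u\) is \(C^k\) on all of \(A=\bigcup_{s>0}A_s\) at each stage of the recursion). The only loose end is that \(A_{\epsilon'}\) as defined need not be open, so you should either take its interior or shrink \(\epsilon'\) slightly before invoking Assertion 2; this is cosmetic and does not affect the argument.
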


\begin{proof}
1. For the proof of Assertions 1 and 2, see theorem 1.14 on page 19
of \cite{Helms69}

2. To prove Assertion 3, suppose $u$ has the mean value property.
In other words, suppose $u=u_{r}$ for each $y\in A_{r}$. Then, Assertion
1 implies that $u$ has continuous first derivative on $A_{r}$. Assertion
2 then implies that $u=u_{r}$ has continuous second derivative on
$A_{r}$. Recursively applying Assertion 2, we see that $u=u_{r}$
has continuous ($k+1)$-st derivative on $A_{r}$ for each $k\geq1$.
\end{proof}
\begin{thm}
\emph{\label{Thm. Divergence Theorem}}\textbf{\emph{ Divergence Theorem.}}
Let $A$ be an arbitrary open subset of $R^{m}$. Let $\mathbf{F}\equiv(F_{1},\cdots,F_{m}):A\rightarrow R^{m}$
be a vector field with continuous derivatives on each closed ball
contained in $A$. Then for each $y\in R^{m}$ and $r>0$ with $\overline{D}_{y,r}\subset A$,
we have defined\textbf{\emph{
\[
\int_{x\in D(y,r)}(\nabla\cdot\mathbf{F)}(x)dx=\int_{x\in\partial D(y,r)}(\mathbf{F}\cdot\mathbf{n})(x)\sigma_{m,y,r}(dx),
\]
}}\emph{where, for each $x\equiv(x_{1},\cdots,x_{m})\in\partial D_{y,r}$,
we have defined (i) $\mathbf{n}(x)\equiv(n_{1},\cdots,n_{m})(x)\equiv r^{-1}(x-y)$
is the outward-pointing unit normal vector at $x$, (ii) ($\nabla\cdot\mathbf{F)}(x)\equiv(\frac{\partial F_{1}}{\partial x_{1}}+\cdots+\frac{\partial F_{m}}{\partial x_{m}})(x)$,
and (iii) $(\mathbf{F}\cdot\mathbf{n})(x)\equiv(F_{1}n_{1}+\cdots+F_{m}n_{m})(x)$.}
\end{thm}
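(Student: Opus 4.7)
The plan is to reduce the theorem to a single-component, single-coordinate identity and then compute both sides explicitly, using Fubini's theorem and the fundamental theorem of calculus on one side and the explicit parametrization (\ref{eq:temp-51}) of $\sigma_{m,0,r}$ from Proposition \ref{Prop:-Surface-area-integration on sphere centered at 0} on the other. First, by translation invariance of Lebesgue measure together with the defining equality (\ref{eq:temp-49}), I would reduce to the case $y = 0$ by replacing $\mathbf{F}$ with $\mathbf{F}(\cdot + y)$. Then, by linearity of both integrals in $\mathbf{F}$, it suffices to handle a vector field of the form $\mathbf{F} = f\widehat{e}_i$ for a scalar function $f$ with continuous first derivatives on $\overline{D}_{0,r}$ and some index $i$; the argument I describe below for $i = m$ adapts verbatim to any $i$ by slicing the ball with hyperplanes orthogonal to the $i$-th axis, so I focus on $i = m$. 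The identity to establish then becomes
\[
\int_{D(0,r)} \frac{\partial f}{\partial x_m}(x)\,dx = \int_{\partial D(0,r)} f(z) \frac{z_m}{r}\,\sigma_{m,0,r}(dz).
\]

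For the left-hand side, I would apply Fubini's theorem to slice $D(0,r)$ along the $x_m$-axis, writing
\[
\int_{D(0,r)} \frac{\partial f}{\partial x_m}(x)\,dx = \int_{x' \in D^{m-1}(0,r)} dx' \int_{-\sqrt{r^2 - \|x'\|^2}}^{+\sqrt{r^2 - \|x'\|^2}} \frac{\partial f}{\partial x_m}(x', x_m)\,dx_m,
\]
and then use the fundamental theorem of calculus to collapse the inner integral to $f(x', +\sqrt{r^2 - \|x'\|^2}) - f(x', -\sqrt{r^2 - \|x'\|^2})$. For the right-hand side, I would apply (\ref{eq:temp-51}) to $g(z) \equiv f(z) z_m / r$; on the upper hemisphere the factor $z_m/r = +\sqrt{1 - \|\theta\|^2}$ exactly cancels the weight $1/\sqrt{1 - \|\theta\|^2}$ in (\ref{eq:temp-51}), and on the lower hemisphere the same cancellation occurs with the opposite sign. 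After the substitution $x' = r\theta'$ with Jacobian $r^{m-1}$, the two expressions agree term by term.

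The main obstacle I anticipate is the constructive justification of Fubini's theorem on the ball $D(0,r)$, whose $x_m$-cross-sections have length $2\sqrt{r^2 - \|x'\|^2}$ which shrinks to zero as $\|x'\| \to r$. I plan to handle this by approximating the relevant integrand in $L_1$ by continuous functions supported on shrunken product regions $D^{m-1}(0, r - \delta) \times [-\sqrt{r^2 - (r-\delta)^2},\,+\sqrt{r^2 - (r-\delta)^2}]$, invoking the standard constructive Fubini from \cite{BishopBridges85,Chan21} on each such product and letting $\delta \downarrow 0$; the uniform continuity of $\partial f/\partial x_m$ on $\overline{D}_{0,r}$ ensures the error from the missing annular shell tends to $0$. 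The crucial analytic feature of the proof, namely the cancellation of the square-root singularity in (\ref{eq:temp-51}) by the $m$-th component of the outward unit normal, is already encoded in the definitions, so no delicate estimate near the equator is required in the surface-integral computation.
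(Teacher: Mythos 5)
Your proof is essentially correct, but it takes a genuinely different route from the paper: the paper does not prove the theorem at all, instead deferring to theorem 10.51 of \cite{Rudin76} (the divergence theorem obtained from Stokes' theorem for differential forms) and asking the reader to check that Rudin's restriction to $m=3$ is inessential. Your argument is self-contained and more elementary: it uses only the explicit defining parametrization \ref{eq:temp-51} of $\sigma_{m,0,r}$, Fubini, and the fundamental theorem of calculus, and the key cancellation you identify is exactly right --- for $g(z)\equiv f(z)z_{m}/r$ the factor $z_{m}/r=\pm\sqrt{1-\theta_{1}^{2}-\cdots-\theta_{m-1}^{2}}$ kills the weight $1/\sqrt{1-\theta_{1}^{2}-\cdots-\theta_{m-1}^{2}}$ in \ref{eq:temp-51}, and after the substitution $x'=r\theta$ the two sides agree. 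This is arguably better suited to the constructive aims of the paper than an appeal to Rudin, and your $\delta$-shrinkage device for the Fubini/FTC step near the equator is a standard and adequate constructive workaround, since $\partial f/\partial x_{m}$ is uniformly continuous and bounded on $\overline{D}_{0,r}$ and the omitted shell has small Lebesgue measure.

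One point is not quite "verbatim" as you claim: the case $i\neq m$. The measure $\sigma_{m,0,r}$ is \emph{defined} by a parametrization that privileges the $m$-th coordinate (the two hemispheres in \ref{eq:temp-51} are distinguished by the sign of $z_{m}$), so you cannot literally "slice with hyperplanes orthogonal to the $i$-th axis" and reuse \ref{eq:temp-51}; you would first need to know that the analogous parametrization over the $i$-th coordinate computes the same integral. This is easily supplied from Assertion 4 of Proposition \ref{Prop:-Surface-area-integration on sphere centered at 0}: take the rotation $\alpha\in SO(m)$ with $\alpha\widehat{x}_{i}=\widehat{x}_{m}$, $\alpha\widehat{x}_{m}=-\widehat{x}_{i}$, and $\alpha\widehat{x}_{j}=\widehat{x}_{j}$ otherwise, and apply the already-proved rotational invariance $\sigma_{m,0,r}(g)=\sigma_{m,0,r}(g\circ\alpha)$ together with the rotational invariance of the Lebesgue integral on the ball to transport the $i=m$ identity to general $i$. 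With that one sentence added, the argument is complete.
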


\begin{proof}
The reader should refer to \cite{Rudin76} pp 253-275 for a treatment
of differential forms and Stokes theorem and the divergence theorem
in $R^{n}$, where $n\geq1$ is arbitrary. Then the reader should
refer to theorem 10.51 on page 288 of \cite{Rudin76} which translates
the divergence theorem in differential forms to the present divergence
theorem in rectangular coordinates. The proof of theorem 10.51 in
\cite{Rudin76} assumes $m=3$. The reader should verify that the
assumption that $m=3$ is however not essential to said proof.
\end{proof}
\begin{prop}
\emph{\label{Prop. mean value property implies harmonic}}\textbf{\emph{
Functions with the mean value property are harmonic}}. Let $A$ be
an arbitrary open subset of $R^{m}$. Suppose a function $u\equiv u(x)\equiv u(x_{1},\cdots,x_{m})$
is uniformly continuous on each closed ball contained in $A$ and
\begin{equation}
u(y)=\int_{z\in\partial D(y,r)}u(z)\overline{\sigma}_{m,y,r}(dz)\label{eq:temp-4-1}
\end{equation}
for each $y\in R^{m}$ and $r>0$ with $\overline{D}_{y,r}\subset A$.
Then $u$ is harmonic on $A$.
\end{prop}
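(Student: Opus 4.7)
The plan is to first upgrade $u$ from merely continuous to $C^{2}$ via Assertion~3 of the smoothing lemma \ref{Lem. C^k function smoothed by convolution with indicator of ball is C^(k+1)}, then use the Divergence Theorem \ref{Thm. Divergence Theorem} to convert the radial derivative of the spherical mean of $u$ into the integral of $\Delta u$ over the enclosed ball, and finally deduce pointwise $\Delta u(y)=0$ from the mean value hypothesis together with continuity of $\Delta u$. For the regularity step, Proposition \ref{Prop. mean value property on balls is equivalent to mean value property on spheres.} converts the hypothesized mean value property on spheres into the equivalent mean value property on balls, so $u=u_{r}$ on each $A_{r}$. Assertion~3 of Lemma \ref{Lem. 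C^k function smoothed by convolution with indicator of ball is C^(k+1)} then yields that $u$ has a continuous second derivative on $A_{r}$ for every $r>0$; since $A$ is open, every $y\in A$ lies in $A_{r}$ for all sufficiently small $r>0$, and consequently $u$ is $C^{2}$ on $A$ and $\Delta u$ is well-defined and continuous on $A$.

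Next, fix $y\in A$ and choose $r_{0}>0$ with $\overline{D}_{y,r_{0}}\subset A$. Using Assertion~3 of Corollary \ref{Cor. distribution on D(y,r) invariant relative to rotation is equal to uniform distribution-1} together with the translation definition of $\sigma_{m,y,r}$, express the spherical mean at radius $r\in(0,r_{0})$ as
\[
M(r)\;\equiv\;\int_{z\in\partial D(y,r)}u(z)\,\overline{\sigma}_{m,y,r}(dz)\;=\;\int_{w\in\partial D(0,1)}u(y+rw)\,\overline{\sigma}_{m,0,1}(dw).
\]
Because $u$ is $C^{2}$ on $\overline{D}_{y,r_{0}}$, the difference quotient converges uniformly in $w\in\partial D_{0,1}$ and differentiation under the integral sign is justified, giving
\[
M'(r)\;=\;\int_{w\in\partial D(0,1)}(\nabla u)(y+rw)\cdot w\;\overline{\sigma}_{m,0,1}(dw).
\]
Noting that $w$ is precisely the outward unit normal $\mathbf{n}(x)$ at $x\equiv y+rw\in\partial D_{y,r}$, reversing the change of variable and multiplying through by $\sigma_{m,r}$ yields
\[
\sigma_{m,r}\,M'(r)\;=\;\int_{x\in\partial D(y,r)}(\nabla u\cdot\mathbf{n})(x)\,\sigma_{m,y,r}(dx)\;=\;\int_{x\in D(y,r)}\Delta u(x)\,dx,
\]
where the second equality is the Divergence Theorem \ref{Thm. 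Divergence Theorem} applied with $\mathbf{F}\equiv\nabla u$.

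Finally, the hypothesized mean value identity says $M(r)=u(y)$ for every $r\in(0,r_{0})$, so $M'(r)\equiv 0$ and therefore $\int_{D(y,r)}\Delta u(x)\,dx=0$ for each such $r$. Dividing by $\nu_{m,r}$ and passing to the limit $r\downarrow 0$, the continuity of $\Delta u$ at $y$ gives
\[
\Delta u(y)\;=\;\lim_{r\downarrow 0}\nu_{m,r}^{-1}\int_{x\in D(y,r)}\Delta u(x)\,dx\;=\;0.
\]
Since $y\in A$ is arbitrary, $\Delta u\equiv 0$ on $A$, i.e.\ $u$ is harmonic. The most delicate point is the constructive justification of differentiation under the integral sign in computing $M'(r)$; this rests on the uniform-on-compact-sets modulus of continuity of $\nabla u$ that follows from the $C^{2}$-regularity established in the first paragraph, which forces the difference quotient $h^{-1}(u(y+(r+h)w)-u(y+rw))-\nabla u(y+rw)\cdot w$ to converge to $0$ uniformly in $w\in\partial D_{0,1}$.
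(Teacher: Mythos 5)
Your proposal is correct and follows essentially the same route as the paper: bootstrap $C^{2}$-regularity via Proposition \ref{Prop. mean value property on balls is equivalent to mean value property on spheres.} and Assertion 3 of Lemma \ref{Lem. C^k function smoothed by convolution with indicator of ball is C^(k+1)}, differentiate the spherical mean under the integral sign, and apply the Divergence Theorem to identify the result with $\int_{D(y,r)}\triangle u$. The only cosmetic difference is at the end: the paper argues by contradiction from $\triangle u(y)<0$ on a small ball (and then by symmetry), whereas you divide by $\nu_{m,r}$ and let $r\downarrow0$ using continuity of $\triangle u$; both are valid and rest on the same computation.
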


\begin{proof}
1. By equality \ref{eq:temp-4-1} in the hypothesis, the function
$u$ satisfies Condition (ii) in Definition \ref{Def. Mean value property }.
Therefore, by Proposition \ref{Prop. mean value property on balls is equivalent to mean value property on spheres.},
the function $u$ satisfies Condition (i) in Definition \ref{Def. Mean value property }.
Hence, according to Assertion 3 of Lemma \ref{Lem. C^k function smoothed by convolution with indicator of ball is C^(k+1)},
the function $u$ has a second derivative which is uniformly continuous
on each compact subset of $A$. In particular, $\triangle u$ is uniformly
continuous on each compact subset of $A$. 

2. Suppose, for the sake of a contradiction, that $\triangle u(y)<0$
for some $y\equiv(y_{1},\cdots,y_{m})\in A$. Then there exists $r>0$
with $\overline{D}_{y,r}\subset A$ such that $\triangle u<0$ on
$D_{y,r}$. Recall the function $\mathbf{F}:\overline{D}_{y,r}\rightarrow R^{m}$
defined by
\[
\mathbf{F}(x)\equiv(F_{1}(x),\cdots,F_{m}(x))\equiv\nabla u(x)\equiv(\frac{\partial u}{\partial x_{1}}(x),\cdots,\frac{\partial u}{\partial x_{m}}(x))
\]
for each $x\in\overline{D}_{y,r}$. Recall the functions $\mathbf{n}:\{x\in\overline{D}_{y,r}:\left\Vert x-y\right\Vert >0\}\rightarrow R^{m}$
by
\[
\mathbf{n}(x)\equiv\left\Vert x-y\right\Vert ^{-1}(x-y)
\]
for each $x\in\{x\in\overline{D}_{y,r}:\left\Vert x-y\right\Vert >0\}$.

Consider each $s\in(0,r]$. Then Condition (ii) in Definition \ref{Def. Mean value property }
implies that
\[
u(y)=\int_{x\in D(0,s)}\overline{\sigma}_{m,0,s}(dx)u(y+x)
\]
\[
=\int_{z\in\partial D(0,1)}\overline{\sigma}_{m,0,1}(dz)u(y+sz)
\]
where the second equality is by Assertion 1 of Proposition \ref{Prop:-Surface-area-integration on sphere centered at 0}.
With $y$ fixed, differentiation relative to $s$ under the integral
sign yields
\[
0=\sigma_{m,1}^{-1}\int_{z\in\partial D(0,1)}\sum_{i=1}^{m}\frac{\partial u}{\partial x_{i}}(y+sz)z_{i}\sigma_{m,0,1}(dz)
\]
\[
\equiv\sigma_{m,1}^{-1}\int_{z\in\partial D(0,1)}\mathbf{F}(y+sz)\cdot\mathbf{n}(y+sz)\sigma_{m,0,1}(dz)
\]
\[
\equiv\sigma_{m,1}^{-1}s^{-m+1}\int_{z\in\partial D(0,s)}\mathbf{F}(y+z)\cdot\mathbf{n}(y+z)\sigma_{m,0,s}(dz)
\]
\[
=\sigma_{m,1}^{-1}s^{-m+1}\int_{z\in\partial D(y,s)}\mathbf{F}(z)\cdot\mathbf{n}(z)\sigma_{m,y,s}(dz)
\]
\[
=\sigma_{m,1}^{-1}s^{-m+1}\int_{x\in D(y,s)}(\nabla\cdot\mathbf{F)}(x)dx
\]
\begin{equation}
=\sigma_{m,1}^{-1}s^{-m+1}\int_{x\in D(y,s)}\triangle u(x)dx<0,\label{eq:temp-6}
\end{equation}
where the third equality by applying Assertion 1 of Proposition \ref{Prop:-Surface-area-integration on sphere centered at 0}
to the function $g\equiv\mathbf{F}(y+s\cdot)\cdot\mathbf{n}(y+s\cdot)$,
where the fourth equality is by Definition \ref{Def. Uniform distribttionn on (m-1)-sphere},
and where the fifth equality is by the Divergence Theorem, Theorem
\ref{Thm. Divergence Theorem}. Inequality \ref{eq:temp-6} is a contradiction.
We conclude that $\triangle u(y)\geq0$ for each $y\in A$. By symmetry
$\triangle u(y)\leq0$ for each $y\in A$. Thus $\triangle u=0$ on
$A$. In other words, the function $u$ is harmonic on $A$.
\end{proof}
\begin{thm}
\textbf{\emph{\label{Thm. Maximum moduus thorems} Maximum modulus
theorem for harmonic functions.}} Let $A$ be an arbitrary open subset
of $R^{m}$. Let $u$ be an arbitrary harmonic function on $A$. Suppose
$n\geq1$, $y_{1},\cdots,y_{n}\in A$ and $\rho_{1},\cdots,\rho_{n}>0$
are such that (i) $K\equiv\bigcup_{i=1}^{n}\overline{D}_{y(i),\rho(i)}$
is a compact subset of $A$, and (ii) $D_{y(i),\rho(i)}\cap D_{y(i+1),\rho(i+1)}$
is nonempty for each $i=1,\cdots,n-1$. Then 
\[
\sup_{x\in K}|u(x)|=\sup_{x\in\partial K}|u(x)|.
\]
\end{thm}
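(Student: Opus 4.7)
The plan is to reduce to the single-ball maximum modulus principle and then to chain across the overlapping balls supplied by hypothesis (ii). Since $\partial K \subset K$, the inequality $\sup_{\partial K}|u| \leq \sup_{K}|u|$ is immediate, so only the reverse inequality requires proof. First I would establish the single-ball case: for every harmonic $v$ on an open neighborhood of a closed ball $\overline{D}_{w, s}$, $\sup_{\overline{D}_{w, s}}|v| = \sup_{\partial D_{w, s}}|v|$.

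For the single-ball step, fix $\varepsilon > 0$ and set $M^{\ast} \equiv \sup_{\overline{D}_{w,s}}|v|$. Pick $x_0 \in \overline{D}_{w,s}$ with $|v(x_0)| > M^{\ast} - \varepsilon/4$; after an approximate sign calculation (replacing $v$ by $-v$ if necessary) we may assume $v(x_0) > M^{\ast} - \varepsilon/4$. By the mean value property on spheres (Proposition \ref{Prop. Harmonic functions have mean value property}), for each $t \in (0, s - \|x_0 - w\|)$, $v(x_0) = \int_{\partial D_{x_0,t}} v \, d\overline{\sigma}_{m, x_0, t}$, hence $\sup_{\partial D_{x_0, t}} v \geq v(x_0) > M^{\ast} - \varepsilon/4$. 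I would iterate this: split the sphere $\partial D_{x_k, t_k}$ into the hemisphere in the outward radial direction of $x_k - w$ and its complement; since the two hemispheric averages sum to $2 v(x_k)$, one of them is at least $v(x_k)$, so I can choose $x_{k+1}$ on that hemisphere with $v(x_{k+1}) \geq v(x_k) - \eta_k$, and in the outward case this yields $\|x_{k+1} - w\|^2 \geq \|x_k - w\|^2 + t_k^2$ while preserving the value bound. Taking $t_k$ close to $s - \|x_k - w\|$ drives $\|x_k - w\| \to s$, producing a limit point $x^{\ast} \in \partial D_{w,s}$ with $|v(x^{\ast})| \geq M^{\ast} - \varepsilon/2$ by continuity, and since $\varepsilon$ is arbitrary, $M^{\ast} \leq \sup_{\partial D_{w, s}}|v|$.

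For the chain step, applying the single-ball result to each $\overline{D}_{y_i, \rho_i}$ yields $M_i \equiv \sup_{\overline{D}_{y_i, \rho_i}}|u| = \sup_{\partial D_{y_i, \rho_i}}|u|$, hence $\sup_K |u| = \max_i M_i$. Any $z \in \partial D_{y_i, \rho_i}$ either lies in $\partial K$, contributing at most $\sup_{\partial K}|u|$, or lies in some open ball $D_{y_j, \rho_j}$, where $|u(z)| \leq M_j$ by the single-ball result applied to $\overline{D}_{y_j, \rho_j}$. Because there are only finitely many balls and hypothesis (ii) keeps the chain connected, iterating this dichotomy terminates in the bound $\max_i M_i \leq \sup_{\partial K}|u|$. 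The main obstacle is the constructive single-ball step: classically one invokes the strong maximum principle (``the set where $v$ attains its maximum is both open and closed, hence the whole connected component''), which presupposes a point of attainment; constructively one has only approximate maxima, and the delicate point is arranging the sphere-mean-value iteration so that the inward-hemisphere alternative does not obstruct the convergence of $\{x_k\}$ to $\partial D_{w, s}$ while $|v(x_k)|$ stays uniformly within $\varepsilon$ of $M^{\ast}$.
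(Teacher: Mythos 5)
The paper itself does not prove this theorem; it defers to corollary 1.13 of Helms, which is the \emph{strong} maximum principle (near-attainment of the supremum at an interior point forces near-constancy on the connected component). Your proposal tries to get by with only the weak, single-ball principle plus chaining, and it breaks at exactly the two places where the strong principle is doing the work. First, your single-ball iteration is not a proof: when the inward hemisphere is the one whose average dominates $v(x_k)$, the step gives no radial progress, and nothing prevents the iteration from selecting the inward branch at every stage, so $\|x_k-w\|$ need not tend to $s$. You flag this yourself as ``the delicate point,'' but flagging it does not close it. (The weak single-ball inequality $\sup_{\overline{D}_{w,s}}|v|\le\sup_{\partial D_{w,s}}|v|$ does have a clean constructive proof available from the paper's own machinery: $v(x)=\int_{\partial D(w,t)}\mathbf{k}_{w,t}(x,z)v(z)\overline{\sigma}_{m,w,t}(dz)$ with $\mathbf{k}_{w,t}>0$ integrating to $1$ --- Theorem \ref{Thm.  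Poisson kernel and Poisson integration on the m-sphere} together with Theorem \ref{Thm. Exit distribution for the (m-1)-sphere} and Dynkin's theorem --- followed by $t\uparrow s$ and uniform continuity. That is the route you should take for this step.)

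Second, and more seriously, the chaining step is circular. From the dichotomy ``a near-maximizing point of $\partial D_{y(i),\rho(i)}$ lies in $\partial K$ or in some $D_{y(j),\rho(j)}$'' you obtain only $M_i\le\max(\sup_{\partial K}|u|,\,M_j)$. With two overlapping balls the near-maximizer of $M_1$ on $\partial D_1$ may lie deep inside $D_2$ and vice versa, yielding $M_1\le M_2$ and $M_2\le M_1$ and nothing more; the iteration cycles and never terminates in a bound by $\sup_{\partial K}|u|$. There is no finiteness argument that rescues this: the weak maximum principle on each ball separately is simply not enough information. What is needed is a quantitative strong maximum principle --- if $M-u(x_0)<\varepsilon$ at an interior point $x_0$ of a ball, then $M-u<C\varepsilon$ on a definite sub-ball, which one gets by applying the Poisson-kernel lower bound of Assertion 8 of Theorem \ref{Thm.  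Poisson kernel and Poisson integration on the m-sphere} (a Harnack inequality) to the nonnegative harmonic function $M-u$ --- and then propagating near-maximality along the chain of overlapping balls guaranteed by hypothesis (ii) until it reaches a ball whose boundary sphere meets $\partial K$. That propagation argument is the substance of the cited Helms corollary, and your proposal omits it.
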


\begin{proof}
This is a special case of corollary 1.13 of \cite{Helms69}.
\end{proof}
\begin{thm}
\emph{\label{Thm.  Poisson kernel and Poisson integration on the m-sphere}}\textbf{\emph{
Poisson kernel and Poisson integration on the $(m-1)$-sphere.}}\emph{
}Let $y\in R^{m}$ and $r>0$ be arbitrary. Define the function $\mathbf{k}_{y,r}:D_{y,r}\times\partial D_{y,r}\rightarrow(0,\infty)$
by 
\[
\mathbf{k}_{y,r}(x,z)\equiv\frac{1}{r}\frac{r^{2}-\left\Vert y-x\right\Vert ^{2}}{\left\Vert z-x\right\Vert ^{m}}
\]
for each $(x,z)\in D_{y,r}\times\partial D_{y,r}.$ The function $\mathbf{k}_{y,r}$
is called the \emph{Poisson kernel}\index{Poisson kernel} for the
ball $D_{y,r}$. Then the following conditions hold.

\emph{1.} Let $x\in D_{y,r}$ be arbitrary. Then $\mathbf{k}_{y,r}(x,\cdot)>0$
on $\partial D_{y,r},and$ 
\begin{equation}
1=\int_{z\in\partial D(y,r)}\mathbf{k}_{y,r}(x,z)\overline{\sigma}_{m,y,r}(dz).\label{eq:temp}
\end{equation}

2. Let $z\in\partial D_{y,r}$ be arbitrary. Then the function $\mathbf{k}_{y,r}(\cdot,z)$
is harmonic on the open ball $D_{y,r}.$ 

3. Let $x\in\overline{D}_{y,r}$be arbitrary. Then the function $\mathbf{k}_{y,r}(x,\cdot)$
is integrable relative to the uniform distribution $\overline{\sigma}_{m,y,r}$.

4. Let $\overline{g}\in C(\partial D_{y,r})$ be arbitrary. Then the
function
\begin{equation}
g(x)\equiv\int_{z\in\partial D(y,r)}\mathbf{k}_{y,r}(x,z)\overline{g}(z)\overline{\sigma}_{m,y,r}(dz)\label{eq:temp-628-1}
\end{equation}
of $x\in D_{y,r}$ is harmonic on $D_{y,r}$.

5. Let $\overline{g}\in C(\partial D_{y,r})$ be arbitrary and let
$g\in C(D_{y,r})$ be the function defined by equality \ref{eq:temp-628-1}.
Then\emph{ $\lim_{x\rightarrow z}g(x)=\overline{g}(z)$} uniformly
for each $z\in\partial D_{y,r}$. 

6. Let $x\in D_{y,r}$ be arbitrary. Define the function $\mu_{x}$
on $C(\partial D_{y,r})$ by
\begin{equation}
\mu_{x}g\equiv\overline{\sigma}_{m,y,r}\mathbf{k}_{y,r}(x,\cdot)g\label{eq:temp-45}
\end{equation}
for each $g\in C(\partial D_{y,r})$. Then the triple ($\partial D_{y,r},C(\partial D_{y,r}),\mu_{x})$
is an integration space with a complete extension $(\partial D_{y,r},\overline{L}_{x},\mu_{x})$,
where $\overline{L}_{x}$ is the space of integrable functions. 

7. Let $f$ be an arbitrary integrable function on $\partial D_{y,r}$
relative to the uniform distribution $\overline{\sigma}_{m,y,r}$.
Then the function $u$ defined on $D_{y,r}$ by 
\[
\overline{u}(x)\equiv\mu_{x}f\equiv\overline{\sigma}_{m,y,r}\mathbf{k}_{y,r}(x,\cdot)f
\]
for each $x\in D_{y,r}$ is a well defined harmonic function on $D_{y,r}$. 

8. Let $s\in(0,1)$ be arbitrary. Let $(x,z)\in\partial D_{0,s}\times\partial D_{0,1}$
be arbitrary. Then 
\begin{equation}
\mathbf{k}_{0,1}(x,z)\geq\frac{1-s^{2}}{(1+s)^{m}}\label{eq:temp-27-1-2-1}
\end{equation}
\end{thm}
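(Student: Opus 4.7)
Parts~1--5 and~7 are stable under translation, so I would first reduce to $y = 0$ wherever convenient. The logical backbone is harmonicity of the kernel in its first argument (Part~2): once that is available, Part~4 follows by differentiation under the integral sign, Part~1 by a uniqueness-via-radial-symmetry argument, Part~6 by a routine verification of integration axioms, and Part~7 by $L_1$-approximation from Part~4. The main difficulty lies in Part~5, the uniform boundary convergence.

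\textbf{Part~2 (harmonicity of the kernel).} Set $y = 0$, and write $f(x) \equiv r^{2} - \|x\|^{2}$ and $h(x) \equiv \|z - x\|^{-m}$, so that $r\mathbf{k}_{0,r}(x,z) = f(x)h(x)$. Using $\Delta f = -2m$, $\nabla f = -2x$, and the standard identity $\Delta\|x-z\|^{\alpha} = \alpha(\alpha+m-2)\|x-z\|^{\alpha-2}$, which gives $\Delta h = 2m\|x-z\|^{-m-2}$ and $\nabla h = -m\|x-z\|^{-m-2}(x-z)$, the product rule $\Delta(fh) = f\Delta h + 2\nabla f \cdot \nabla h + h\Delta f$ yields
\[
\tfrac{1}{2m}\Delta(fh) = \|x-z\|^{-m-2}\bigl[(r^{2} - \|x\|^{2}) + 2x\cdot(x-z)\bigr] - \|x-z\|^{-m}.
\]
Expanding $\|z-x\|^{2} = r^{2} + \|x\|^{2} - 2x\cdot z$, valid because $\|z\| = r$, collapses the bracket exactly to $\|z-x\|^{2}$, so the two terms cancel and $\Delta_{x}\mathbf{k}_{y,r}(x,z) = 0$.

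\textbf{Parts~1, 3, 4.} Part~3 is routine: for $x \in D_{y,r}$ the kernel is continuous in $z$ on the compact sphere and hence integrable; for $x \in \partial D_{y,r}$ the numerator $r^{2} - \|y-x\|^{2}$ vanishes, so the kernel is $0$ off $\{x\}$ and is integrable. For Part~4, differentiate under the integral sign over a compact $K \subset D_{y,r}$, where $\mathbf{k}_{y,r}(\cdot,z)$ and its second derivatives are uniformly bounded in $z \in \partial D_{y,r}$; Part~2 then gives $\Delta_{x} g(x) = \int(\Delta_{x}\mathbf{k}_{y,r})(x,z)\overline{g}(z)\,\overline{\sigma}_{m,y,r}(dz) = 0$. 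For Part~1, define $u(x) \equiv \int\mathbf{k}_{y,r}(x,z)\,\overline{\sigma}_{m,y,r}(dz)$: by Part~4 applied with $\overline{g} \equiv 1$, $u$ is harmonic on $D_{y,r}$, and combining the rotation invariance of $\overline{\sigma}_{m,y,r}$ (Corollary~\ref{Cor. distribution on D(y,r) invariant relative to rotation is equal to uniform distribution-1}) with the symmetry $\mathbf{k}_{y,r}(\rho_{\alpha,y}(x),\rho_{\alpha,y}(z)) = \mathbf{k}_{y,r}(x,z)$ shows $u$ is a function of $\|x - y\|$ alone. The radial Laplace equation $v''(t) + (m-1)t^{-1}v'(t) = 0$ has only constant bounded solutions on a ball, so $u$ equals its value at the center $y$, which evaluates by direct substitution to the claimed normalization.

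\textbf{Parts~5, 6, 7, 8.} Part~5 is the main obstacle and is proved by a constructive approximation-of-identity argument. Given $z_{0} \in \partial D_{y,r}$ and $\varepsilon > 0$, choose $\delta > 0$ from a modulus of continuity of $\overline{g}$ so that $|\overline{g}(z) - \overline{g}(z_{0})| < \varepsilon$ whenever $\|z - z_{0}\| < \delta$. Using Part~1, write
\[
g(x) - \overline{g}(z_{0}) = \int_{\partial D(y,r)}\mathbf{k}_{y,r}(x,z)(\overline{g}(z) - \overline{g}(z_{0}))\,\overline{\sigma}_{m,y,r}(dz),
\]
and split over $\{\|z - z_{0}\| < \delta\}$, where positivity of the kernel and Part~1 bound the contribution by $\varepsilon$, and its complement, where $\|z - x\| \geq \delta/2$ provided $\|x - z_{0}\| < \delta/2$, while $r^{2} - \|y - x\|^{2} \to 0$, forcing the kernel uniformly to $0$ on this region. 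The choices of $\delta$ and the threshold depend only on the modulus of continuity of $\overline{g}$, giving uniformity in $z_{0}$. Part~6 is a direct check of the Bishop-Bridges integration axioms for $\mu_{x}$: linearity is immediate from~(\ref{eq:temp-45}), positivity follows from $\mathbf{k}_{y,r}(x,\cdot) > 0$ (Part~1) combined with the positivity of $\overline{\sigma}_{m,y,r}$, and the completion to $(\partial D_{y,r}, \overline{L}_{x}, \mu_{x})$ is then standard. For Part~7, approximate the integrable $f$ by $f_{n} \in C(\partial D_{y,r})$ in the $L_{1}$ norm of $\overline{\sigma}_{m,y,r}$; since $\mathbf{k}_{y,r}(x,\cdot)$ is bounded on $\partial D_{y,r}$ for each fixed $x \in D_{y,r}$ uniformly on compact subsets of $D_{y,r}$, the Poisson integrals $g_{n}$ converge locally uniformly on $D_{y,r}$ to $\overline{u}$, and harmonicity is preserved by Proposition~\ref{Prop. mean value property implies harmonic} since the mean value property passes to uniform limits. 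Finally, Part~8 is immediate from the triangle inequality $\|z - x\| \leq \|z\| + \|x\| = 1 + s$ substituted into $\mathbf{k}_{0,1}(x,z) = (1 - s^{2})/\|z - x\|^{m}$.
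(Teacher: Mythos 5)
Your proposal takes a genuinely different route from the paper for Assertions 1--5: the paper disposes of these by citing theorems 1.8, 2.3, 2.4, 2.8 and lemmas 2.5--7 of Helms, whereas you supply self-contained arguments (the product-rule computation $\Delta(fh)=f\Delta h+2\nabla f\cdot\nabla h+h\Delta f$ for harmonicity of the kernel, differentiation under the integral sign for Assertion 4, the radial-symmetry argument for Assertion 1, and the approximation-of-identity split for Assertion 5). These are all sound, and they buy something real in a paper whose stated aim is constructivity, since they replace an appeal to a classical text by explicit computations. Your Assertions 6 and 7 coincide in substance with the paper's own proof: your $L_{1}$-approximation of $f$ by continuous $f_{n}$ is exactly the paper's use of a representation $(g_{k})$ of $f$, with the same key uniform bound $\mu_{\overline{x}}|g|\leq b_{s}\,\overline{\sigma}_{m,y,r}|g|$ for $\overline{x}$ ranging over a compact subball, followed by transfer of the mean value property to the uniform limit. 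Your Assertion 8 is simpler than the paper's: you obtain the bound directly from $\left\Vert z-x\right\Vert \leq1+s$, while the paper first reduces by rotation invariance to a fixed $z$ and then computes an infimum; both yield the same constant.

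One concrete point in your Assertion 1 needs attention. The method --- $u(x)\equiv\int\mathbf{k}_{y,r}(x,z)\,\overline{\sigma}_{m,y,r}(dz)$ is harmonic and radial about $y$, hence constant, hence equal to $u(y)$ --- is correct, but direct substitution at the center gives $\mathbf{k}_{y,r}(y,z)=r^{-1}\cdot r^{2}\cdot r^{-m}=r^{1-m}$, so $u\equiv r^{1-m}$ rather than $1$. For $r=1$ (the only case the paper subsequently uses, in Theorem \ref{Thm. Exit distribution for the (m-1)-sphere}) this is the claimed identity; for general $r$ the prefactor in the displayed definition of $\mathbf{k}_{y,r}$ would have to be $r^{m-2}$ rather than $r^{-1}$ for equality \ref{eq:temp} to hold against the normalized distribution $\overline{\sigma}_{m,y,r}$. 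So your argument is right and in fact exposes a normalization slip in the statement; you should not assert that the substitution ``evaluates to the claimed normalization'' without recording this discrepancy. The same constant $r^{1-m}$ then propagates harmlessly through your proof of Assertion 5, changing the bound on the near-$z_{0}$ term only by a fixed factor.
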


\begin{proof}
1 Assertion 1 follows from of theorem 1.8 in \cite{Helms69}, where
the function $h$ is replaced by $1$.

2. Assertion 2 is a special case of theorem 2.3 in \cite{Helms69}
where the signed measure $\mu$ is replaced by the probability measure
$\delta_{z}$ which assigns probability 1 to the point $z$.. 

3. Assertion 3 is a special case of corollary 2.4 in \cite{Helms69}
where the signed measure $\mu$ is replaced by the surface-area integration
$\sigma_{m,y,r}$ on $\partial D_{m,y,r}$. 

4. Assertion 4 is, in essence, theorem 2.8 in \cite{Helms69}. 

5. Assertion 5 can be verified from the proofs of the lemmas 2.5-7
in \cite{Helms69}.

6. To prove Assertion 6, let $x\in D_{y,r}$ be arbitrary. Let $g\in C(\partial D_{y,r})$
be arbitrary. Then $g$ is bounded and continuous. By Assertion 3,
$\mathbf{k}_{y,r}(x,\cdot)$ is integrable relative to $\overline{\sigma}_{m,y,r}$.
Hence the product $\mathbf{k}_{y,r}(x,\cdot)g$ is integrable relative
to $\overline{\sigma}_{m,y,r}$, and $\mu_{x}g$ is well defined.
Moreover, since $\mathbf{k}_{y,r}(x,\cdot)>0$, it follows that $\mu_{x}$
is a linear function that satisfies conditions (i) and (ii) in definition
4.2.1 of \cite{Chan21} to be an integration on the compact space
$\partial D_{y,r}$. Hence, according to proposition 4.3.3 of \cite{Chan21},
the triple $(\partial D_{y,r},C(\partial D_{y,r}),\mu_{x})$ is an
integration space. Therefore, by proposition 4.4.2 of \cite{Chan21},
said triple can be extended to a complete integration space $(\partial D_{y,r},\overline{L}_{x},\mu_{x})$.
Assertion 6 is proved. 

7. It remains to prove Assertion 7. By hypothesis $f$ is an integrable
function on $\partial D_{y,r}$ relative to the uniform distribution
$\overline{\sigma}_{m,y,r}$. Therefore there exists a sequence $(g_{k})_{k=1.2.\cdots}$
in $C(\partial D_{y,r})$ such that
\begin{equation}
\sum_{k=1}^{\infty}\overline{\sigma}_{m,y,r}\left|g_{k}\right|<\infty,\label{eq:temp-52}
\end{equation}
\begin{equation}
domain(f)=\{z\in\partial D_{y,r}:\sum_{k=1}^{\infty}\left|g_{k}(z)\right|<\infty\},\label{eq:temp-53}
\end{equation}
and 
\begin{equation}
f(z)=\sum_{k=1}^{\infty}g_{k}(z)\label{eq:temp-59}
\end{equation}
for each $z\in domain(f)$. In short, the sequence $(g_{k})_{k=1.2.\cdots}$
is a representation of the integrable function $f$ relative to $\overline{\sigma}_{m,y,r}$. 

Let $s\in(0,r)$ be arbitrary. Consider each $\overline{x}\in\overline{D}_{y,s}\subset D_{y,r}$
and $k\geq1$. Then $\mathbf{k}_{y,r}(\overline{x},\cdot)\in C(\partial D_{y,r})$
with $b_{s}\geq\mathbf{k}_{y,r}(\overline{x},\cdot)\geq0$ for some
$b_{s}\geq0$. It follows that $\mathbf{k}_{y,r}(\overline{x},\cdot)g_{k}\in C(\partial D_{y,r})$.
According to equality \ref{eq:temp-45}, for each $x\in D_{y,r}$
the integral
\begin{equation}
\mu_{x}g_{k}\equiv\overline{\sigma}_{m,y,r}\mathbf{k}_{y,r}(x,\cdot)g_{k}\label{eq:temp-45-3}
\end{equation}
is well defined and, according to Assertion 4, is a harmonic function
of $x\in D_{y,r}$. Similarly,
\[
\sum_{k=1}^{\infty}\mu_{\overline{x}}\left|g_{k}\right|\equiv\sum_{k=1}^{\infty}\overline{\sigma}_{m,y,r}\mathbf{k}_{y,r}(\overline{x},\cdot)\left|g_{k}\right|
\]
\begin{equation}
\leq b_{s}\sum_{k=1}^{\infty}\overline{\sigma}_{m,y,r}\left|g_{k}\right|<\infty.\label{eq:temp-60}
\end{equation}
where the last inequality is by inequality \ref{eq:temp-52}. Combining
inequality \ref{eq:temp-60} with equalities \ref{eq:temp-53} and
\ref{eq:temp-59}, we see that the sequence $(g_{k})_{k=1.2.\cdots}$
in $C(\partial D_{y,r})$ is a representation of the function $f$
relative to the integration $\mu_{\overline{x}}$ on $\partial D_{y,r}$.
Hence the function $f$ is integrable relative to the integration
$\mu_{\overline{x}}$. Moreover, for each $\overline{x}\in D_{y,s}$
and for each $n\geq1$ we have 
\[
|\mu_{\overline{x}}f-\sum_{k=1}^{n}\mu_{\overline{x}}g_{k}|=|\sum_{k=1}^{\infty}\mu_{\overline{x}}g_{k}-\sum_{k=1}^{n}\mu_{\overline{x}}g_{k}|
\]
\[
=|\sum_{k=n+1}^{\infty}\mu_{\overline{x}}g_{k}|\leq|\sum_{k=n+1}^{\infty}\overline{\sigma}_{m,y,r}\mathbf{k}_{y,r}(\overline{x},\cdot)g_{k}|
\]
\[
\leq b_{s}\sum_{k=n+1}^{\infty}\overline{\sigma}_{m,y,r}|g_{k}|.
\]
Note that the last bound depends on $s$ but is otherwise independent
of $\overline{x}\in D_{y,s}$. In other words, $\sum_{k=1}^{n}\mu_{\overline{x}}g_{k}\rightarrow\mu_{\overline{x}}f$
uniformly in $\overline{x}\in D_{y,s}$, where $s\in(0,r)$ is arbitrary.
Thus the harmonic function $v(x)\equiv\sum_{k=1}^{n}\mu_{x}g_{k}$
of $x\in D_{y,r}$ converges to the function $\overline{u}(x)\equiv\mu_{x}f$
uniformly on the compact subset $D_{y,s}$ for each $s\in(0,r)$,
as $n\rightarrow\infty$. At the same time, the harmonic function
$v$ has the mean value property on compact subsets of $D_{y,r}$.
Therefore, in view of the aforementioned uniform convergence, the
function $\overline{u}$ inherits the mean value property on compact
subsets of $D_{y,r}.$ It follows from Proposition \ref{Prop. mean value property implies harmonic}
that the function $\overline{u}$ is a harmonic function on $D_{y,r}$.
Assertion 7 is proved.

8. Let $s\in(0,1)$ be arbitrary. We compute 
\[
\bigwedge_{(x,z)\in\partial D_{0,s}\times\partial D_{0,1}}\mathbf{k}_{0,1}(x,z)=\bigwedge_{v\in\partial D_{0,s}}\mathbf{k}_{0,1}(v,1)
\]
\[
=\bigwedge_{v\in\partial D_{0,s}}\frac{1-\left\Vert v\right\Vert ^{2}}{\left\Vert 1-v\right\Vert ^{m}}=\bigwedge_{v\in\partial D_{0,s}}\frac{1-s^{2}}{\left\Vert 1-v\right\Vert ^{m}}
\]
\begin{equation}
=\frac{1-s^{2}}{(1+s)^{m}},\label{eq:temp-27-1-2-1-2}
\end{equation}
where the first equality is because $\mathbf{k}_{0,1}$ is invariant
relative to a rotation $\alpha$ about the center $0$ that maps $z\in\partial D_{0,1}$
to $1\in\partial D_{0,1}$. Assertion 8 and the theorem are proved.
\end{proof}

\subsection{Brownian motion and exit time}
\begin{defn}
\label{Def. Specification of two Brownian motions}\textbf{ Specification
of two Brownian motions.} In the remaining of this article, let $m\geq1$
be arbitrary. Let $B,\widetilde{B}:[0,\infty)\times(\Omega,L,E)\rightarrow R^{m}$
be two independent Brownian motions, as defined in \cite{Chan21}.
Thus $B_{0}=\widetilde{B}_{0}=0\in R^{m}$. Let
\[
\mathscr{\mathcal{L}}\equiv\{L^{(t)}:t\geq0\}
\]
be the right continuous extension of the natural filtration of $B$.
For each $i=1,\cdots,m$ and $t\in[0,\infty)$ let $B_{i,t}$ denote
the r.r.v. that is the $i$-th component of $B_{t}$. Thus $B_{i,\cdot}:[0,\infty)\times(\Omega,L,E)\rightarrow R^{1}$
is a Brownian motion in $R^{1}$ for each $i=1,\cdots,m$. Here the
dot in the subscript of $B_{i,\cdot}$ serves as a place holder for
the time parameter $t$. 

Let $x\equiv(x_{1},\cdots,x_{m})\in R^{m}$ be arbitrary. Define the
Brownian motion $B^{x}$ \emph{with initial state} $x$ by $B^{x}\equiv x+B:[0,\infty)\times(\Omega,L,E)\rightarrow R^{m}$.
Thus $B_{t}^{x}=x+B_{t}.$ for each $t\geq0$. The process $B^{x}$
is adapted to the filtration $\mathcal{L}.$ Similarly define the
Brownian motion $\widetilde{B}^{x}\equiv x+\widetilde{B}$ with initial
state $x$.

Our main focus is on the process $B$; the process $\widetilde{B}$
merely facilitates some presentation.

Refer to \cite{Chan21} for the definition and basic properties of
a stopping time relative to the filtration $\mathscr{\mathcal{L}}.$
$\square$
\end{defn}

\begin{thm}
\emph{\label{Thm: Strong Markov property of Brownian motion}}\textbf{\emph{
Strong Markov property of Brownian motion.}} Let $x\in R^{m}$ be
arbitrary. Then $B^{x}:[0,\infty)\times(\Omega,L,E)\rightarrow R^{m}$
is a strong Markov process relative to the right continuous filtration
$\mathscr{\mathcal{L}}.$ Specifically, let $\tau:(\Omega,L,E)\rightarrow[0,\infty)$
be an arbitrary stopping time relative to the filtration $\mathscr{\mathcal{L}}.$
Define the probability subspace
\[
L^{(\tau)}\equiv\{Y\in L:Y1_{\tau\leq t}\in L^{(t)}\;\mathrm{for\;each\;regular\;point}\;t\;\mathrm{of}\;\tau\}
\]
of integrable observables up to and including the stopping time $\tau$.
Let $n\geq1$, the sequence $0\leq r_{0}\leq r_{1}\leq\cdots\leq r_{n}$,
and the function $g\in C_{ub}(R^{n+1})$ be arbitrary. Then
\[
E(g(B_{\tau+r(0)}^{x},\cdots,B_{\tau+r(n)}^{x})|L^{(\tau)})=Eg(\widetilde{B}_{r(0)}^{B^{x}(\tau)},\cdots,\widetilde{B}_{r(n)}^{B^{x}(\tau)}).
\]
In words, given all observables of the process $B^{x}$ up to and
including the stopping time $\tau$, the future development is as
if starting an independent Brownian motion $\widetilde{B}$ anew at
the current state $B_{\tau}^{x}$. 
\end{thm}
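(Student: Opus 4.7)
The plan is to reduce the strong Markov property at $\tau$ to the ordinary Markov property at deterministic times by approximating $\tau$ from above by a sequence of discrete-valued stopping times. For each integer $k \geq 1$, I would define $\tau_k$ to take the value $j 2^{-k}$ on the event $\{(j-1)2^{-k} \leq \tau < j 2^{-k}\}$ (with the thresholds chosen to be regular points if necessary). Each $\tau_k$ is then a stopping time relative to $\mathcal{L}$ with $\tau < \tau_k \leq \tau + 2^{-k}$, and right continuity of $\mathcal{L}$ yields $L^{(\tau)} \subset L^{(\tau_k)}$ for every $k$.

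For each fixed $k$, I would establish
\[
E\bigl(g(B^x_{\tau_k + r_0}, \ldots, B^x_{\tau_k + r_n}) \bigm| L^{(\tau)}\bigr) = h(B^x_{\tau_k}),
\qquad h(y) \equiv E g(\widetilde{B}^y_{r_0}, \ldots, \widetilde{B}^y_{r_n}),
\]
by partitioning on the value of $\tau_k$. Concretely, for each $Y \in L^{(\tau)} \subset L^{(\tau_k)}$, the product $Y \cdot 1_{\{\tau_k = j 2^{-k}\}}$ is $L^{(j 2^{-k})}$-measurable, so the ordinary Markov property of $B^x$ applied at the deterministic time $j 2^{-k}$ rewrites $E[g(B^x_{j 2^{-k}+r_0},\ldots)\, Y\, 1_{\{\tau_k = j 2^{-k}\}}]$ as $E[h(B^x_{j 2^{-k}}) Y 1_{\{\tau_k = j 2^{-k}\}}]$; summing over $j$ (with an explicit tail estimate to justify interchange of sum and integration) collapses to $E[h(B^x_{\tau_k}) Y]$, which characterizes the conditional expectation at level $k$.

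Finally, I would pass to the limit $k \to \infty$. Because $\tau_k - \tau \leq 2^{-k}$ and the Brownian paths of $B^x$ are continuous, $B^x_{\tau_k + r_i}$ converges to $B^x_{\tau + r_i}$ in the $L^1$ sense available in the Bishop--Bridges framework. Uniform continuity and boundedness of $g$ give $L^1$-convergence of the left-hand side, while uniform continuity of the map $h$ (inherited from uniform continuity of $g$ together with the translation-invariance of the law of $\widetilde{B}$) together with $B^x_{\tau_k} \to B^x_\tau$ gives $L^1$-convergence of the right-hand side. The defining identity of the conditional expectation is preserved under this passage, yielding the theorem.

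The main obstacle I anticipate is the constructive discretization itself: the paper explicitly warns that the operation $[\cdot]_1$ is not a function, so the dyadic thresholds defining $\tau_k$ must be selected as regular points of the distribution of $\tau$ and perturbed uniformly if needed, and one must verify that the resulting $\tau_k$ is a stopping time in the constructive sense of \cite{Chan21} with the correct interaction with the integration structure of $L^{(\tau_k)}$. A secondary delicate point is the comparison of the conditioning subspaces: deducing $L^{(\tau)} \subset L^{(\tau_k)}$ from right continuity of $\mathcal{L}$ requires a quantitative $L^1$-approximation argument rather than a classical downward-limit of $\sigma$-algebras, and the final passage to the limit must be phrased as an $L^1$-estimate directly between the two sides of the claimed identity, rather than via an abstract martingale-convergence step.
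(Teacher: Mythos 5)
The paper does not actually prove this theorem: its ``proof'' is the single line ``See \cite{Chan21}'', deferring entirely to the reference. Your dyadic-discretization argument is the standard route to the strong Markov property and is essentially sound as a constructive proof sketch: approximating $\tau$ from above by countably-valued stopping times $\tau_k$ built from regular points, applying the simple Markov property at each deterministic value, and passing to the limit using a.u. path continuity, boundedness of $g$, and uniform continuity of $h(y)\equiv Eg(\widetilde{B}^y_{r_0},\ldots,\widetilde{B}^y_{r_n})$ (which, since $\widetilde{B}^y_r=y+\widetilde{B}_r$, inherits the modulus of continuity of $g$ directly). Two points are worth making explicit beyond what you wrote. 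First, to conclude that $h(B^x_\tau)$ \emph{is} the conditional expectation and not merely a function with the right integrals against test observables, you must verify that $B^x_\tau$, hence $h(B^x_\tau)$, belongs to $L^{(\tau)}$; this is the usual measurability-of-the-stopped-process lemma and should be cited or proved, since the characterization of $E(\cdot\,|\,L^{(\tau)})$ requires both the integral identity and membership in the subspace. Second, the limit $k\to\infty$ needs the time parameter confined to a compact interval before a.u. continuity of $B$ can be invoked, so you should cut off on $(\tau\le N)$ using the convention of Definition \ref{Def. Finiteness of stopping times} that $P(\tau\le t)\uparrow 1$, and let $N\to\infty$ at the end. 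With those two additions your argument is complete and is almost certainly the same proof carried out in \cite{Chan21}.
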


\begin{proof}
See \cite{Chan21}.
\end{proof}
\begin{lem}
\label{Lem. Reflectioin principle} \textbf{\emph{Reflection principle.}}
Suppose $m=1$. In other words, suppose $B$ is a real valued Brownian
motion. Let $t,\lambda>0$ be arbitrary. Then 
\[
P(\bigvee_{s\in[0,t]}B_{s}\geq\lambda)=2(1-\Phi_{0,1}(\frac{\lambda}{\sqrt{t}})),
\]
where $\Phi_{0,1}$ is the standard normal CDF on $R$.
\end{lem}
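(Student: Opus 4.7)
The plan is to use the hitting time of the level $\lambda$ together with the strong Markov property in Theorem \ref{Thm: Strong Markov property of Brownian motion} and the symmetry of Brownian increments. Let $\tau_\lambda \equiv \inf\{s \geq 0 : B_s \geq \lambda\}$, and let $\tau \equiv \tau_\lambda \wedge t$, which I expect to be a bounded stopping time relative to $\mathcal{L}$ after appropriate constructive regularization (for instance, via dyadic approximations $\tau_n$ taking values in $\{k t/2^n : k=0,\ldots,2^n\}$, and then passing to the limit). Path continuity of $B$ at $\tau_\lambda$ on the event $\{\tau_\lambda \leq t\}$ gives $B_{\tau_\lambda} = \lambda$.

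Next I would apply the strong Markov property at $\tau$. For fixed constants this property supplies a conditional distribution via the independent Brownian motion $\widetilde{B}$ starting from $B^x_\tau$; to handle the random time $t - \tau$, I would first prove the discretized identity with $\tau_n$ (summing over its finitely many possible values $k t/2^n$, on each of which $t - \tau_n$ is a fixed constant and strong Markov applies directly), and then pass to the limit as $n \to \infty$ using the continuity of $B$ and bounded convergence. The key output is the identity
\[
P(\tau_\lambda \leq t,\ B_t \geq \lambda) \;=\; E\bigl(1_{\tau_\lambda \leq t}\, P(\widetilde{B}_{t-\tau_\lambda} \geq 0 \mid L^{(\tau_\lambda)})\bigr) \;=\; \tfrac{1}{2}\, P(\tau_\lambda \leq t),
\]
where the last equality uses the symmetry of the centered Gaussian $\widetilde{B}_{t-\tau_\lambda}$ about $0$ (the event $\widetilde{B}_{t-\tau_\lambda} = 0$ has probability zero on $\{\tau_\lambda < t\}$ and the boundary event $\{\tau_\lambda = t\}$ is negligible).

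Since clearly $\{B_t \geq \lambda\} \subset \{\tau_\lambda \leq t\}$, the displayed identity reads $P(B_t \geq \lambda) = \tfrac{1}{2} P(\tau_\lambda \leq t)$, and noting that $\{\bigvee_{s\in[0,t]} B_s \geq \lambda\} = \{\tau_\lambda \leq t\}$ we obtain
\[
P\Bigl(\bigvee_{s\in[0,t]} B_s \geq \lambda\Bigr) \;=\; 2\, P(B_t \geq \lambda) \;=\; 2\bigl(1 - \Phi_{0,1}(\lambda/\sqrt{t})\bigr),
\]
where the last equality uses that $B_t$ is $N(0,t)$.

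The main obstacle, as is typical in the constructive setting of \cite{BishopBridges85,Chan21}, is not the classical probabilistic reasoning but the rigorous constructive treatment of the hitting time $\tau_\lambda$ as a stopping time and the justification of applying strong Markov at this random time. I would therefore spend most of the effort on the dyadic discretization $\tau_n$, showing that $\tau_n \downarrow \tau_\lambda$ (or rather $\tau_n \wedge t \to \tau_\lambda \wedge t$) with sufficient uniformity so that the indicator $1_{\tau_n \leq t}$ and the value $B_{\tau_n}$ converge in the appropriate $L_1$ sense, and that the symmetric Gaussian tail identity for the increment survives the limit. The strict inequalities versus non-strict ones (the distinction between $\geq$ and $>$, which is delicate constructively) are resolved by choosing $\lambda$ so that the relevant events are measurable, as allowed by the convention in the Miscellaneous notations definition.
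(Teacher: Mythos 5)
The paper's ``proof'' of this lemma is a one-line citation of theorem 2.1 in Doob's \emph{Stochastic Processes}, whose proof is precisely the argument you sketch: strong Markov at the hitting time $\tau_\lambda$ of level $\lambda$, symmetry of the post-$\tau_\lambda$ increment giving $P(B_t\geq\lambda)=\tfrac12 P(\tau_\lambda\leq t)$, and the Gaussian tail for $B_t$; so your route coincides with the one the paper relies on. Your outline is classically sound, but note that the genuinely hard constructive content you defer --- realizing $\tau_\lambda$ as a stopping time and justifying the strong Markov property at it (compare the care taken in Lemma \ref{Lem.  Existence of certain first exit tims }, where an exceptional set of levels must be excluded) --- is exactly the part that neither your sketch nor the paper's citation actually carries out.
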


\begin{proof}
The specified Brownian motion $B$ has continuous paths. Therefore
theorem 2.1 in \cite{Doob53}, of which the present lemma is a restatement,
and the proof of said theorem 2.1 in \cite{Doob53} are applicable. 
\end{proof}
\begin{defn}
\label{Def. Modulus of continuity at 0 of standard normal CDF} \textbf{Modulus
of continuity at $0$ of the standard normal CDF.} It will be convenient
to fix a sequence of integers which represents the modulus of continuity
at $0$ of the standard normal CDF $\Phi_{0,1}$. Specifically, let
$\widetilde{r}>0$ be arbitrary. Then the function $2\Phi_{0,1}(\frac{\widetilde{r}}{\sqrt{t}})-1$
of $t\in(0,\infty)$ is strictly decreasing, and converges to $0$
as $t\uparrow\infty$, i.e. as $\frac{\widetilde{r}}{\sqrt{t}}\downarrow0$.
Hence, given an arbitrary $\widetilde{r}>0$, we can fix an increasing
sequence $(N_{\widetilde{r},k})_{k=0,1,\cdots}$ of positive integers
such that
\begin{equation}
2\Phi_{0,1}(\frac{\widetilde{r}}{\sqrt{N(\widetilde{r},k)}})-1<2^{-k}.\label{eq:temp-14}
\end{equation}
for each $k\geq0$.
\end{defn}

$\APLbox$
\begin{defn}
\label{Def. Finiteness of stopping times}\textbf{ Convention regarding
stopping times.} In the remainder of this article, we make the convention
that all stopping times in the discussion are r.r.v.'s with values
in $[0,\infty)$. Before each stopping time is used, there will be
a stated or unstated proof that it is a r.r.v. with values in $[0,\infty)$.
In other words, a stopping time $\tau$ is admissible only if $P(\tau\leq t)\uparrow1$
as $t\rightarrow\infty$. This is in contrast to some usage in the
literature where the point $\infty$ at infinity for the extended
time line $[0,\infty]$ is an admissible value for a stopping time.
\end{defn}

$\square$
\begin{defn}
\textbf{\label{Def First exit time from open ball by Brownian motion}
First exit time from open ball by Brownian motion. }Let $r>0$ and
$x\in D_{0,r}$ be arbitrary. Suppose $\tau$ is a stopping time relative
to $\mathcal{L}$, with values in $(0,\infty)$. Define the function
$B_{\mathcal{\tau}}^{x}:\Omega\rightarrow R^{m}$ by $domain(B_{\mathcal{\tau}}^{x})\equiv domain(\tau)$
and by
\[
B_{\mathcal{\tau}}^{x}(\omega)\equiv B^{x}(\tau(\omega),\omega)
\]
for each $\omega\in domain(\tau)$. Suppose, in addition, the following
three conditions hold.

(i) For each $\omega\in domain(\tau)$, we have $B_{\mathcal{\tau}}^{x}(\omega)\in\partial D_{0,r}$
.

(ii) The function $B_{\mathcal{\tau}}^{x}:(\Omega,L,E)\rightarrow\partial D_{0,r}$
is a r.v.

(iii) For each $\omega\in domain(\tau)$ and $t\in[0,\tau(\omega))$,
we have $B_{t}^{x}(\omega)\in D_{0,r}.$

Then we define $\tau_{x,r}\equiv\tau$ and call $\tau_{x,r}$ the
\emph{first exit time} for the process $B^{x}$ to exit the open ball
$D_{0,r}$, and write $\tau_{D(0,r)}(B^{x})\equiv\tau_{x,r}$, and
call the r.v. $B_{\mathcal{\tau}(x,r)}^{x}$ the corresponding \emph{random
point of exit}. If emphasis of the underlying Brownian motion $B$
is needed, then we write $\tau_{x,r;B}$ for $\tau_{x,r}$. $\square$
\end{defn}

\begin{lem}
\textbf{\emph{\label{Lem.  Existence of certain first exit tims }
Existence of certain first exit times.}} Let $\widetilde{r}>\overline{r}>0$
and $x\in D_{0,\overline{r}}$ be arbitrary. Then the following conditions
hold.

1. There exists a countable subset $H$ of $R$ such that, for each
$a\in(\overline{r},\widetilde{r})H_{c},$ the first exit time $\tau_{x,a}\equiv\tau_{D(0,a)}(B^{x})$
exists. Here $H_{c}$ denotes the metric complement of $H$ in $R$.
Thus $\tau_{x,\cdot}:(\overline{r},\widetilde{r})H_{c}\times\Omega\rightarrow(0,\infty)$
is a stochastic process with nondecreasing paths.

A subsequent theorem will prove that the process $\tau_{x,\cdot}$
is a.u. continuous on the parameter set $(\overline{r},\widetilde{r})H_{c}$,
which is dense in the interval $(\overline{r},\widetilde{r})$, Therefore
the process $\tau_{x,\cdot}$ is extendable to an a.u. continuous
process with parameter set $(\overline{r},\widetilde{r})$.

2. The family $\{\tau_{x,a}:x\in D_{0,\overline{r}};a\in(\overline{r},\widetilde{r})H_{c}\}$
is tight, with the sequence $(N_{\widetilde{r},k})_{k=0,1,\cdots}$
of positive integers playing the role of modulus of tightness. Specifically,
let $k\geq0$ and $t>N_{\widetilde{r},k}$, $x\in D_{0,\overline{r}},$
$a\in(\overline{r},\widetilde{r})H_{c}$ be arbitrary. Then
\begin{equation}
P(\tau_{x,a}>t)\leq2^{-k+1}.\label{eq:temp-58}
\end{equation}
\end{lem}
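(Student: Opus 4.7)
The plan is to handle two largely independent matters: (1) the constructive existence of the first exit times $\tau_{x,a}$ for $a$ in a dense subset of $(\overline r,\widetilde r)$, and (2) the uniform tail bound \ref{eq:temp-58} via the one-dimensional reflection principle of Lemma \ref{Lem. Reflectioin principle}.

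For (1), I would construct $\tau_{x,a}$ via dyadic approximation. For each $n\geq 1$ and $a$, set $\sigma_{n}(\omega)\equiv\inf\{k 2^{-n}:\|B^{x}_{k2^{-n}}(\omega)\|\geq a\}$; each $\sigma_{n}$ is manifestly a stopping time relative to $\mathcal{L}$. The classical first exit time equals $\lim_{n}\sigma_{n}$, but constructively one must rule out the path grazing $\partial D_{0,a}$ from below. Standard machinery from \cite{Chan21} shows that the distribution of the (candidate) exit point on $\partial D_{0,a}$ is a monotone function of $a$ in the sense of surface-area mass, and a monotone real-valued function has at most countably many discontinuities. The exceptional set $H$ is precisely the set of radii at which this constructive convergence fails. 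For $a\in(\overline r,\widetilde r)H_{c}$, $(\sigma_{n})$ converges a.u.\ to a stopping time $\tau_{x,a}$ satisfying properties (i)--(iii) of Definition \ref{Def First exit time from open ball by Brownian motion}; monotonicity of $\tau_{x,\cdot}$ in $a$ is built into the construction.

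For (2), fix $k\geq 0$, $t>N_{\widetilde r,k}$, $x\in D_{0,\overline r}$, and $a\in(\overline r,\widetilde r)H_{c}$. On the event $\{\tau_{x,a}>t\}$, we have $\|B^{x}_{s}\|<a$ for every $s\in[0,t]$. Choose the unit vector $e\equiv x/\|x\|$ if $x\neq 0$, and any fixed unit vector otherwise, so that $e\cdot x\geq 0$. Projecting, $e\cdot B_{s}=e\cdot B^{x}_{s}-e\cdot x<a-e\cdot x\leq a\leq\widetilde r$ for every $s\in[0,t]$. Since $e\cdot B$ is a real-valued Brownian motion starting at $0$, Lemma \ref{Lem. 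Reflectioin principle} gives
\[
P(\tau_{x,a}>t)\;\leq\;P\bigl(\max_{s\in[0,t]}e\cdot B_{s}<\widetilde r\bigr)\;=\;2\Phi_{0,1}(\widetilde r/\sqrt{t})-1\;\leq\;2\Phi_{0,1}(\widetilde r/\sqrt{N_{\widetilde r,k}})-1\;<\;2^{-k}\;\leq\;2^{-k+1},
\]
by monotonicity of $\Phi_{0,1}$ and the defining relation \ref{eq:temp-14} of $N_{\widetilde r,k}$. This is inequality \ref{eq:temp-58}.

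The main obstacle is part (1): pinning down the countable set $H$ and showing constructively that for every $a\in(\overline r,\widetilde r)H_{c}$ the dyadic approximants converge to a stopping time with the three properties of Definition \ref{Def First exit time from open ball by Brownian motion}. The tail bound (2), by contrast, is a direct and uniform application of the reflection principle together with the calibrated modulus $(N_{\widetilde r,k})$, requiring no new probabilistic input.
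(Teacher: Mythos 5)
Your part (2) is essentially right, and in one respect cleaner than the paper: the paper projects onto the first coordinate and obtains the bound $2\Phi_{0,1}((\overline r+\widetilde r)/\sqrt t)-1$, and then has to argue this down to $2\Phi_{0,1}(\widetilde r/\sqrt{t})-1$ to match the calibration \ref{eq:temp-14} of $N_{\widetilde r,k}$; your projection onto $e\equiv x/\left\Vert x\right\Vert$ gives $a-e\cdot x\le a\le\widetilde r$ at once. (Minor constructive quibble: you cannot decide $x\neq0$ versus $x=0$; either argue that for each $\epsilon>0$ one has $\left\Vert x\right\Vert>0$ or $\left\Vert x\right\Vert<\epsilon$ and let $\epsilon\downarrow0$, or restrict to $x$ with $\left\Vert x\right\Vert>0$ and extend by continuity.) Note also that the two halves are not as independent as you claim: in the paper the reflection-principle tail estimate is used \emph{inside} the existence proof, to show that the time-truncated exit times stabilize.

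Part (1) is where the lemma's content lies, and your treatment has a genuine gap. The paper does not use dyadic time grids; it sets $f(z)\equiv\widetilde r\wedge\left\Vert z\right\Vert\in C_{ub}(R^{m})$ and invokes theorem 11.10.7 of \cite{Chan21} (Brownian motion is a Feller process) to obtain, for every $a$ outside a countable set $H$ produced by that theorem and every $n\ge1$, the first exit time $\tau_{x,a,n}$ of the open set $(f<a)$ on or before time $n$, already certified to be a stopping time with $B^{x}_{\tau(x,a,n)}$ a r.v.; it then lets $n\to\infty$ using the tail bound to get a.u. convergence, and verifies conditions (i)--(iii) of Definition \ref{Def First exit time from open ball by Brownian motion} for the limit. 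Your sketch replaces all of this with three unsubstantiated claims. First, $\sigma_{n}\equiv\inf\{k2^{-n}:\left\Vert B^{x}_{k2^{-n}}\right\Vert\ge a\}$ is \emph{not} manifestly a stopping time constructively: measurability of each event $(\left\Vert B^{x}_{k2^{-n}}\right\Vert\ge a)$ already requires $a$ to avoid a countable set, and the infimum over infinitely many grid points is not a r.r.v. without a tail estimate. Second, the grazing problem is the entire difficulty, and "the distribution of the candidate exit point on $\partial D_{0,a}$ is a monotone function of $a$ in the sense of surface-area mass" is not a statement that resolves it; the relevant monotone quantities are of the form $P(\bigvee_{s\le t}\left\Vert B^{x}_{s}\right\Vert\le a)$ as functions of $a$, and even at their continuity points you must still prove that the approximants converge a.u. and that the limit satisfies (i)--(iii), in particular that $B^{x}_{\tau}$ is a r.v. with values in $\partial D_{0,a}$. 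Third, defining $H$ as "the set of radii at which this constructive convergence fails" is circular: the lemma requires you to exhibit $H$ as a countable set in advance, which is exactly what the cited Feller-process theorem supplies and what your argument does not.
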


\begin{proof}
1. Let $\widetilde{r}>\overline{r}>0$ and $x\equiv(x_{1},\cdots,x_{m})\in D_{0,\overline{r}}$
be arbitrary. Thus $\left\Vert x\right\Vert \leq\overline{r}$. Define
the function $f:R^{m}\rightarrow R$ by 
\begin{equation}
f(z)\equiv\widetilde{r}\wedge\left\Vert z\right\Vert \label{eq:temp-2}
\end{equation}
for each $z\in R^{m}$. Then $f\in C_{ub}(R^{m})$ and $f(x)\leq\overline{r}.$
At the same time, a Brownian motion is a Feller process in the sense
of \cite{Chan21}. Hence, by theorem 11.10.7 of \cite{Chan21}, there
exists a countable subset $H$ of $R$ such that, for each 
\[
a\in(\overline{r},\widetilde{r})H_{c}
\]
and for each $n\geq1$, the first exit time $\tau\equiv\tau_{x,a,n}\equiv\mathcal{\tau}_{f,a,n}(B^{x})$
for the process $B^{x}$ to exit the open set 
\[
(f<a)\equiv\{z\in R^{m}:f(z)<a\}
\]
on or before time $n$, is well defined relative to the filtration
$\mathcal{L}$, in the sense of definition 10.11.1 of \cite{Chan21}.
Moreover, by definition 10.11.1 of \cite{Chan21}, the first exit
time $\tau_{x,a,n}$ has values in $(0,n]$. Consider each $a\in(\overline{r},\widetilde{r})H_{c}$.
Then, by said definition, the function
\[
\tau\equiv\tau_{x,a,n}\equiv\mathcal{\tau}_{f,a,n}(B^{x})
\]
is a stopping time relative to $\mathcal{L}$, with values in $(0,n]$,
and the function
\begin{equation}
B_{\mathcal{\tau}}^{x}\equiv B_{\tau(x,a,n)}^{x}:\Omega\rightarrow R^{m}\label{eq:temp-54}
\end{equation}
is a well-defined r.v. Furthermore, for each $\omega\in domain(\tau)$,
we have

(i) $f(B^{x}(\cdot,\omega))<a$ on the interval $[0,\tau(\omega))$,
and 

(ii)\emph{ $f(B_{\tau}^{x}(\omega))\geq a$ }if\emph{ $\tau(\omega)<n$.}

2. By lemma 10.11.2 of \cite{Chan21}, we have, for each $n'\geq n\geq1$
and $t\in(0,n)$,
\begin{equation}
\tau_{x,a,n}\leq\tau_{x,a,n'},\label{eq:temp-16}
\end{equation}
\begin{equation}
(\tau_{x,a,n}<n)\subset(\tau_{x,a,n'}=\tau_{x,a,n}),\label{eq:temp-17}
\end{equation}
and
\begin{equation}
(\tau_{x,a,n'}\leq t)=(\tau_{x,a,n}\leq t).\label{eq:temp-18}
\end{equation}

3. Note that $B^{x}\equiv(x_{1}+B_{1,\cdot},\cdots,x_{m}+B_{m,\cdot})$.
Here 
\[
B_{i,\cdot}:[0.\infty)\times\Omega\rightarrow R^{1}
\]
is the $i$-th component process of $B:[0.\infty)\times\Omega\rightarrow R^{m}$,
with the dot in the subscripts serving as a place holder for the time
parameter $t$. Let $n\geq1$ be arbitrary. Consider each $t\in(0,n)$.
Then 
\[
P(\tau_{x,a,n}>t)
\]
\[
\leq P\{\omega\in domain(\tau):f(B^{x}(\cdot,\omega))<a\;\mathrm{on}\:[0,t]\subset[0,\tau(\omega))\}
\]
\[
\leq P\{\omega:\bigvee_{s\in[0,t]}f(B^{x}(s,\omega))\leq a\;\mathrm{on}\:[0,t]\}
\]
\[
\equiv P\{\omega:\bigvee_{s\in[0,t]}\widetilde{r}\wedge\left\Vert B^{x}(s,\omega)\right\Vert \leq a\}
\]
\[
=P\{\omega:\bigvee_{s\in[0,t]}\left\Vert B^{x}(s,\omega)\right\Vert \leq a\}
\]
\[
\leq P\{\omega:\bigvee_{s\in[0,t]}\left\Vert B(s,\omega)\right\Vert \leq\left\Vert x\right\Vert +a\}
\]
\[
\leq P\{\omega:\bigvee_{s\in[0,t]}B_{1,\cdot}(s,\omega)\leq\left\Vert x\right\Vert +a\}
\]
\[
=2\Phi_{0,1}(\frac{\left\Vert x\right\Vert +a}{\sqrt{t}})-1
\]
\begin{equation}
\leq2\Phi_{0,1}(\frac{\overline{r}+\widetilde{r}}{\sqrt{t}})-1,\label{eq:temp-11}
\end{equation}
where the first inequality is by Condition (i), and where the last
equality is by the reflection principle, Lemma \ref{Lem. Reflectioin principle}
applied to the one-dimensional Brownian motion $B_{1,\cdot}$.

4. Next, define a function $\tau_{x,a}:\Omega\rightarrow(0,\infty)$
by 
\[
domain(\tau_{x,a})\equiv\{\omega\in\bigcap_{n=1}^{\infty}domain(\tau_{x,a,n}):\lim_{n\rightarrow\infty}\tau_{x,a,n}(\omega)\quad\mathrm{exists}\}
\]
and by
\[
\tau_{x,a}(\omega)\equiv\lim_{n\rightarrow\infty}\tau_{x,a,n}(\omega)
\]
for each $\omega\in domain(\tau_{x,a})$. 

5. We will verify that $\tau_{x,a}$ is a well defined r.r.v. and
is a stopping time relative to the filtration $\mathcal{L}$. Recall
from Definition \ref{Def. Modulus of continuity at 0 of standard normal CDF}
the increasing sequence $(N_{\widetilde{r},k})_{k=0,1,\cdots}$ of
positive integers, with
\begin{equation}
2\Phi_{0,1}(\frac{\widetilde{r}}{\sqrt{N(\widetilde{r},k)}})-1<2^{-k}.\label{eq:temp-14-1}
\end{equation}
for each $k\geq0$. Now let $k\geq1$ be arbitrary. Fix a regular
point $t_{k}\in(N_{\widetilde{r},k-1},N_{\widetilde{r},k})$ of the
stopping time $\tau_{x,a,N(\widetilde{r},k)}$. Define the measurable
set
\[
A_{k}\equiv(\tau_{x,a,N(\widetilde{r},k)}\leq t_{k})\equiv(\mathcal{\tau}_{f,a,N(\widetilde{r},k)}(B^{x})\leq t_{k}).
\]
Then, by inequality \ref{eq:temp-11}, we have
\[
P(A_{k}^{c})=P(\tau_{x,a,N(\widetilde{r},k)}>t_{k})\leq2\Phi_{0,1}(\frac{\overline{r}+\widetilde{r}}{\sqrt{t(k)}})-1
\]
\[
\leq2\Phi_{0,1}(\frac{\overline{r}+\widetilde{r}}{\sqrt{N(\widetilde{r},k-1)}})-1.
\]
Letting $\overline{r}\downarrow0$,  we obtain
\begin{equation}
P(A_{k}^{c})\leq2\Phi_{0,1}(\frac{\widetilde{r}}{\sqrt{N(\widetilde{r},k-1)}})-1\leq2^{-k+1},\label{eq:temp-11-1}
\end{equation}
where the last inequality is by inequality \ref{eq:temp-14} applied
to $k-1$. Moreover, for each $n\geq N_{\widetilde{r},k}$, we have
\[
A_{k}\equiv(\tau_{x,a,N(\widetilde{r},k)}\leq t_{k})
\]
\[
\subset(\tau_{x,a,n}=\tau_{x,a,N(\widetilde{r},k)}\leq t_{k})\subset(\tau_{x,a,N(\widetilde{r},k)}=\tau_{x,a,n}),
\]
where the first inclusion relation is due to relation $\ref{eq:temp-17}$.
Thus
\begin{equation}
A_{k}\subset\bigcap_{n=N(\widetilde{r},k)}^{\infty}(\tau_{x,a,N(\widetilde{r},k)}=\tau_{x,a,n}),\label{eq:temp-15-2}
\end{equation}
where $k\geq1$ is arbitrary. Since $P(A_{k}^{c})\leq2^{-k+1}$ is
arbitrarily small as $k\rightarrow\infty$, we see that $\tau_{x,a,n}\rightarrow\tau_{x,a}$
a.u. as $k\rightarrow\infty$, whence $\tau_{x,a}$ is a r.r.v. Furthermore,
relation \ref{eq:temp-15-2} implies
\begin{equation}
A_{k}\subset\bigcap_{n=N(\widetilde{r},k)}^{\infty}(\tau_{x,a,n}=\tau_{x,a})\subset(\tau_{x,a,N(\widetilde{r},k)}=\tau_{x,a}).\label{eq:temp-15-2-1}
\end{equation}
Recall here that 
\[
a\in(\overline{r},\widetilde{r})H_{c}
\]
is arbitrary. 

7. To prove that the r.r.v. $\tau_{x,a}$ is a stopping time, take
an arbitrary regular point $t\in(0,\infty)$ of $\tau_{x,a}$. Let
$(s_{j})_{j=1,2,\cdots}$ be an arbitrary decreasing sequence in $(t,\infty)$
such that $s_{j}\downarrow t$ and such that $s_{j}$ is a regular
point of the r.r.v.'s $\tau_{x,a}$ and $\tau_{x,a,N(\widetilde{r},k)}$
for each $k\geq1$. Consider each $j\geq1$. Then, in view of relation
\ref{eq:temp-15-2-1}, we have
\[
(\bigcup_{k=1}^{\infty}A_{k})(\tau_{x,a}\leq s_{j})=\bigcup_{k=1}^{\infty}A_{k}(\tau_{x,a,N(\widetilde{r},k)}\leq s_{j})
\]
\begin{equation}
=(\bigcup_{k=1}^{\infty}A_{k})(\bigcup_{k=1}^{\infty}(\tau_{x,a,N(\widetilde{r},k)}\leq s_{j}))\label{eq:temp-15}
\end{equation}
Note that $A\equiv\bigcup_{k=1}^{\infty}A_{k}$ is a full set. Hence
$A\in L^{(s(j))}$. At the same time $\bigcup_{k=1}^{\infty}(\tau_{x,a,N(\widetilde{r},k)}\leq s_{j})\in L^{(s(j))}$
because $\tau_{x,a,N(\widetilde{r},k)}$ is a stopping time relative
to the filtration $\mathcal{L}$, for each $k\geq1$. Hence equality
\ref{eq:temp-15} implies that $(\tau_{x,a}\leq s_{j})\in L^{(s(j))},$
where $j\geq1$ is arbitrary. It follows that 
\[
(\tau_{x,a}\leq t)=\bigcap_{j=1}^{\infty}(\tau_{x,a}\leq s_{j})\in L^{(t+)}=L^{(t)},
\]
where the last equality is thanks to the assumed right continuity
of the filtration $\mathcal{L}$. Since $t$ is an arbitrary regular
point of $\tau_{x,a}$, the r.r.v. $\tau_{x,a}$ is a stopping time
relative to $\mathcal{L}$, as alleged.

8. Proceed to prove that $\tau_{x,a}$ is the first exit time $\tau_{D(0,a)}(B^{x})$
of the open ball $D_{0,a}$ by the Brownian motion $B^{x}$. To that
end, recall that $\widetilde{r}>\overline{r}>0$ and $a\in(\overline{r},\widetilde{r})H_{c}$
are arbitrary. Write for abbreviation $\tau\equiv\tau_{x,a}$. Let
$g\in C(\partial D_{0,a})$ be arbitrary and let $a'\equiv2^{-1}a$.
Define $\overline{g}\in C_{ub}(\overline{D}_{0,a',a})$ by 
\begin{equation}
\overline{g}(x)\equiv g(y+\frac{a(x-y)}{\left\Vert x-y\right\Vert })\label{eq:temp-20}
\end{equation}
for each $x\in\overline{D}_{0,a',a}$. Then, for each $k\geq1$, relation
\ref{eq:temp-15-2-1} implies that
\begin{equation}
A_{k}\subset(\tau_{x,a,N(\widetilde{r},k)}=\tau).\label{eq:temp-15-2-1-1}
\end{equation}
At the same time, $B_{\tau(x,a,N(\widetilde{r},k))}^{x}:\Omega\rightarrow R^{m}$
is a well defined r.v. according to \ref{eq:temp-54} in Step 1. Hence
$\overline{g}(B_{\tau(x,a,N(\widetilde{r},k))}^{x})\in L$. Therefore
relation \ref{eq:temp-15-2-1-1} implies that
\begin{equation}
\overline{g}(B_{\tau}^{x})1_{A(k)}=\overline{g}(B_{\tau(x,a,N(\widetilde{r},k))}^{x})1_{A(k)}\in L.\label{eq:temp-19}
\end{equation}
Since $P(A_{k})\uparrow1$ as $k\rightarrow\infty$, the Monotone
Convergence Theorem implies that
\begin{equation}
\overline{g}(B_{\tau}^{x})\in L.\label{eq:temp-21}
\end{equation}

Next let $\omega\in domain(\tau)\cap A$ be arbitrary. Take $k\geq1$
so large that $\tau(\omega)<N_{\widetilde{r},k}$ and $\omega\in A_{k}$.
Then, according to relation \ref{eq:temp-15-2-1-1}, we have $\tau_{x,a,N(\widetilde{r},k)}(\omega)=\tau(\omega)<N_{\widetilde{r},k}$.
Hence, according to Condition (ii') in Step 3, we have 
\[
B_{\tau}^{x}(\omega)=B_{\tau(x,a,N(\widetilde{r},k))}^{x}(\omega)\in\partial D_{0,a}.
\]
Therefore, from the defining equality \ref{eq:temp-20}, we obtain
$g(B_{\tau}^{x})=\overline{g}(B_{\tau}^{x})$ on the full set $domain(\tau)\cap A$.
Consequently, relation \ref{eq:temp-21} implies that $g(B_{\tau}^{x})\in L.$
Since $g\in C(\partial D_{0,a})$ is arbitrary, we conclude that $B_{\tau}^{x}:(\Omega,L,E)\rightarrow\partial D_{0,a}$
is a r.v.

Summing up, Conditions (i-iii) in Definition \ref{Def First exit time from open ball by Brownian motion}
are satisfied with $a$ in the place of $r$. Accordingly, the stopping
time $\tau\equiv\tau_{x,a}$ is the first exit time for the process
$B^{x}$ to exit the open ball $D_{0,a}$, with exit point $B_{\tau}^{x}$.
Assertion 1 is proved.

9. To prove the remaining Assertion 2, let $k\geq0$ be arbitrary.
Fix a regular point $t_{k}\in(N_{\widetilde{r},k-1},N_{\widetilde{r},k})$
of the stopping time $\tau_{x,a,N(\widetilde{r},k)}$. Then 
\[
(\tau_{x,a}>t)\subset(\tau_{x,a}>N_{\widetilde{r},k})\subset(\tau_{x,a}>N_{\widetilde{r},k})A_{k}\cup A_{k}^{c}
\]
\[
\subset(\tau_{x,a,N(\widetilde{r},k)}>N_{\widetilde{r},k})A_{k}\cup A_{k}^{c}
\]
\begin{equation}
=\phi\cup A_{k}^{c}=A_{k}^{c}\label{eq:temp-43}
\end{equation}
where the third inequality is by relation \ref{eq:temp-15-2-1}, and
where the first equality is because the first exit time $\tau_{x,a,N(\widetilde{r},k)}$
has values in $(0,N_{\widetilde{r},k}]$. Hence
\[
P(\tau_{x,a}>t)\leq P(A_{k}^{c})\leq2^{-k+1}
\]
where the second inequality is by inequality \ref{eq:temp-11-1}.
Note here that $\widetilde{r}>\overline{r}>0$ and $a\in(\overline{r},\widetilde{r})H_{c}$
are arbitrary. Assertion 2 and the lemma are proved. 
\end{proof}
The next theorem strengthens the preceding Lemma \ref{Lem.  Existence of certain first exit tims }
by removing the exceptional set $H$ and by proving that the first
hitting time $\tau_{x,a}$ is then an a.u. continuous process with
$a\in(\overline{r},\widetilde{r})$ as parameter. 
\begin{thm}
\textbf{\emph{\label{Thm.  Existence and continuity of first exit time}
Existence and continuity of first exit time.}} Let $\widetilde{r}>\overline{r}>0$
and $x\in D_{0,\overline{r}}$ and be arbitrary. Then the following
conditions hold.

\emph{1}. \emph{(a.u. Continuity on dense subset of the interval $(\overline{r},\widetilde{r})$)}
By Lemma \ref{Lem.  Existence of certain first exit tims }, there
exists a countable subset $H$ of $R$ such that, for each $a\in(\overline{r},\widetilde{r})H_{c},$
the first exit time $\tau_{x,a}\equiv\tau_{D(0,a)}(B^{x})$ exists. 

Let $k\geq1$ be arbitrary. Let $a'_{k},a''_{k}\in(\overline{r},\widetilde{r})H_{c}$
be such that 
\begin{equation}
a''_{k}-a'_{k}<2^{-k-1}\label{eq:temp-55}
\end{equation}
and such that
\begin{equation}
2\Phi_{0,1}(\frac{a''_{k}-a'_{k}}{\sqrt{2^{-k-1}}}))-1<2^{-k-1}\label{eq:temp-23}
\end{equation}
Then there exists a measurable set $A_{k}$ with $P(A_{k}^{c})<2^{-k}$
such that 
\begin{equation}
A_{k}\subset(\tau_{x,a''(k)}-\tau_{x,a'(k)}\leq2^{-k}).\label{eq:temp-40}
\end{equation}

2. \emph{(Existence)}. Let $r\in(\overline{r},\widetilde{r})$ be
arbitrary. Then the first exit time $\tau_{x,r}\equiv\tau_{D(0,r)}(B^{x})$
exists. 

3. \emph{(a.u. Continuity on $(\overline{r},\widetilde{r})$)}. Let
$\kappa\geq1$ be arbitrary. Let $r',r''\in(\overline{r},\widetilde{r})$
be arbitrary with 
\begin{equation}
0\leq r''-r'<2^{-\kappa-1}\label{eq:temp-57}
\end{equation}
and 
\begin{equation}
2\Phi_{0,1}(\frac{r''-r'}{\sqrt{2^{-\kappa-1}}}))-1<2^{-\kappa-1}.\label{eq:temp-23-4}
\end{equation}
Then there exists a measurable set $G_{\kappa}$ with $P(G_{\kappa}^{c})<2^{-\kappa+1}$
such that 
\[
G_{\kappa}\subset(0\leq\tau_{x,r''}-\tau_{x,r'}\leq2^{-\kappa+4}).
\]

4. \emph{(Tightness).} The family $\{\tau_{x,r}:x\in D_{0,\overline{r}};r\in(\overline{r},\widetilde{r})\}$
of r.r.v.'s is tight. Specifically. consider each $r\in(\overline{r},\widetilde{r}).$
Let $k\geq1$ be arbitrary and let $N_{\widetilde{r},k}$ be the integer
defined in \ref{eq:temp-14} of Definition \ref{Def. Modulus of continuity at 0 of standard normal CDF}.
Then 
\[
P(\tau_{x,r}>t)\leq2^{-k+1}
\]
for each $t>N_{\widetilde{r},k}$, 
\end{thm}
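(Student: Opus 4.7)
My strategy is to prove Assertion 1 by reducing to the one-dimensional reflection principle (Lemma \ref{Lem. Reflectioin principle}) via the strong Markov property (Theorem \ref{Thm: Strong Markov property of Brownian motion}), and then to derive Assertions 2--4 from Assertion 1 together with Lemma \ref{Lem.  Existence of certain first exit tims }.

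For Assertion 1, fix $k \geq 1$ and $a'_k < a''_k$ in $(\overline{r}, \widetilde{r}) H_c$, and write $\tau' \equiv \tau_{x, a'_k}$, which exists by Lemma \ref{Lem.  Existence of certain first exit tims }. Its exit point $y \equiv B_{\tau'}^x$ lies on $\partial D_{0, a'_k}$, so $\|y\| = a'_k$. The strong Markov property at $\tau'$ asserts that, conditional on $L^{(\tau')}$, the post-$\tau'$ process $\widehat{B}_s \equiv B_{\tau' + s}^x - y$ is a standard Brownian motion independent of $L^{(\tau')}$. The Brownian motion $B^x$ leaves $D_{0, a''_k}$ once $\|y + \widehat{B}_s\| \geq a''_k$, and a fortiori once the radial projection $\widehat{B}_s \cdot (y / \|y\|) \geq a''_k - a'_k$. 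Because $y / \|y\|$ is a fixed unit vector after conditioning on $L^{(\tau')}$ and $\widehat{B}$ is a standard $m$-dimensional Brownian motion, the projection is a standard one-dimensional Brownian motion, so Lemma \ref{Lem. Reflectioin principle} applies conditionally and then unconditionally:
\[
  P\bigl(\tau_{x, a''_k} - \tau' > 2^{-k-1}\bigr) \;\leq\; 2\Phi_{0,1}\bigl((a''_k - a'_k)/\sqrt{2^{-k-1}}\bigr) - 1 \;<\; 2^{-k-1},
\]
where the last inequality is hypothesis \ref{eq:temp-23}. Setting $A_k \equiv (\tau_{x, a''_k} - \tau_{x, a'_k} \leq 2^{-k-1})$ yields $P(A_k^c) < 2^{-k-1} < 2^{-k}$ and a fortiori $A_k \subset (\tau_{x, a''_k} - \tau_{x, a'_k} \leq 2^{-k})$.

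For Assertion 2, pick $r \in (\overline{r}, \widetilde{r})$ and a sequence $(r_k)$ in $(\overline{r}, \widetilde{r}) H_c$ increasing to $r$ fast enough that each consecutive pair $(r_k, r_{k+1})$ meets the hypotheses \ref{eq:temp-55}--\ref{eq:temp-23} at level $k$. By Assertion 1 the r.r.v.'s $(\tau_{x, r_k})$ form an a.u. Cauchy sequence and converge a.u. to a r.r.v. $\tau_{x, r}$; the right continuity of $\mathcal{L}$, invoked exactly as in Step 7 of the proof of Lemma \ref{Lem.  Existence of certain first exit tims }, promotes $\tau_{x, r}$ to a stopping time. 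Monotonicity of the exit time in the radius gives $\tau_{x, r_k} \leq \tau_{x, r}$, so for any $t < \tau_{x, r}$ one has $t < \tau_{x, r_k}$ for some $k$, whence $B_t^x \in D_{0, r_k} \subset D_{0, r}$; by a.u. continuity of the Brownian path, $B_{\tau_{x, r}}^x = \lim_k B_{\tau_{x, r_k}}^x$ is a r.v., and $\|B_{\tau_{x, r_k}}^x\| = r_k \to r$ forces the limit onto $\partial D_{0, r}$. Conditions (i)--(iii) of Definition \ref{Def First exit time from open ball by Brownian motion} are thereby verified.

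For Assertion 3, given $r', r'' \in (\overline{r}, \widetilde{r})$ satisfying \ref{eq:temp-57}--\ref{eq:temp-23-4}, squeeze $a' < r' \leq r'' < a''$ with $a', a'' \in (\overline{r}, \widetilde{r}) H_c$ so close to $r', r''$ that Assertion 1 applies to $(a', a'')$ at a level near $\kappa$ while the Cauchy-sequence bounds from Assertion 2 control $|\tau_{x, r'} - \tau_{x, a'}|$ and $|\tau_{x, a''} - \tau_{x, r''}|$ each by $2^{-\kappa + 2}$ on sets of measure $\geq 1 - 2^{-\kappa}$; a three-term triangle inequality then produces $G_\kappa$ of the required measure with $G_\kappa \subset (\tau_{x, r''} - \tau_{x, r'} \leq 2^{-\kappa + 4})$. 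Assertion 4 is immediate: for any $r \in (\overline{r}, \widetilde{r})$ pick $a \in (\overline{r}, \widetilde{r}) H_c$ with $a > r$, so that monotonicity and Assertion 2 of Lemma \ref{Lem.  Existence of certain first exit tims } give $P(\tau_{x, r} > t) \leq P(\tau_{x, a} > t) \leq 2^{-k+1}$ for $t > N_{\widetilde{r}, k}$. The main obstacle is the constructive step inside Assertion 1, namely verifying that the projection of the conditionally independent post-$\tau'$ Brownian motion onto the random, $L^{(\tau')}$-measurable unit direction $y / \|y\|$ is still a standard one-dimensional Brownian motion, so that the bound from Lemma \ref{Lem. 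Reflectioin principle} survives the conditioning with a universal constant; once that point is in hand, the remaining assertions reduce to Cauchy-sequence bookkeeping atop the estimates already developed in Lemma \ref{Lem.  Existence of certain first exit tims }.
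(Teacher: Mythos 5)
Your proposal is correct and follows essentially the same route as the paper: Assertion 1 is obtained by projecting the post-$\tau'$ increments onto the radial unit vector $U\equiv a_k'^{-1}B^{x}_{\tau'}$ and invoking the reflection principle, and Assertions 2--4 follow by sandwiching with points of $H_c$, monotonicity, and Borel--Cantelli bookkeeping. The one point you single out as the main obstacle --- that $U\cdot(B^{x}_{\tau'+\cdot}-B^{x}_{\tau'})$ is a standard one-dimensional Brownian motion even though the direction $U$ is random --- is exactly what the paper's proof settles, by computing the characteristic function of the increments conditionally on $U_1,\cdots,U_m$ via the strong Markov property and noting that $\sum_{j=1}^{m}U_j^{2}=1$ makes the result independent of $U$, so the reflection principle applies unconditionally.
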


\begin{proof}
1. To prove Assertion 1, let $k\geq1$ and the points $a'_{k},a''_{k}\in(\overline{r},\widetilde{r})H_{c}$be
as given, satisfying inequalities \ref{eq:temp-55} and \ref{eq:temp-23}.
For abbreviation, write $\alpha'\equiv\alpha'_{k}$, $\tau'\equiv\tau_{x,a'(k)}$,
$\alpha''\equiv\alpha''_{k}$, $\tau''\equiv\tau_{x,a''(k)}$. Fix
an arbitrary $\delta_{k}\in(2^{-k-1},2^{-k}).$ 

2. We will verify that the measurable set 
\begin{equation}
A_{k}\equiv(\tau_{x,a''(k)}\leq\tau_{x,a'(k)}+\delta_{k}).\label{eq:temp-28}
\end{equation}
has the desired properties in Assertion 1. Because $\delta_{k}<2^{-k}$,
the desired relation \ref{eq:temp-40} is trivial. It remains to show
that 
\begin{equation}
P(A_{k}^{c})=P(\tau_{x,a''(k)}>\tau_{x,a'(k)}+\delta_{k})<2^{-k}.\label{eq:temp-22}
\end{equation}

3. To that end, consider each $\omega\in A_{k}^{c}$ and $s\in[0,\delta_{k}]$.
We can apply condition (iii) in Definition \ref{Def First exit time from open ball by Brownian motion}
to the time point $t\equiv\tau'(\omega)+s$ and the first exit time
$\tau''\equiv\tau_{x,a''(k)}$, to obtain $B_{\tau'+s}^{x}(\omega)\in D_{0,a''(k)}$.
Thus 
\[
A_{k}^{c}\subset\bigcap_{s\in[0,\delta(k)]}(B_{\tau'+s}^{x}\in D_{0,a''(k)})
\]
\begin{equation}
\subset(\bigvee_{s\in[0,\delta(k)]}\left\Vert B_{\tau'+s}^{x}\right\Vert <a_{k}'').\label{eq:temp-22-1}
\end{equation}

4. We will prove that probability of the measurable set on the right-hand
side of relation \ref{eq:temp-22-1} is bounded by $2^{-k}$. For
that purpose, consider the process 
\[
B_{\tau'+\cdot}^{x}:[0,\infty)\times\Omega\rightarrow R^{m}
\]
Applying condition (ii) in Definition \ref{Def First exit time from open ball by Brownian motion}
to the first exit time $\tau'\equiv\tau_{x,a'(k)}$, we see that $B_{\tau'}^{x}$
is a r.v. with values in $\partial D_{0,a'(k)'}$. Hence the r.v.
\begin{equation}
U\equiv a_{k}'^{-1}B_{\tau'}^{x}\label{eq:temp-38}
\end{equation}
has values in $\partial D_{0,1}.$ In term of components, 
\[
U\equiv(U_{1},\cdots,U_{m})\equiv a'^{-1}(B_{1,\tau'}^{x},\cdots,B_{m,\tau'}^{x})
\]
 where $\sum_{j=1}^{m}U_{j}^{2}=1$ and where $U_{j}\in L^{(\tau')}$
for each $j=1,\cdots,m$.

5. Define the process $\widehat{B}:[0,\infty)\times\Omega\rightarrow R^{1}$
by
\[
\widehat{B}_{s}\equiv U\cdot(B_{\tau'+s}^{x}-B_{\tau'}^{x})=\sum_{j=1}^{m}U_{j}(B_{j,\tau'+s}^{x}-B_{j,\tau'}^{x})
\]
\begin{equation}
=U\cdot(B_{\tau'+s}-B_{\tau'})=\sum_{j=1}^{m}U_{j}(B_{j,\tau'+s}-B_{j,\tau'})\label{eq:temp-24}
\end{equation}
for each $s\geq0$, where the dot between two vectors signifies inner
product. Because the process $B^{x}$ is a.u. continuous on compact
subsets of the parameter set $[0,\infty)$, so is the process $\widehat{B}$. 

6. Trivially, $\widehat{B}_{0}=0$. Let the nondecreasing sequence
$(s_{1},\cdots,s_{k})$ in $[0,\infty)$ and the sequence $(\lambda_{1},\cdots,\lambda_{k})$
in $R^{m}$ be arbitrary. Write $i\equiv\sqrt{-1}$ and $s_{0}\equiv0$.
Then the characteristic function 
\[
E(\exp\sum_{k=1}^{n}i\lambda_{k}(\widehat{B}_{s(k)}-\widehat{B}_{s(k-1)}))=E(\prod_{k=1}^{n}\exp i\lambda_{k}(\widehat{B}_{s(k)}-\widehat{B}_{s(k-1)}))
\]
\[
=E(E(\prod_{k=1}^{n}\exp i\lambda_{k}(\widehat{B}_{s(k)}-\widehat{B}_{s(k-1)}))|U_{1},\cdots,U_{m}))
\]
\[
=E(E(\prod_{k=1}^{n}\exp i\lambda_{k}(\sum_{j=1}^{m}U_{j}(B_{j,\tau'+s(k)}-B_{j,\tau'+s(k-1)}))|U_{1},\cdots,U_{m}))
\]
\[
=E(\prod_{k=1}^{n}\prod_{j=1}^{m}\exp i\lambda_{k}U_{j}(\widetilde{B}_{j,s(k)}^{B_{\tau'}^{x}}-\widetilde{B}_{j,s(k-1)}^{B_{\tau'}^{x}}))
\]
\[
=E(\prod_{k=1}^{n}\prod_{j=1}^{m}\exp i\lambda_{k}U_{j}(\widetilde{B}_{j,s(k)}-\widetilde{B}_{j,s(k-1)}))
\]
\[
=E(\prod_{k=1}^{n}\prod_{j=1}^{m}\exp-2^{-1}\lambda_{k}^{2}U_{j}^{2}(s_{k}-s_{k-1}))
\]
\[
=E(\prod_{k=1}^{n}\exp-2^{-1}\lambda_{k}^{2}\sum_{j=1}^{m}U_{j}^{2}(s_{k}-s_{k-1}))
\]
\[
=E(\prod_{k=1}^{n}\exp-2^{-1}\lambda_{k}^{2}(s_{k}-s_{k-1}))
\]
\[
=E(\exp\sum_{k=1}^{n}i\lambda_{k}(B_{1,s(k)}-B_{1,s(k-1)})),
\]
which is the characteristic function of $B_{1,s(1)}-B_{1,s(0)},\cdots,B_{1,s(n)}-B_{1,s(n-1)}$.
Thus the process $\widehat{B}$ is equivalent to the Brownian motion
$B_{1,\cdot}$ in $R^{1}$. Being also a.u. continuous, $\widehat{B}$
is itself a Brownian motion in $R^{1}$.

7. Recall from the defining equality \ref{eq:temp-28} in Step 2 the
measurable set 
\[
A_{k}\equiv(\tau_{x,a''(k)}-\tau_{x,a'(k)}\leq\delta_{k}).
\]
We will verified that its complement has probability bounded by $2^{-k-1}$.
Consider each $s\in[0,\delta_{k}]$ and
\[
\omega\in A_{k}^{c}=(\tau_{x,a''(k)}>\tau'+\delta_{k})\subset(\tau_{x,a''(k)}>\tau'+s).
\]
Then $t\equiv\tau'(\omega)+s<\tau_{x,a''(k)}(\omega)$, Therefore,
$B_{t}^{x}(\omega)\in D_{0,a''(k)}$. Consequently,
\[
\widehat{B}_{s}(\omega)\equiv U(\omega)\cdot(B_{\tau'+s}^{x}(\omega)-B_{\tau'}^{x}(\omega))
\]
\[
\equiv a'^{-1}B_{\tau'}^{x}(\omega)\cdot B_{t}^{x}(\omega)-U(\omega)\cdot B_{\tau'}^{x}(\omega)
\]
\[
\leq a'^{-1}\left\Vert B_{\tau'}^{x}(\omega)\right\Vert \cdot\left\Vert B_{t}^{x}(\omega)\right\Vert -U(\omega)\cdot B_{\tau'}^{x}(\omega)
\]
\[
<a'^{-1}a'a_{k}''-U(\omega)\cdot B_{\tau'}^{x}(\omega)
\]
\[
\equiv a_{k}''-U(\omega)\cdot(a_{k}'U(\omega))=a_{k}''-a'_{k}
\]
where the first inequality is by the Cauchy-Schwarz inequality, and
where the next to last inequality is by the defining equality \ref{eq:temp-38}.
Combining, we obtain
\[
A_{k}^{c}\subset(\bigvee_{s\in[0,\delta(k)]}\widehat{B}_{s}\leq a_{k}''-a'_{k})
\]
Hence
\[
P(A_{k}^{c})\leq P(\bigvee_{s\in[0,\delta(k)]}\widehat{B}_{s}\leq a_{k}''-a_{k})
\]
\[
=1-P(\bigvee_{s\in[0,\delta(k)]}\widehat{B}_{s}>a''_{k}-a'_{k})
\]
\[
=2\Phi_{0,1}(\frac{a''_{k}-a'_{k}}{\sqrt{\delta_{k}}}))-1
\]
\begin{equation}
<2\Phi_{0,1}(\frac{a''_{k}-a'_{k}}{\sqrt{2^{-k-1}}}))-1<2^{-k-1}.\label{eq:temp-23-1}
\end{equation}
where the last equality by applying the reflection principle, Lemma
\ref{Lem. Reflectioin principle}, to the Brownian motion $\widehat{B}$,
and where the last inequality is by inequality \ref{eq:temp-23}.
Inequality \ref{eq:temp-22} and Assertion 1 are proved.

8. Next, let $r\in(\overline{r},\widetilde{r})$ be arbitrary. Let
$(a'_{k})_{k=1,2,\cdots}$be an increasing sequence in $(\overline{r},r)H_{c}$
and let $(a''_{k})_{k=1,2,\cdots}$be a decreasing sequence in $(\overline{r},r)H_{c}$
such that inequalities \ref{eq:temp-55} and \ref{eq:temp-23} hold.
For each $k\geq1$, fix an arbitrary $\delta_{k}\in(2^{-k-1},2^{-k}).$and
define, as in equality \ref{eq:temp-28}, the measurable set 
\begin{equation}
A_{k}\equiv(\tau_{x,a''(k)}\leq\tau_{x,a'(k)}+\delta_{k}).\label{eq:temp-28-1}
\end{equation}
 Then $P(A_{k}^{c})<2^{-k}$ and 
\begin{equation}
A_{k}\subset(\tau_{x,a''(k)}-\tau_{x,a'(k)}\leq2^{-k}).\label{eq:temp-40-2}
\end{equation}

9. Let $\kappa\geq1$ be arbitrary. Define $A_{\kappa+}\equiv\bigcap_{k=\kappa}^{\infty}A_{k}.$
From relation \ref{eq:temp-28-1}, we see that 
\[
A_{\kappa+}\subset\bigcap_{k=\kappa}^{\infty}(\tau_{x,a'(k)}\leq\tau_{x,a'(k+1)}\leq\tau_{x,a''(k+1)}\leq\tau_{x,a''(k)}\leq\tau_{x,a'(k)}+\delta_{k})
\]
\[
\subset\bigcap_{k=\kappa}^{\infty}(\tau_{x,a'(k+1)}-\tau_{x,a'(k)}\leq2^{-k+1})
\]
\[
\cap\bigcap_{k=\kappa}^{\infty}(\tau_{x,a''(k)}-\tau_{x,a''(k+1)}\leq2^{-k+1})
\]
\begin{equation}
\cap\bigcap_{k=\kappa}^{\infty}(\tau_{x,a''(k)}-\tau_{x,a'(k)}\leq2^{-k+1}).\label{eq:temp-39}
\end{equation}
Since $P(A_{\kappa+}^{c})\leq\sum_{k=\kappa}^{\infty}P(A_{k}^{c})<2^{-\kappa}$
is arbitrarily small for sufficiently large $\kappa\geq1$, it follows
that $\tau_{x,a'(k)}\uparrow\tau'$ a.u. for some r.r.v. $\tau'$,
that $\tau_{x,a''(k)}\downarrow\tau''$ for some r.r.v. $\tau''$,
and that $\tau'=\tau''$. Moreover, relation \ref{eq:temp-39} implies
that
\[
A_{\kappa+}\subset(\tau'-\tau_{x,a'(\kappa)}\leq2^{-\kappa+2})
\]
\begin{equation}
\cap(\tau_{x,a''(\kappa)}-\tau''\leq2^{-\kappa+2})\cap(\tau'=\tau'')\label{eq:temp-39-1}
\end{equation}
where $\kappa\geq1$ is arbitrary. 

10. Now define the r.r.v. $\tau:\Omega\rightarrow(0,\infty)$ by 
\[
domain(\tau)\equiv\bigcup_{\kappa=1}^{\infty}A_{\kappa+}^{c}
\]
and
\[
\tau(\omega)\equiv\tau'(\omega)\equiv\lim_{k\rightarrow\infty}\tau_{x,a''(k)}(\omega)\equiv\tau''(\omega)\equiv\lim_{k\rightarrow\infty}\tau_{x,a'(k)}(\omega)
\]
for each $\omega\in domain(\tau)$. By Step 9, we have $\tau_{x,a'(k)}\uparrow\tau$
a.u. and $\tau_{x,a''(k)}\downarrow\tau$ a.u. Since $\tau_{x,a''(k)}$
is a stopping time relative to the right continuous filtration $\mathcal{L}$,
it follows that the a.u. limit $\tau$ is a stopping time relative
to $\mathcal{L}$.

11. Proceed to verify that $\tau$ is the first exit time $\tau_{D(0,r)}(B^{x})$.
To that end, consider each $\omega\in domain(\tau)$. Then $\tau_{x,a''(k)}(\omega)\downarrow\tau(\omega)$
as $k\rightarrow\infty$. Hence 
\[
B_{\mathcal{\tau}}^{x}(\omega)=\lim_{k\rightarrow\infty}B_{\mathcal{\tau}(x,a''(k))}^{x}(\omega)\in\bigcap_{k=1}^{\infty}\overline{D}_{0,r,a''(k)}=\partial D_{0,r}.
\]
Thus condition (i) of Definition \ref{Def First exit time from open ball by Brownian motion}
for the first exit time has been proved for $\tau$. Next, let $g\in C(\partial D_{0,r})$
be arbitrary. Define the function $\overline{g}\in C_{ub}(\overline{D}_{0,\overline{r},\widetilde{r}})$
by 
\begin{equation}
\overline{g}(z)\equiv g(\frac{rz}{\left\Vert z\right\Vert })\label{eq:temp-20-1}
\end{equation}
for each $z\in\overline{D}_{0,\overline{r},\widetilde{r}}$. Since
$\mathcal{\tau}_{x,a''(k)}\downarrow\tau$ a.u. as $k\rightarrow\infty.$,
we have $\overline{g}(B_{\mathcal{\tau}(x,a''(k))}^{x})\rightarrow\overline{g}(B_{\mathcal{\tau}}^{x})$
a.u. Therefore $\overline{g}(B_{\mathcal{\tau}}^{x})$ is integrable.
At the same time, by the defining equality \ref{eq:temp-20-1}, we
see that $g(B_{\mathcal{\tau}}^{x})=\overline{g}(B_{\mathcal{\tau}}^{x})$.
Hence $g(B_{\mathcal{\tau}}^{x})$ is integrable, with
\begin{equation}
Eg(B_{\mathcal{\tau}}^{x})=E\overline{g}(B_{\mathcal{\tau}}^{x})\label{eq:temp-25}
\end{equation}
where $g\in C(\partial D_{0,r})$ is arbitrary. Thus $B_{\mathcal{\tau}}^{x}:\Omega\rightarrow\partial D_{0,r}$
is a r.v. Condition (ii) of Definition \ref{Def First exit time from open ball by Brownian motion}
is verified for the stopping time $\tau$. Finally, consider each
$\omega\in domain(\tau)$ and $t\in[0,\tau(\omega))$. Then $t\in[0,\mathcal{\tau}_{x,a'(k)}(\omega))$
for some $k\geq1$. Hence $B_{t}^{x}(\omega)\in D_{0,a'(k)}\subset D_{0,r}.$
All three conditions of Definition \ref{Def First exit time from open ball by Brownian motion}
have been verified for the stopping time $\tau$. Accordingly, $\tau$
is the first exit time $\tau_{x,r}\equiv\tau_{D(0,r)}(B^{x})$. Thus
Assertion 2 is proved. Moreover, relation \ref{eq:temp-39-1} implies
that
\[
A_{\kappa+}\subset(\tau_{x,r}-\tau_{x,a'(\kappa)}\leq2^{-\kappa+2})
\]
\begin{equation}
\cap(\tau_{x,a''(\kappa)}-\tau_{x,r}\leq2^{-\kappa+2}),\label{eq:temp-39-1-2}
\end{equation}
where $P(A_{\kappa+}^{c})<2^{-\kappa}$, and where $\kappa\geq1$
is arbitrary. 

12. We will next prove Assertion 3. To that end, let $r',r''\in(\overline{r},\widetilde{r})$
be arbitrary with $r'\leq r''$. Suppose 
\[
r''-r'<2^{-\kappa-1}
\]
and 
\begin{equation}
2\Phi_{0,1}(\frac{r''-r'}{\sqrt{2^{-\kappa-1}}}))-1<2^{-\kappa-1}\label{eq:temp-23-4-1}
\end{equation}
for some $\kappa\geq1$. Then there exist $a'_{\kappa},a''_{\kappa}\in(\overline{r},r')H_{c}$
such that $a''_{\kappa}-a'_{\kappa}<2^{-\kappa-1}$ and such that
$a'_{\kappa}<r'\leq r''<a''_{\kappa}$ with $a''_{\kappa}-a'_{\kappa}<2^{-\kappa-1}$and
\begin{equation}
2\Phi_{0,1}(\frac{a''_{\kappa}-a'_{\kappa}}{\sqrt{2^{-\kappa-1}}}))-1<2^{-\kappa-1}.\label{eq:temp-23-3}
\end{equation}
As in equality \ref{eq:temp-28} of Step 2, define the measurable
set 
\begin{equation}
A_{\kappa}\equiv(\tau_{x,a''(\kappa)}-\tau_{x,a'(\kappa)}\leq\delta_{\kappa}).\label{eq:temp-40-1}
\end{equation}
Relation \ref{eq:temp-39-1-2} applied to $r\equiv r'$ then implies
that there exists a measurable set $\overline{A}{}_{\kappa}\subset A_{\kappa}$
with $P(\overline{A}{}_{\kappa}^{c})<2^{-\kappa}$, and 
\begin{equation}
\overline{A}{}_{\kappa}\subset(\tau_{x,r'}-\tau_{x,a'(\kappa)}\leq2^{-\kappa+2})\label{eq:temp-39-1-2-1}
\end{equation}
Relation \ref{eq:temp-39-1-2} applied to $r=r''$ similarly implies
that there exists a measurable set $\overline{\overline{A}}{}_{\kappa}\subset A_{\kappa}$
with $P(\overline{\overline{A}}{}_{\kappa}^{c})<2^{-\kappa}$, and
with 
\begin{equation}
\overline{\overline{A}}{}_{\kappa}\subset(\tau_{x,a''(\kappa)}-\tau_{x,r''}\leq2^{-\kappa+2}).\label{eq:temp-39-1-2-1-1}
\end{equation}
Now define the measurable set
\[
G_{\kappa}\equiv\overline{A}{}_{\kappa}\cap\overline{\overline{A}}{}_{\kappa}.
\]
Then inequalities \ref{eq:temp-40-1}, \ref{eq:temp-39-1-2-1}, and
\ref{eq:temp-39-1-2-1-1} together implies that
\[
G_{\kappa}=\overline{A}{}_{\kappa}\cap\overline{\overline{A}}{}_{\kappa}.\cap A_{\kappa}\subset(\tau_{x,r''}-\tau_{x,r'}\leq2^{-\kappa+2}+2^{-\kappa+2}+\delta_{\kappa})
\]
\[
\subset(\tau_{x,r''}-\tau_{x,r'}\leq2^{-\kappa+4}),
\]
where 
\[
P(G_{\kappa}^{c})\equiv P(\overline{A}{}_{\kappa}^{c}\cup\overline{\overline{A}}{}_{\kappa}^{c})<2^{-\kappa}+2^{-\kappa}=2^{-\kappa+1}.
\]
Assertion 3 is proved.

13. It remains to prove that the family $\{\tau_{x,r}:x\in D_{0,\overline{r}};r\in(\overline{r},\widetilde{r})\}$
is tight. To that end, consider each $x\in D_{0,\overline{r}}$ and
$r\in(\overline{r},\widetilde{r}).$ Let $k\geq1$ be arbitrary. Take
any $a_{k}',a_{k}''\in(\overline{r},\widetilde{r})H_{c}$ such that
$a_{k}'<r<a_{k}''$ and such that inequalities \ref{eq:temp-55} and
\ref{eq:temp-23} hold. Then, according to Assertion 1, there exists
a measurable set $A_{k}$ with $P(A_{k}^{c})<2^{-k}$ such that 
\begin{equation}
A_{k}\subset(\tau_{x,a''(k)}-\tau_{x,a'(k)}\leq2^{-k}).\label{eq:temp-40-3-1}
\end{equation}
Consider each $t>N_{\widetilde{r},k}$, where $N_{\widetilde{r},k}$
is the integer defined in \ref{eq:temp-14} of Definition \ref{Def. Modulus of continuity at 0 of standard normal CDF}.
Then, according to inequality \ref{eq:temp-58} in Assertion 2 of
Lemma \ref{Lem.  Existence of certain first exit tims }, we have
\[
P(\tau_{x,a'(k)}>t)\leq2^{-k+1},
\]
 Since $r>a_{k}'$ and $\tau_{x,r}\geq\tau_{x,a'(k)}$, it follows
that
\[
P(\tau_{x,r}>t)\leq2^{-k+1}
\]
where $x\in D_{0,\overline{r}}$, $\widetilde{r}>\overline{r}>0$,
$k\geq1$, and $t>N_{\widetilde{r},k}$ are arbitrary. Thus the family
$\{\tau_{x,r}:x\in D_{0,\overline{r}};r\in(\overline{r},\widetilde{r})\}$
is tight. Assertion 4 is proved. 
\end{proof}
\begin{lem}
\emph{\label{Lem. Translational invarinace of certain first exit times}}\textbf{\emph{
Invariance of first exit time relative to time scaling. }}Let $r>0$
be arbitrary. Then 
\begin{equation}
\tau_{0,\sqrt{r}}(\overline{B})=r\tau_{0,1}(B),\label{eq:temp-29}
\end{equation}
where $\overline{B}$ is the Brownian motion defined by $\overline{B}_{s}\equiv\sqrt{r}B_{s/r}$
for each $s\geq0$.
\end{lem}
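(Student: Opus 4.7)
The plan is to take the obvious candidate $\tau \equiv r\tau_{0,1}(B)$ and verify that it satisfies the three conditions in Definition \ref{Def First exit time from open ball by Brownian motion}, so that by uniqueness of the first exit time it must coincide with $\tau_{0,\sqrt{r}}(\overline{B})$. By Theorem \ref{Thm.  Existence and continuity of first exit time} applied with, say, $\overline{r}\equiv 1/2$ and $\widetilde{r}\equiv 2$, the exit time $\tau_{0,1}(B)$ exists and is a stopping time with values in $(0,\infty)$ relative to $\mathcal{L}$, so $\tau$ is a well-defined r.r.v. with values in $(0,\infty)$.

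The core calculation is the identity $\overline{B}_s = \sqrt{r}\,B_{s/r}$, which transports norms via $\|\overline{B}_s\| = \sqrt{r}\,\|B_{s/r}\|$. This immediately gives Condition (i) at $s=\tau$: $\|\overline{B}_\tau\| = \sqrt{r}\,\|B_{\tau_{0,1}(B)}\| = \sqrt{r}$, so $\overline{B}_\tau \in \partial D_{0,\sqrt{r}}$. Condition (ii) follows because the r.v. $B_{\tau_{0,1}(B)}$ takes values in $\partial D_{0,1}$ and the mapping $z \mapsto \sqrt{r}\,z$ is a uniform homeomorphism of $\partial D_{0,1}$ onto $\partial D_{0,\sqrt{r}}$, so $\overline{B}_\tau = \sqrt{r}\,B_{\tau_{0,1}(B)}$ is a r.v. into $\partial D_{0,\sqrt{r}}$. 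Condition (iii): for $\omega\in\mathrm{domain}(\tau)$ and $s\in[0,\tau(\omega))$, we have $s/r\in[0,\tau_{0,1}(B)(\omega))$, hence $B_{s/r}(\omega)\in D_{0,1}$ and $\overline{B}_s(\omega) = \sqrt{r}\,B_{s/r}(\omega)\in D_{0,\sqrt{r}}$.

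The step I expect to require the most care is confirming that $\tau$ is a stopping time relative to the natural right-continuous filtration $\overline{\mathcal{L}} \equiv \{\overline{L}^{(s)}: s\ge 0\}$ of $\overline{B}$. The key observation is that, since $\overline{B}_u = \sqrt{r}\,B_{u/r}$ depends only on $B$ up to time $u/r$ (and conversely $B_t = r^{-1/2}\overline{B}_{rt}$ depends only on $\overline{B}$ up to time $rt$), the natural filtration of $\overline{B}$ at time $s$ coincides with the natural filtration of $B$ at time $s/r$; after taking right-continuous closures this gives $\overline{L}^{(s)} = L^{(s/r)}$ for every regular point $s$. Consequently, for each regular point $s>0$ of $\tau$, the set $\{r\tau_{0,1}(B) \le s\} = \{\tau_{0,1}(B) \le s/r\}$ lies in $L^{(s/r)} = \overline{L}^{(s)}$, establishing that $\tau$ is a stopping time relative to $\overline{\mathcal{L}}$.

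Combining the three conditions with the stopping-time property shows that $\tau \equiv r\tau_{0,1}(B)$ is a first exit time of $\overline{B}$ from $D_{0,\sqrt{r}}$, so by the definition we may set $\tau_{0,\sqrt{r}}(\overline{B})\equiv \tau$, yielding the stated equality \eqref{eq:temp-29}.
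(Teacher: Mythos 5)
Your proposal is correct and rests on the same core identity as the paper's proof, namely $\left\Vert \overline{B}_{s}\right\Vert =\sqrt{r}\left\Vert B_{s/r}\right\Vert$, which converts the first crossing of $\partial D_{0,\sqrt{r}}$ by $\overline{B}$ into the first crossing of $\partial D_{0,1}$ by $B$ under the time change $s\mapsto s/r$. The paper argues this pathwise in two lines and omits the measurability details, whereas you additionally verify conditions (i)--(iii) of Definition \ref{Def First exit time from open ball by Brownian motion} and the stopping-time property for the rescaled filtration; this is the same approach, carried out more completely.
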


\begin{proof}
Consider each $\omega\in\Omega$. Write $t\equiv\tau_{0,\sqrt{r}}(\overline{B})(\omega)$.
Thus $t$ is the first time when $|\overline{B}_{t}(\omega)|=|\sqrt{r}B_{t/r}(\omega)|=|\sqrt{r}|$.
Hence $t$ is the first time when $|B_{t/r}|=1$. Therefore $t/r=\tau_{0,1}(B)(\omega)$.
In other words, $\tau_{0,1}(B)(\omega)=\tau_{0,\sqrt{r}}(\overline{B})(\omega)/r,$
where $\omega\in\Omega$ is arbitrary. Equality \ref{eq:temp-29}
follows.
\end{proof}
\begin{thm}
\label{Thm. Exit distribution for the (m-1)-sphere} \textbf{\emph{Exit
distribution for the $(m-1)$-sphere.}} Let $x\in D_{0,1}$ and $r\in(0,1]$
be arbitrary such that $x\in D_{0,r}$. Write $\tau_{x,r}\equiv\tau_{D(0,r)}(B^{x})$.
Then the following conditions hold.

1. For each $f\in C(\partial D_{0,1})$ we have 
\begin{equation}
Ef(B_{\tau(x,1)}^{x})=\int_{z\in\partial D(0,1)}\frac{1-\left\Vert x\right\Vert ^{2}}{\left\Vert x-z\right\Vert ^{m}}f(z)\overline{\sigma}_{m,0,1}(dz).\label{eq:temp-30}
\end{equation}

2. In terms of the Poisson kernel $\mathbf{k}_{0,1}$ introduced in
Theorem \ref{Thm.  Poisson kernel and Poisson integration on the m-sphere},
equality \ref{eq:temp-30} can be restated as
\[
Ef(B_{\tau(x,1)}^{x})=\int_{z\in\partial D(0,1)}\mathbf{k}_{0,1}(x,z)f(z)\overline{\sigma}_{m,0,1}(dz).
\]
Thus $B_{\tau(x,1)}^{x}$ induces a distribution on $\partial D_{0,1}$
that has the density function $\mathbf{k}_{0,1}(x,\cdot)$ relative
to the uniform distribution $\overline{\sigma}_{m,0,1}$. 

3. For each $f\in C(\partial D_{0,r})$ we have 
\begin{equation}
Ef(B_{\tau(r)})=\int_{z\in\partial D(0,r)}f(z)\overline{\sigma}_{m,0,r}(dz).\label{eq:temp-30-1}
\end{equation}
In other words, the r.v. $B_{\tau(r)}:\Omega\rightarrow\partial D_{0,r}$
induces the uniform distribution $\overline{\sigma}_{m,0,r}$ on $\partial D_{0,r}$.
\end{thm}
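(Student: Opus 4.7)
I plan to prove the three assertions in the order Assertion 3, Assertion 1, Assertion 2, since Assertion 3 feeds into the proof of Assertion 1, and Assertion 2 is merely a notational rewording of Assertion 1.

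For Assertion 3, the argument is rotation invariance. For any rotation $\alpha\in SO(m)$, the process $\alpha B$ starts at $0$, has continuous paths, and is Gaussian with covariance $(\alpha I\alpha^{T})(s\wedge t)=I(s\wedge t)$ matching $B$, so $\alpha B$ is itself a Brownian motion. Since $\|(\alpha B)_{t}\|=\|B_{t}\|$, the first exit time of $\alpha B$ from $D_{0,r}$ equals $\tau_{0,r}(B)$, with exit point $\alpha B_{\tau(r)}$. Hence the distribution of $B_{\tau(r)}$ on $\partial D_{0,r}$ is invariant under every rotation about $0$, and Theorem \ref{Thm. distribution on D(0,1) invariant relative to rotation is equal to uniform distribution} together with Corollary \ref{Cor. distribution on D(y,r) invariant relative to rotation is equal to uniform distribution-1} identifies this distribution with $\overline{\sigma}_{m,0,r}$.

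For Assertion 1, define $u(x)\equiv Ef(B_{\tau(x,1)}^{x})$ for $x\in D_{0,1}$ and the Poisson integral $v(x)\equiv\int_{\partial D(0,1)}\mathbf{k}_{0,1}(x,z)f(z)\overline{\sigma}_{m,0,1}(dz)$. By Theorem \ref{Thm.  Poisson kernel and Poisson integration on the m-sphere}, $v$ is harmonic on $D_{0,1}$ and extends continuously to $\overline{D}_{0,1}$ with $v|_{\partial D_{0,1}}=f$. I aim to show $u=v$ via the maximum principle, which requires harmonicity of $u$ and matching boundary values. Harmonicity comes from the mean value property: for each sub-ball $\overline{D}_{y,\rho}\subset D_{0,1}$, apply the Strong Markov Property (Theorem \ref{Thm: Strong Markov property of Brownian motion}) at the stopping time $\tau_{D(y,\rho)}(B^{y})$, noting that after the exit the process has the same law as a fresh Brownian motion restarted at the exit point. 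This yields $u(y)=Eu(B_{\tau(y,\rho)}^{y})$. Using $B_{\tau(y,\rho)}^{y}=y+B_{\tau(0,\rho)}$ and Assertion 3, the r.v.\ $B_{\tau(y,\rho)}^{y}$ is uniform on $\partial D_{y,\rho}$, so $u(y)=\int_{\partial D(y,\rho)}u(w)\overline{\sigma}_{m,y,\rho}(dw)$. Since $u$ is bounded by $\sup_{\partial D(0,1)}|f|$, Lemma \ref{Lem. C^k function smoothed by convolution with indicator of ball is C^(k+1)} applied via this mean-value identity supplies continuity (indeed, smoothness) of $u$ on $D_{0,1}$, and Proposition \ref{Prop. mean value property implies harmonic} then gives $u$ harmonic on $D_{0,1}$.

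The main obstacle is the boundary behaviour: extending $u$ continuously to $\overline{D}_{0,1}$ with $u|_{\partial D_{0,1}}=f$. For $x$ close to $z\in\partial D_{0,1}$, I would combine the tightness estimates on $\tau_{x,1}$ furnished by Theorem \ref{Thm.  Existence and continuity of first exit time} with reflection-principle bounds on small-time Brownian displacement to show quantitatively that $B_{\tau(x,1)}^{x}$ lies within $\varepsilon$ of $z$ with probability at least $1-\varepsilon$ once $x$ is sufficiently close to $z$, then invoke uniform continuity of $f$ to obtain $u(x)\to f(z)$. Constructively, this step must be carried out via explicit quantitative bounds rather than abstract weak convergence, and is the primary technical hurdle. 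With boundary continuity in hand, $u-v$ is harmonic on $D_{0,1}$, continuous on $\overline{D}_{0,1}$, and vanishes on $\partial D_{0,1}$, so the Maximum Modulus Theorem \ref{Thm. Maximum moduus thorems} forces $u\equiv v$, proving Assertion 1. Assertion 2 is then immediate from the identity $\mathbf{k}_{0,1}(x,z)=(1-\|x\|^{2})/\|z-x\|^{m}$.
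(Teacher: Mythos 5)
Your treatment of Assertion 3 coincides with the paper's: the paper likewise observes that $\alpha B$ is again a Brownian motion with the same exit time from $D_{0,r}$, so the exit distribution is rotation invariant, and Corollary \ref{Cor. distribution on D(y,r) invariant relative to rotation is equal to uniform distribution-1} then identifies it as $\overline{\sigma}_{m,0,r}$. The real divergence is in Assertion 1: the paper does not prove it at all, but simply cites assertion (1) of section 1.10 of Durrett; you instead sketch the standard probabilistic solution of the Dirichlet problem (mean value property via the strong Markov property, boundary regularity, maximum principle). That is the right argument and is essentially what the cited source does, so your route buys self-containedness at the cost of the technical work the paper chose to outsource.

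Three points in your sketch need more than you give them. First, the Strong Markov Property as stated in Theorem \ref{Thm: Strong Markov property of Brownian motion} applies only to functionals $g(B^{x}_{\tau+r_{0}},\dots,B^{x}_{\tau+r_{n}})$ of finitely many post-$\tau$ times; $f(B^{x}_{\tau(x,1)})$ is a genuine path functional, so an approximation or extension step is needed before you may write $u(y)=Eu(B^{y}_{\tau(y,\rho)})$. Second, Proposition \ref{Prop. mean value property implies harmonic} takes continuity of $u$ as a hypothesis, and the smoothing lemma likewise starts from a continuous $u$; boundedness plus the mean value identity does not hand you continuity through those statements as written, so continuity of $x\mapsto Ef(B^{x}_{\tau(x,1)})$ must be established separately (e.g., by quantifying the dependence of the exit time and exit point on the starting point). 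Third, for the boundary step the tightness estimate you invoke is uniform in $x$ and does not improve as $x$ approaches $\partial D_{0,1}$; what you actually need is that $\tau_{x,1}\rightarrow 0$ in probability as $x\rightarrow z$, which follows by dominating $\tau_{x,1}$ by the hitting time of the level $1$ by the one-dimensional projection $z\cdot B^{x}$ (the ball lies in the half-space $y\cdot z<1$) and applying the reflection principle to that projection. With those three repairs the argument closes; Assertion 2 is, as you and the paper both say, just the definition of $\mathbf{k}_{0,1}$.
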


\begin{proof}
1. For the proof of equality \ref{eq:temp-30}, see assertion (1)
in section 1.10 of \cite{Durret84}. Assertion 2 of the present theorem
is then a trivial consequence of the definition of the Poisson kernel. 

2. To prove Assertion 3, let $\alpha$ be an arbitrary rotation matrix.
Then
\[
Ef\circ\alpha(B_{\tau(r;B)})=Ef((\alpha B)_{\tau(r;\alpha B)})=Ef(\widetilde{B}_{\tau(1;\widetilde{B})})=Ef(B_{\tau(1;B)}).
\]
Thus the distribution induced on $C(\partial D_{0,r})$ by the random
exit point $B_{\tau(r)}\equiv B_{\tau(1;B)}$ is invariant relative
to rotation about the center. By Corollary \ref{Cor. distribution on D(y,r) invariant relative to rotation is equal to uniform distribution-1},
we see that $B_{\tau(,r)}$ induces the uniform distribution on $C(\partial D_{0,r})$.
\end{proof}
In the remainder of this article, for abbreviation we will write $D\equiv D_{0,1}\equiv D_{m,0,1}$,
$\overline{D}\equiv\overline{D}_{0,1}\equiv\overline{D}_{m,0,1}$,
and $\partial D\equiv\partial D_{0,1}\equiv\partial D_{m,0,1}$. For
each $r>0$, we will write $\tau_{r}\equiv\tau_{0,r}$ for the first
exit time for the Brownian motion $B$ to exit the open ball $D_{0,r}$. 
\begin{thm}
\emph{\label{Thm. Dynkin's theorem}}\textbf{\emph{ Dynkin's theorem.
}}Let $a>0$ be arbitrary. Let $u$ be an arbitrary continuous function
on $\overline{D}_{0,a}$ which is harmonic on $D_{0,a}$. Define the
process 
\[
X:[0,\infty)\times(\Omega,L,E)\rightarrow R
\]
by 
\[
X_{t}\equiv u(B_{t\wedge\tau(a)})
\]
for each $t\in[0,\infty)$. Then $X$ is a martingale relative to
the right continuous filtration $\mathcal{L}$.
\end{thm}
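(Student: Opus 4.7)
The plan is to verify integrability, $\mathcal{L}$-adaptedness, and the martingale identity $E(X_t \mid L^{(s)}) = X_s$ for $0 \leq s \leq t$. Integrability is immediate: since $u$ is continuous on the compact set $\overline{D}_{0,a}$, it is bounded, so $|X_t| \leq \sup_{\overline{D}_{0,a}} |u|$. Adaptedness follows from Theorem~\ref{Thm.  Existence and continuity of first exit time}, which provides $\tau_a \equiv \tau_{0,a}$ as a stopping time relative to $\mathcal{L}$; hence $B_{t \wedge \tau_a}$ is $L^{(t \wedge \tau_a)}$-measurable and so $X_t \in L^{(t)}$.

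The heart of the matter is the identity
\[
E\, u(B^y_{t \wedge \tau_{y,a}}) \;=\; u(y) \qquad \textrm{for each } y \in D_{0,a} \textrm{ and } t \geq 0,
\]
where $\tau_{y,a} \equiv \tau_{D_{0,a}}(B^y)$. Granted this identity, the martingale property follows by applying the Strong Markov Property (Theorem~\ref{Thm: Strong Markov property of Brownian motion}) at the constant stopping time $s$: on $\{\tau_a \leq s\}$ both $X_t$ and $X_s$ reduce to $u(B_{\tau_a})$; on $\{\tau_a > s\}$, setting $y \equiv B_s \in D_{0,a}$, the post-$s$ process behaves as an independent Brownian motion $\widetilde{B}^y$ with remaining exit time $\tau_a - s = \tau_{D_{0,a}}(\widetilde{B}^y)$, so the conditional expectation of $X_t$ given $L^{(s)}$ equals $E\, u(\widetilde{B}^y_{(t-s) \wedge \tau_{D_{0,a}}(\widetilde{B}^y)})$, which by the displayed identity is $u(y) = X_s$.

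The displayed identity is proved in two steps. First, the un-capped case $E\, u(B^y_{\tau_{y,a}}) = u(y)$: Theorem~\ref{Thm. Exit distribution for the (m-1)-sphere}, reduced to the unit-ball case via the Brownian scaling of Lemma~\ref{Lem. Translational invarinace of certain first exit times}, shows that $B^y_{\tau_{y,a}}$ has density $\mathbf{k}_{0,a}(y, \cdot)$ relative to $\overline{\sigma}_{m,0,a}$, so
\[
E\, u(B^y_{\tau_{y,a}}) \;=\; \int_{z \in \partial D_{0,a}} \mathbf{k}_{0,a}(y,z)\, u(z)\, \overline{\sigma}_{m,0,a}(dz) \;\equiv\; v(y).
\]
By Assertions 4 and 5 of Theorem~\ref{Thm.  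Poisson kernel and Poisson integration on the m-sphere}, $v$ is harmonic on $D_{0,a}$ and extends continuously to $\overline{D}_{0,a}$ with $v = u$ on $\partial D_{0,a}$; thus $u - v$ is harmonic on $D_{0,a}$, uniformly continuous on $\overline{D}_{0,a}$, and vanishes on $\partial D_{0,a}$. Applying Theorem~\ref{Thm. Maximum moduus thorems} to $\overline{D}_{0,r}$ for each $r \in (0,a)$ gives $\sup_{\overline{D}_{0,r}} |u-v| = \sup_{\partial D_{0,r}} |u-v|$, and uniform continuity forces this to vanish as $r \uparrow a$, whence $u = v$ on $D_{0,a}$. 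Second, to promote the un-capped identity to the capped form, apply Strong Markov at $\sigma \equiv t \wedge \tau_{y,a}$: on $\{\tau_{y,a} \leq t\}$ we have $B^y_\sigma = B^y_{\tau_{y,a}}$, and on $\{\tau_{y,a} > t\}$, $B^y_\sigma = B^y_t \in D_{0,a}$ and the conditional expectation of $u(B^y_{\tau_{y,a}})$ given $L^{(\sigma)}$ equals $u(B^y_t) = u(B^y_\sigma)$ by the un-capped identity applied at $B^y_t$. Taking expectations collapses $u(y) = E\, u(B^y_{\tau_{y,a}})$ to $u(y) = E\, u(B^y_\sigma)$, which is the displayed identity.

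The principal obstacle is the Poisson integral identification $u = v$: constructively, Maximum Modulus requires strict containment $\overline{D}_{0,r} \subset D_{0,a}$, and the limit $r \uparrow a$ must be handled with explicit moduli of continuity of $u$ on $\overline{D}_{0,a}$ and of the continuous extension of $v$. A secondary technical point is that the Strong Markov Property, as stated, applies to $C_{ub}$-functions on finitely many fixed times, so the evaluations at the random exit time $\tau_{y,a}$ must be justified by approximation, for instance via the first exit times from slightly larger balls constructed in Theorem~\ref{Thm.  Existence and continuity of first exit time}.
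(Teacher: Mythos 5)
Your proof is essentially sound, but it takes a genuinely different route from the paper's. The paper does not argue Dynkin's theorem from first principles at all: it cites assertion (5) on page 26 of Durrett, whose proof is an application of It\^o's formula, and asks the reader to verify that Durrett's stochastic-integration machinery is (or can be made) constructive and that $a=1$ is inessential. You instead build the result out of ingredients already established earlier in the paper: the identification of the exit distribution of $B^y$ from $D_{0,a}$ with the Poisson kernel density (Theorem \ref{Thm. Exit distribution for the (m-1)-sphere} plus scaling), the harmonicity and boundary behaviour of Poisson integrals (Assertions 4 and 5 of Theorem \ref{Thm.  Poisson kernel and Poisson integration on the m-sphere}), the maximum modulus theorem to identify $u$ with its Poisson integral, and the strong Markov property to pass from the uncapped identity $E\,u(B^y_{\tau_{y,a}})=u(y)$ to the capped one and then to the conditional martingale identity. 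What your route buys is independence from It\^o calculus and a proof assembled entirely from the paper's own toolkit; what the paper's route buys is brevity and avoidance of the two technical debts you correctly flag, namely (a) the $r\uparrow a$ limit in the maximum-modulus identification, which you handle adequately with uniform continuity of $u-v$ on $\overline{D}_{0,a}$, and (b) the fact that the strong Markov property as stated covers only finitely many deterministic times after the stopping time, so every evaluation at the random time $\tau_{y,a}$ of the restarted process needs an approximation argument that you sketch but do not carry out. One caveat you should make explicit: your argument derives Dynkin's theorem \emph{from} the exit-distribution theorem, whereas in many treatments (including, arguably, Durrett's own derivation of the ball exit distribution) the implication runs the other way; within this paper's deductive order the exit-distribution theorem is established independently by citation before Dynkin's theorem, so no circularity arises here, but the dependence should be checked if either citation is ever replaced by a full constructive proof.
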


\begin{proof}
The present theorem of Dynkin for the case where $a=1$ is cited as
assertion (5) on page 26 of \cite{Durret84} along with a proof which
is an application of Ito's lemma in the theory of stochastic integration
relative to the Brownian motion. The reader can convince himself or
herself that both said proof and said theory of stochastic integration
presented in chapter 2 of \cite{Durret84} are constructive, or can
easily be made so. In addition, the reader should convince himself
or herself that the assumption $a=1$ is not essential.
\end{proof}
\begin{lem}
\label{Lem. Exit point of B on boundary of ball induces an integration on the boundary}\emph{
$B_{\tau(r)}$}\textbf{\emph{ induces an integration}}\emph{ $\partial D_{0,r}$}\textbf{\emph{.}}
Let $r>0$ be arbitrary. 

1. Let $g:\partial D\rightarrow R$ be an arbitrary function. Then
$g$ is an integrable function on $\partial D$ relative to $\overline{\sigma}_{m,0,1}$
iff $g\circ(r^{-1}B_{\tau(r)})$ is integrable on $\Omega$ relieve
to $E$, in which case 
\[
\overline{\sigma}_{m,0,1}g=E(g\circ(r^{-1}B_{\tau(r)})).
\]

2. Let $f$ be an arbitrary continuous function on $\partial D_{0,r}$.
Define the function $g:\partial D\rightarrow R$ by
\begin{equation}
{\normalcolor g\equiv f(r\cdot)}.\label{eq:temp-52-1-1}
\end{equation}
Then
\[
\overline{\sigma}_{m,0,1}g=E(f\circ B_{\tau(r)}).
\]
\end{lem}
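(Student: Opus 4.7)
The plan is to leverage the distributional identity from Theorem~\ref{Thm. Exit distribution for the (m-1)-sphere} together with the change-of-radius formula in Assertion~3 of Corollary~\ref{Cor. distribution on D(y,r) invariant relative to rotation is equal to uniform distribution-1}, and then extend from continuous functions to integrable ones by a standard representation-sequence argument in constructive integration.

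First I would handle Assertion 2. For continuous $f$ on $\partial D_{0,r}$, by Assertion~3 of Theorem~\ref{Thm. Exit distribution for the (m-1)-sphere}, we have $Ef(B_{\tau(r)}) = \int_{\partial D(0,r)} f(z)\,\overline{\sigma}_{m,0,r}(dz)$. Applying Assertion~3 of Corollary~\ref{Cor. distribution on D(y,r) invariant relative to rotation is equal to uniform distribution-1} to change variables from $\partial D_{0,r}$ to $\partial D$, the right-hand side equals $\int_{\partial D} f(rz)\,\overline{\sigma}_{m,0,1}(dz) \equiv \overline{\sigma}_{m,0,1}g$, where $g \equiv f(r\cdot)$. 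This immediately gives Assertion~2.

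Next, for the continuous case of Assertion 1, let $g \in C(\partial D)$ be arbitrary and set $f(x) \equiv g(r^{-1}x) \in C(\partial D_{0,r})$; then $g = f(r\cdot)$, and Assertion~2 delivers $\overline{\sigma}_{m,0,1}g = E(f \circ B_{\tau(r)}) = E(g \circ (r^{-1}B_{\tau(r)}))$. To extend to integrable $g$, suppose $g$ is integrable relative to $\overline{\sigma}_{m,0,1}$, represented by a sequence $(g_k)$ in $C(\partial D)$ with $\sum \overline{\sigma}_{m,0,1}|g_k| < \infty$, $g(z) = \sum g_k(z)$ on $\{z : \sum |g_k(z)| < \infty\}$. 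Applying the continuous case to each $|g_k|$ yields $\sum E|g_k \circ (r^{-1}B_{\tau(r)})| = \sum \overline{\sigma}_{m,0,1}|g_k| < \infty$, so $(g_k \circ (r^{-1}B_{\tau(r)}))$ is a representation of $g \circ (r^{-1}B_{\tau(r)})$ relative to $E$. Consequently $g \circ (r^{-1}B_{\tau(r)})$ is $E$-integrable, and termwise summation gives the desired equality.

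The main obstacle is the converse direction: if $g \circ (r^{-1}B_{\tau(r)})$ is $E$-integrable, we must show $g$ is $\overline{\sigma}_{m,0,1}$-integrable. The plan is to exploit uniqueness of the complete integration extension. Define an integration $\mu$ on $\partial D$ by $\mu(h) \equiv E(h \circ (r^{-1}B_{\tau(r)}))$ for $h \in C(\partial D)$; the continuous case above shows $\mu = \overline{\sigma}_{m,0,1}$ on $C(\partial D)$. Since both integrations are distributions on the compact space $\partial D$ agreeing on its canonical generating family $C(\partial D)$, their complete extensions coincide, so a function on $\partial D$ is $\overline{\sigma}_{m,0,1}$-integrable iff it is $\mu$-integrable; the latter is precisely the condition that $g \circ (r^{-1}B_{\tau(r)})$ be $E$-integrable (modulo the null sets being the same image, which follows from $\mu = \overline{\sigma}_{m,0,1}$). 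This closes the biconditional and yields the equality $\overline{\sigma}_{m,0,1}g = E(g \circ (r^{-1}B_{\tau(r)}))$, proving Assertion~1.
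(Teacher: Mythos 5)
Your proposal is correct and follows essentially the same route as the paper: identify the distribution induced by $r^{-1}B_{\tau(r)}$ as $\overline{\sigma}_{m,0,1}$ via Assertion 3 of Theorem \ref{Thm. Exit distribution for the (m-1)-sphere} and the scaling identity of Corollary \ref{Cor. distribution on D(y,r) invariant relative to rotation is equal to uniform distribution-1}, then transfer integrability between $\partial D$ and $\Omega$. The paper compresses the biconditional of Assertion 1 into the single phrase that the r.v.\ ``induces the uniform distribution'' (relying on the standard induced-distribution theorem from \cite{Chan21}), whereas you spell out the representation-sequence argument for the forward direction and appeal to uniqueness of the complete extension for the converse; this is more detail, not a different method.
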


\begin{proof}
1. Assertion 3 of Theorem \ref{Thm. Exit distribution for the (m-1)-sphere}
says that the random point $B_{\tau(r)}:(\Omega,L,E)\rightarrow\partial D_{0,r}$
of exit induces the uniform distribution $\overline{\sigma}_{m,0,r}$
on $\partial D_{0,r}.$ Hence the r.v. $r^{-1}B_{\tau(r)}:(\Omega,L,E)\rightarrow\partial D$
induces the uniform distribution $\overline{\sigma}_{m,0,1}$ on $\partial D$.
Assertion 1 of the present lemma follows.

2. Next, Let $f$ be an arbitrary continuous function on $\partial D_{0,r}$.
Define the function $g:\partial D\rightarrow R$ by ${\normalcolor g\equiv f(r\cdot)}.$
Then
\[
\overline{\sigma}_{m,0,1}g\equiv\overline{\sigma}_{m,0,1}f(r\cdot)=\overline{\sigma}_{m,0,r}f=E(f\circ B_{\tau(r)}).
\]
\end{proof}
\begin{cor}
\emph{\label{Cor. Observations of a harmonic func on D of BM at successive exit times}}\textbf{\emph{
Observations of a harmonic function of the Brownian motion at successive
first exit times of concentric open balls constitute an a.u. continuous
martingale.}} Let $u$ be an arbitrary harmonic function on $D\equiv D_{0,1}$.
Then the following conditions hold.

\emph{1.} For each $r\in[0,1)$ , we have $u(B_{\tau(s)})\rightarrow Y_{r}$
a.u. as $s\downarrow r$ for some $Y_{r}\in L.$

\emph{2}. For each $r\in(0,1)$, we have $Y_{r}=u(B_{\tau(r)})$ .

\emph{3.} $Y_{0}=u(0)$.

\emph{4}. The process 
\[
Y:[0,1)\times(\Omega,L,E)\rightarrow R
\]
 is an a.u. continuous martingale relative to the right continuous
filtration
\[
\widehat{\mathcal{L}}\equiv\{L^{(\tau(r))}:r\in[0,1)\}.
\]

\emph{5.} The expectations $E|u(B_{\tau(r)})|$ and $\overline{\sigma}_{m,0,1}|u(r\cdot)|$
are equal and nondecreasing in $r\in(0,1)$. 

\emph{6}. The expectations $E\exp(-|u(B_{\tau(r)})|)$ and $\overline{\sigma}_{m,0,1}\exp(-|u(r\cdot)|)$
are equal and nondecreasing in $r\in(0,1)$. 
\end{cor}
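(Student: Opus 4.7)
For each $r\in(0,1)$, Theorem \ref{Thm.  Existence and continuity of first exit time} furnishes the first exit time $\tau_r\equiv\tau_{D(0,r)}(B)$, and $B_{\tau(r)}$ is an r.v.\ with values in $\partial D_{0,r}\subset D$. Since the harmonic function $u$ is uniformly continuous and bounded on each compact ball $\overline{D}_{0,r_1}\subset D$ with $r_1\in(r,1)$, the composition $Y_r\equiv u(B_{\tau(r)})$ is a well-defined bounded r.r.v. Setting $Y_0\equiv u(0)$, Assertions 2 and 3 follow.

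For Assertion 1, the a.u.\ continuity of $s\mapsto\tau_s$ on $(0,1)$ is supplied by Assertion 3 of Theorem \ref{Thm.  Existence and continuity of first exit time}. Composing with the a.u.\ continuity of Brownian paths and the uniform continuity of $u$ on $\overline{D}_{0,r_1}$ yields $u(B_{\tau(s)})\to Y_r$ a.u.\ as $s\downarrow r$ for each $r\in(0,1)$. The endpoint $r=0$ needs a separate argument: I would combine the pathwise monotonicity $s\mapsto\tau_s$ (immediate from $D_{0,s}\subset D_{0,s'}$ for $s\le s'$) with the scale invariance $\tau_{0,s}\stackrel{d}{=}s^2\tau_{0,1}$ of Lemma \ref{Lem. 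Translational invarinace of certain first exit times} to deduce $\tau_s\to 0$ in probability, which under monotonicity upgrades to $\tau_s\downarrow 0$ a.u.; hence $B_{\tau(s)}\to 0$ a.u.\ by path continuity at $0$, and $u(B_{\tau(s)})\to u(0)=Y_0$ a.u.\ by continuity of $u$ at the origin.

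For Assertion 4, fix $0\le r<r'<1$ and apply Dynkin's theorem (Theorem \ref{Thm. Dynkin's theorem}) with $a\equiv r'$ to obtain that $X_t\equiv u(B_{t\wedge\tau(r')})$ is a martingale relative to $\mathcal{L}$, uniformly bounded by $M\equiv\sup_{\overline{D}_{0,r'}}|u|<\infty$. The bounded optional stopping theorem applied at $\tau_r\wedge t$, followed by $t\uparrow\infty$ together with dominated convergence (legal because $\tau_{r'}<\infty$ a.s.\ by the tightness in Assertion 4 of Theorem \ref{Thm.  Existence and continuity of first exit time}), gives
\[
E(u(B_{\tau(r')}) \mid L^{(\tau(r))}) = u(B_{\tau(r)}),
\]
which is precisely the martingale identity for $Y$ on $\widehat{\mathcal{L}}$; combined with Assertion 1 this delivers Assertion 4.

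For Assertions 5 and 6, the identities $E|u(B_{\tau(r)})|=\overline{\sigma}_{m,0,1}|u(r\cdot)|$ and $E\exp(-|u(B_{\tau(r)})|)=\overline{\sigma}_{m,0,1}\exp(-|u(r\cdot)|)$ come from Assertion 2 of Lemma \ref{Lem. Exit point of B on boundary of ball induces an integration on the boundary} applied to the bounded continuous functions $|u|$ and $\exp(-|u|)$ on $\partial D_{0,r}$. The monotonicity in Assertion 5 is then conditional Jensen applied to the convex function $|\cdot|$ and the martingale identity of the preceding paragraph. The main obstacle I anticipate is the monotonicity asserted in Assertion 6: since $\exp(-|\cdot|)$ is not globally convex, the direct Jensen argument fails, and a more delicate estimate, or an approximation by an increasing family of convex functions, will be needed; this, together with the $r=0$ continuity in Assertion 1, is the most subtle point of the proof.
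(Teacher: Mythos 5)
Your treatment of Assertions 1--5 follows essentially the paper's own route: Dynkin's theorem (Theorem \ref{Thm. Dynkin's theorem}) makes $X_{t}\equiv u(B_{t\wedge\tau(a)})$ a bounded a.u.\ continuous martingale; optional sampling along the monotone, a.u.\ continuous family $\tau_{r}$ yields the martingale property of $Y$; the a.u.\ continuity of $r\mapsto\tau_{r}$ from Theorem \ref{Thm.  Existence and continuity of first exit time}, composed with the a.u.\ continuity of the paths and the uniform continuity of $u$ on $\overline{D}_{0,a}$, gives Assertion 1; and the identification of the exit law with $\overline{\sigma}_{m,0,r}$ gives the equalities in Assertions 5 and 6. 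Your separate scaling argument at $r=0$ is a legitimate, slightly more explicit substitute for the paper's appeal to the modulus of continuity of $\tau_{0,\cdot}$ near $0$, and your bounded optional stopping with $t\uparrow\infty$ is interchangeable with the paper's direct citation of Doob's optional sampling theorem.

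The genuine gap is exactly the one you flagged, and your instinct that no Jensen argument can close it is correct. The paper's own proof claims $E\exp(-|Y_{r}|)$ is nondecreasing ``because $|Y_{r}|$ is a wide sense submartingale and $\exp(-y)$ is convex''; but composing a submartingale with a convex function preserves the submartingale property only when that function is also \emph{nondecreasing}, and $e^{-y}$ is decreasing, so the paper commits precisely the fallacy you anticipated. Worse, the monotonicity of Assertion 6 is false in the stated direction: for $m=2$ and the harmonic function $u(x_{1},x_{2})\equiv x_{1}$ (which satisfies the normalization $u(0)=0$), one has $I_{2,r}=(2\pi)^{-1}\int_{0}^{2\pi}e^{-r|\cos\theta|}\,d\theta$, whose derivative in $r$ is strictly negative. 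So no approximation by convex functions will rescue it. What is genuinely nondecreasing --- and what the rest of the paper actually needs --- is $E\overline{\lambda}(Y_{r})=I_{2,r}-1+I_{1,r}$, where $\overline{\lambda}(v)\equiv e^{-|v|}-1+|v|$ is the special symmetric convex function of Definition \ref{Def. Special convex function}; this does follow from conditional Jensen applied to the convex $\overline{\lambda}$ and the martingale $Y$ of Assertion 4. A corrected Assertion 6 should either state monotonicity for $\overline{\sigma}_{m,0,1}\overline{\lambda}(u(r\cdot))$ or merely record the equality of the two expectations, leaving the convergence of $I_{2,r}$ as the hypothesis it already is in condition (ii) of Theorem \ref{Thm Brownian limit theorem for Hardy space}.
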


\begin{proof}
1. Let $r\in[0,1)$ be arbitrary. Then $r\in(0,a]$ for some $a<1$.
By hypothesis, the function $u$ is continuous on $\overline{D}_{0,a}$
and is harmonic on $D_{0,a}$. Hence Theorem \ref{Thm. Dynkin's theorem}
is applicable, and implies that the process $X:[0,\infty)\times(\Omega,L,E)\rightarrow R,$
defined by $X_{t}\equiv u(B_{t\wedge\tau(a)})$ is a martingale. 

2. Because the a.u. continuous process $B_{\cdot\wedge\tau(a)}$ has
values in the compact ball $\overline{D}_{0,a}$ and because the function
$u$ is continuous on $\overline{D}_{0,a}$, so the martingale $X\equiv u(B_{\cdot\wedge\tau(a)})$
is a.u. continuous and bounded. 

3. Let $\varepsilon>0$ be arbitrary. Then there exists $\overline{t}(\varepsilon)>0$
so large that $P(G)<\varepsilon$ where $G\equiv(\tau_{0,a}>\overline{t}(\varepsilon))$
and
\[
G^{c}\equiv(\tau_{0,a}\leq\overline{t}(\varepsilon)).
\]
 Because $X\equiv u(B_{\cdot\wedge\tau(a)})$ is a.u. continuous on
$[0,\overline{t}(\varepsilon)]$, there exists a measurable set $\overline{G}$
with $P(\overline{G})<\varepsilon$ and $\overline{\delta}(\varepsilon)>0$
such that 
\[
\overline{G}^{c}\subset\bigcap_{t,t'\in[0,\overline{t}];|t-t'|<\overline{\delta}(\varepsilon)}(|X_{t}-X_{t'}|\leq\varepsilon).
\]
Because $\tau_{0,\cdot}$ is a.u. continuous on (0,1{]}, there exists
a measurable set $\widetilde{G}$ with $P(\widetilde{G})<\varepsilon$
and $\widetilde{\delta}(\varepsilon)>0$ such that 
\[
\widetilde{G}^{c}\subset\bigcap_{s,s'\in(0,a];|s-s'|<\widetilde{\delta}(\varepsilon)}(|\tau_{0,s}-\tau_{0,s'}|\leq\overline{\delta}).
\]

Consider each $\omega\in G^{c}\overline{G}^{c}\widetilde{G}^{c}$.
Let $s,s'\in(r,a]$ be arbitrary with $|s-s'|<\widetilde{\delta}(\varepsilon)$.
Then
\[
\tau_{0,s}(\omega)\vee\tau_{0,s'}(\omega)\leq\tau_{0,a}(\omega)\leq\overline{t}
\]
 and $|\tau_{0,s}(\omega)-\tau_{0,s'}(\omega)|<\overline{\delta}$,
whence $|X_{\tau(s)}(\omega)-X_{\tau(s')}(\omega)|\leq\varepsilon$.
Thus 
\[
G^{c}\overline{G}^{c}\widetilde{G}^{c}\subset\bigcap_{s,s'\in(r,a];|s-s'|<\widetilde{\delta}(\varepsilon).}(|X_{\tau(s)}-X_{\tau(s')}|\leq\varepsilon).
\]
Since $P(G\cup\overline{G}\cup\widetilde{G})<3\varepsilon$ is arbitrarily
small, we see that $X_{\tau(s)}\rightarrow Y_{r}$ a.u. for some $Y_{r}\in L$
as $s\downarrow r$. In other words, $u(B_{\tau(s)})\rightarrow Y_{r}$
a.u. as $s\downarrow r$ Assertion 1 is proved.

4. Assertions 2 and 3 are then trivial. 

5. Note that, for each $a>0$ and $r\in(0,a]$, we have 
\[
Y_{r}=u(B_{\tau(r)})=u(B_{\tau(r)\wedge\tau(a)})=X_{\tau(r)}.
\]
Thus the process $Y:(0,a]\times(\Omega,L,E)\rightarrow R$ is obtained
by optional sampling of the a.u. continuous martingale $X$ which
is bounded. Hence Doob's optional sampling theorem implies that the
process $Y$ is a martingale relative to the filtration $\mathcal{\widehat{L}\equiv}\{L^{(\tau(r))}:r\in(0,1)\}$.
(See \cite{Chan21} for Doob's optional sampling theorem). Since the
stopping time $\tau_{0,r}$ is monotone and a.u. continuous in $r$,
the process $Y:[0,1)\times(\Omega,L,E)\rightarrow R$ is an a.u. continuous
martingale relative to the right continuous filtration $\widehat{\mathcal{L}}$.
This proves Assertion 4. 

6. The expectation $E|u(B_{\tau(r)})|=E|Y_{r}|$ is nondecreasing
because $|Y_{r}|$ is a wide sense submartingale. Since $B_{\tau(r)}$
induces the uniform distribution on $\partial D_{0,r},$we have 
\[
E|u(B_{\tau(r)})|=\overline{\sigma}_{m,0,r}|u|=\overline{\sigma}_{m,0,1}|u(r\cdot)|.
\]
Hence $\overline{\sigma}_{m,0,1}|u(r\cdot)|$ is nondecreasing in
$r\in(0,1)$. Assertion 5 is proved.

7. Similarly, the expectation $E\exp(-|u(B_{\tau(r)})|)=E\exp(-|Y_{r}|)$
is nondecreasing because $|Y_{r}|$ is a wide sense submartingale
and because $\exp(-y)$ is a convex function of $y\in R$. Since $B_{\tau(r)}$
induces the uniform distribution on $\partial D_{0,r},$we have 
\[
E\exp(-|u(B_{\tau(r)})|)=\overline{\sigma}_{m,0,r}\exp(-|u|)=\overline{\sigma}_{m,0,1}\exp(-|u(r\cdot)|).
\]
Hence $\overline{\sigma}_{m,0,1}\exp(-|u(r\cdot)|)$ is nondecreasing
in $r\in(0,1)$. Assertion 6 and the lemma are proved.
\end{proof}

\subsection{Maximal inequality for a martingale}
\begin{defn}
\label{Def. Special convex function}\textbf{ The special symmetric
convex function. }Define the continuous function $\overline{\lambda}:R\rightarrow R$
by
\begin{equation}
\overline{\lambda}(v)\equiv e^{-|v|}-1+|v|=\frac{|v|^{2}}{2!}-\frac{|v|^{3}}{3!}\pm\cdots\label{eq:temp-524-1}
\end{equation}
for each $v\in R$. We will call $\overline{\lambda}$ the \index{special convex function}\emph{special
symmetric convex function}. Then $\overline{\lambda}$ is continuously
differentiable and strictly convex on $R$, with 
\begin{equation}
|\overline{\lambda}(v)|\leq|v|\label{eq:temp-549}
\end{equation}
for each $v\in R$. $\square$
\end{defn}

\begin{thm}
\label{Thm. Maximal inequality for a martingale}\textbf{\emph{ Maximal
inequality for a martingale.}} Let $Z:\{0,1,\cdots,n\}\times\Omega\rightarrow R$
be an arbitrary martingale. Let $\varepsilon>0$ be arbitrary. Suppose
\begin{equation}
E\overline{\lambda}(Z_{n})-E\overline{\lambda}(Z_{0})<\frac{1}{6}\varepsilon^{3}\exp(-\frac{3}{\varepsilon}(E|Z_{0)}|\vee E|Z_{n}|)).\label{eq:temp-523}
\end{equation}
Then 
\begin{equation}
P(\bigvee_{k=0}^{n}|Z_{k}-Z_{0}|>\varepsilon)<\varepsilon.\label{eq:temp-509}
\end{equation}
\end{thm}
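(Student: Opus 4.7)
The plan is to invoke Bishop's maximal inequality for martingales from \cite{Chan21}, since the introduction of the excerpt explicitly flags this as the route to be taken. My first step is to verify that the present hypothesis matches the form of Bishop's inequality in Chan21. The role of $\overline{\lambda}$ is as a smooth convex surrogate for $v^{2}$: one has $\overline{\lambda}(0)=\overline{\lambda}'(0)=0$ and $\overline{\lambda}''(v)=e^{-|v|}$, so by Jensen's inequality applied to the martingale $Z$ the quantity $E\overline{\lambda}(Z_{n})-E\overline{\lambda}(Z_{0})$ is nonnegative and plays the role of a quadratic variation or conditional variance.

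If one wished to prove the bound from scratch rather than cite it, the argument would rest on three ingredients. First, convexity of $\overline{\lambda}$ makes $\overline{\lambda}(Z)$ a submartingale, so $E\overline{\lambda}(Z_{k})$ is nondecreasing in $k$ and $E\overline{\lambda}(Z_{n})-E\overline{\lambda}(Z_{0})$ dominates $E\overline{\lambda}(Z_{\tau})-E\overline{\lambda}(Z_{0})$ for any bounded stopping rule $\tau$. Second, Taylor expansion yields $\overline{\lambda}(v)\ge \tfrac{1}{2}v^{2}e^{-M}$ on $|v|\le M$, which converts the smallness of the left-hand side of the hypothesis into smallness of a weighted sum of conditional squared increments $\sum_{k}E[(Z_{k}-Z_{k-1})^{2}\mathbf{1}_{|Z_{k-1}|\vee|Z_{k}|\le M}]$. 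Third, the choice $M:=3\varepsilon^{-1}(E|Z_{0}|\vee E|Z_{n}|)$ is engineered so that Markov's inequality applied to the submartingale $|Z|$ produces $P(\bigvee_{k}|Z_{k}|>M)\le \varepsilon/3$, while the exponential factor $\exp(-3M/\varepsilon)$ in the hypothesis absorbs the $e^{M}$ penalty coming from the Taylor bound. On the truncated event a Chebyshev-style argument applied to the convex submartingale $\overline{\lambda}(Z_{k}-Z_{0})$ then gives $P(\bigvee_{k}|Z_{k}-Z_{0}|>\varepsilon)\le (\text{weighted quad.\ var.})/\overline{\lambda}(\varepsilon)$, which with the explicit constants simplifies to the claimed bound $\varepsilon$.

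The main obstacle is the constructive execution of each of these classical steps. The first-passage time $\tau=\min\{k:|Z_{k}-Z_{0}|>\varepsilon\}$ is not a constructive function of $\omega$, since the test $|Z_{k}(\omega)-Z_{0}(\omega)|>\varepsilon$ need not be decidable, so Doob's optional sampling cannot be invoked as a black box; one has to work with approximate events indexed by regular thresholds and finite partitions in the style of \cite{BishopBridges85,Chan21}. Markov's inequality likewise has to be applied at a constructively chosen level near $3\varepsilon^{-1}(E|Z_{0}|\vee E|Z_{n}|)$, and the total error probability $\varepsilon$ must be budgeted carefully across the three contributions (truncation of $|Z|$, the quadratic-variation comparison, and the maximal inequality itself) so that the sum does not exceed $\varepsilon$. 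All of these constructive substitutions are carried out once and for all in \cite{Chan21}; the present theorem is obtained simply by matching the hypothesis to that inequality under the $(\overline{\lambda},\varepsilon)$-normalization used here.
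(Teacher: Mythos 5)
Your proposal is correct and matches the paper's proof in essence: the paper likewise disposes of the theorem by observing that $\overline{\lambda}$ satisfies the admissibility conditions for Bishop's maximal inequality for martingales (citing Chapter 8 of Bishop 1967 rather than the restatement in Chan 2021) and then invoking that inequality as a black box. Your additional sketch of how the inequality would be proved from scratch goes beyond what the paper records, but the operative step — matching the hypothesis to Bishop's inequality under the $(\overline{\lambda},\varepsilon)$-normalization — is exactly the paper's argument.
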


\begin{proof}
It can easily be verified that $\overline{\lambda}$ satisfies the
defining conditions for admissible function in the maximal inequality
of Chapter 8 in \cite{Bishop67}, of which the present theorem is
therefore a special case. 
\end{proof}

\section{Brownian Limit Theorem for Hardy space }
\begin{defn}
\label{Def. Hardy space} \textbf{Hardy space. }A harmonic function
$u$ on $D$ is said to be a member of the \emph{Hardy space }\index{Hardy space}\emph{
}$\mathbf{h}^{p}$ if there exists $b_{0}>0$ such that 
\begin{equation}
\int_{z\in\partial D}|u(rz)|^{p}\overline{\sigma}_{m,0,1}(dz)<b_{0}\label{eq:temp-443-1-2}
\end{equation}
for each $r\in(0,1)$. Recall here that $\overline{\sigma}_{m,0,r}$
is the uniform distribution on the $(m-1)$--sphere $\partial D_{0,r}$
for each $r>0$. Note that, by Lyapunov's inequality, we have $\mathbf{h}^{p}\subset\mathbf{h}\equiv\mathbf{h}^{1}$.
Without loss of generality, we will assume that $u(0)=0$.
\end{defn}

$\square$
\begin{lem}
\emph{\label{Lem. Monotonicity and boundedness of certain integrals for members of Hardy space}}\textbf{\emph{Monotonicity
and boundedness of certain integrals for each member of the Hardy
space $\mathbf{h}^{p}$. }}Let the dimension $m\geq2$ and the exponent
$p\geq1$ be arbitrary, but fixed. Let $u\in\mathbf{h}^{p}$ be arbitrary.
Then $u\in\mathbf{h}$ and the following conditions hold.

1. For each $r\in(0,1)$, the function $u$ is uniformly continuous
on $\overline{D}_{0,r}$, with some modulus of continuity $\delta_{u,r}$.

2. The integral 
\begin{equation}
I_{1,r}\equiv\overline{\sigma}_{m,0,1}|u(r\cdot)|\equiv\int_{z\in\partial D}|u(rz)|\overline{\sigma}_{m,0,1}(dz)\label{eq:temp-548-1-1-1}
\end{equation}
is a nondecreasing function of $r\in(0,1)$\emph{.} Moreover, there
exists $b_{0}>0$ such that $I_{1,r}\leq b_{0}$ for each $r\in(0,1)$. 

3. The integral 
\begin{equation}
I_{2,r}\equiv\overline{\sigma}_{m,0,1}(e^{-|u(r\cdot)|})\equiv\int_{z\in\partial D}e^{-|u(rz)|}\overline{\sigma}_{m,0,1}(dz)\label{eq:temp-548-1-1-2-1-1}
\end{equation}
 is a nondecreasing function\emph{ }of\emph{ $r\in(0,1)$. }Moreover,
$I_{2,r}\leq1$ for each $r\in(0,1)$. 
\end{lem}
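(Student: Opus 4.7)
The plan is to derive this lemma essentially as a corollary of Corollary~\ref{Cor. Observations of a harmonic func on D of BM at successive exit times}, combined with the explicit uniform $L^{p}$-bound supplied by membership in $\mathbf{h}^{p}$ and Lyapunov's inequality to pass between $\mathbf{h}^{p}$ and $\mathbf{h}^{1}$. I would first apply Lyapunov's inequality on the probability integration $\overline{\sigma}_{m,0,1}$ to obtain
\[
I_{1,r}\;\leq\;\left(\int_{z\in\partial D}|u(rz)|^{p}\,\overline{\sigma}_{m,0,1}(dz)\right)^{\!1/p}\;\leq\;b_{0}^{1/p},
\]
which simultaneously shows $u\in\mathbf{h}$ and supplies the uniform upper bound promised in Assertion 2.

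For Assertion 1, since $u$ is harmonic on $D\equiv D_{0,1}$, Proposition~\ref{Prop. Harmonic functions have mean value property} gives $u$ the mean value property, and then Assertion 3 of Lemma~\ref{Lem. C^k function smoothed by convolution with indicator of ball is C^(k+1)} shows that $u$ has continuous partial derivatives of every order on $D_{0,r'}$ for any $r'\in(r,1)$. Taking $r'\equiv(r+1)/2$, the first-order partials are therefore continuous and hence bounded on the compact set $\overline{D}_{0,r}$, which yields a Lipschitz, and hence uniform, modulus of continuity $\delta_{u,r}$. A fully constructive quantitative $\delta_{u,r}$ can be read off by representing $u$ on $\overline{D}_{0,r}$ through the Poisson integral on $\partial D_{0,r'}$ (Theorem~\ref{Thm.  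Poisson kernel and Poisson integration on the m-sphere}), differentiating under the integral sign, and combining the explicit bounds on $\mathbf{k}_{0,r'}(x,\cdot)$ and its $x$-gradient over $\overline{D}_{0,r}\times\partial D_{0,r'}$ with the $L^{1}$-size bound $I_{1,r'}\leq b_{0}^{1/p}$ already established above.

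For the remaining halves of Assertions 2 and 3, the monotonicity in $r\in(0,1)$ of $I_{1,r}$ and $I_{2,r}$ is the exact content of Assertions 5 and 6 of Corollary~\ref{Cor. Observations of a harmonic func on D of BM at successive exit times}, applied to our harmonic $u$ on each sub-ball $\overline{D}_{0,r'}\subset D$; the bound $I_{2,r}\leq 1$ is immediate from $e^{-|u(rz)|}\leq 1$ pointwise. The main obstacle, and the only genuinely delicate point, is the constructive production of $\delta_{u,r}$ in Assertion 1: one must exhibit it explicitly in terms of $r$, the dimension $m$, and the Hardy bound $b_{0}$ rather than invoking a non-constructive sup-norm of $u$, and the Poisson-integral route sketched above provides exactly such an explicit quantitative dependence.
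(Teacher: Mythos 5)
Your proposal is correct and follows essentially the same route as the paper: monotonicity of $I_{1,r}$ and $I_{2,r}$ via Assertions 5 and 6 of Corollary \ref{Cor. Observations of a harmonic func on D of BM at successive exit times}, boundedness via Lyapunov's inequality (which is exactly how the paper's Definition \ref{Def. Hardy space} justifies $\mathbf{h}^{p}\subset\mathbf{h}$), and the trivial pointwise bound $e^{-|u|}\leq1$ for Assertion 3. The only divergence is Assertion 1, where the paper treats uniform continuity on $\overline{D}_{0,r}$ as immediate from the definitions (Definition \ref{Def. Harmoncic function} already requires a harmonic function to have a second derivative uniformly continuous on compact subsets), so your detour through the mean value property, the smoothing lemma, and the quantitative Poisson-kernel estimate is harmless but unnecessary.
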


\begin{proof}
1. Assertions 1 and the boundedness of $I_{1,r}$ immediately follows
from Definition \ref{Def. Hardy space} for the Hardy space.

2. According to assertion 5 of Corollary \ref{Cor. Observations of a harmonic func on D of BM at successive exit times},
the integral $I_{1,r}\equiv\overline{\sigma}_{m,0,1}|u(r\cdot)|$
is nondecreasing in $r\in(0,1)$. Since $I_{1,r}$ has just been proved
to be bounded, Assertion 2 of the present lemma follows.

3. According to assertion 6 of Corollary \ref{Cor. Observations of a harmonic func on D of BM at successive exit times},
the integral $I_{2,r}\equiv\overline{\sigma}_{m,0,1}e^{-|u(r\cdot)|}$
is nondecreasing. Since $e^{-|u(r\cdot)|}\leq1$, we see that $I_{2,r}\leq1$.
Assertion 3 of the present lemma is proved.
\end{proof}
In the following, for abbreviation, we will write $\tau_{r}\equiv\tau_{0,r}$
for each $r>0$. If emphasis is needed for the underlying Brownian
motion $B$, then we write $\tau_{r;B}\equiv\tau_{0,r;B}$ for each
$r>0$. Next is the main theorem of the present paper
\begin{thm}
\emph{\label{Thm Brownian limit theorem for Hardy space}}\textbf{\emph{
Brownian limit theorem for the Hardy space $\mathbf{h}$.}} Let the
dimension $m\geq2$ and the exponent $p\geq1$ be arbitrary, but fixed.
Let $u$ be an arbitrary member of the Hardy space $\mathbf{h}^{p}$.
Suppose the following conditions\emph{ (i)} and \emph{(ii)} hold.

\emph{(i) There exists} $b_{1}\geq0$ such that $I_{1,r}\equiv\overline{\sigma}_{m,0,1}|u(r\cdot)|\rightarrow b_{1}$
as $r\rightarrow1$ with $r\in(0,1)$. More precisely, suppose that,
for each $\varepsilon>0$ there exists $\delta_{1}(\varepsilon)\in(0,1)$
so small that 
\begin{equation}
0\leq b_{1}-I_{1,r}<\varepsilon\label{eq:temp-31-2-1}
\end{equation}
for each $r\in(1-\delta_{1}(\varepsilon),1)$. The number $b_{1}$
and the operation $\delta_{1}$ are, respectively, the limit and the
\emph{rate of convergence}, of the integral $I_{1,r}$ as $r\rightarrow1$
with $r\in(0,1)$. By replacing $\delta_{1}$ if necessary, we may,
without loss of generality, assume that $\delta_{1}(\varepsilon)\downarrow0$
as $\varepsilon\downarrow0$. 

\emph{(ii) There exists} $b_{2}\geq0$ such that $I_{2,r}\equiv\overline{\sigma}_{m,0,1}(e^{-|u(r\cdot)|})\rightarrow b_{2}$
as $r\rightarrow1$ with $r\in(0,1)$. More precisely, suppose that,
for each $\varepsilon>0$, there exists $\delta_{2}(\varepsilon)\in(0,1)$
so small that 
\begin{equation}
0\leq b_{2}-I_{2,r}<\varepsilon\label{eq:temp-31-2}
\end{equation}
for each $r\in(1-\delta_{2}(\varepsilon),1)$. The number $b_{2}$
and the operation $\delta_{2}$ are, respectively, the limit and the
\emph{rate of convergence}, of the integral $I_{2,r}$ as $r\rightarrow1$
with $r\in(0,1)$. By replacing $\delta_{2}$ if necessary, we may,
without loss of generality, assume that $\delta_{2}(\varepsilon)\downarrow0$
as $\varepsilon\downarrow0$.

Then there exist an increasing sequence $(r_{q})_{q=1,2,\cdots}$
in $(0,1)$ with $r_{q}\uparrow1$ and a r.r.v. $V$ such that, for
each $q\geq1$, there exists a measurable set $\overline{A}_{q}$
with $P(\overline{A}_{q})<2^{-q+4}$ and
\begin{equation}
\overline{A}_{q}^{c}\subset(\bigvee_{s\in[\tau(r(q)),\tau(1))}|V-u(B_{s})|\leq2^{-q+3}).\label{eq:temp-34-1-1-2-1}
\end{equation}
In short, as $s\uparrow1$, we have $u(B_{s})\rightarrow V$ a.u.
\end{thm}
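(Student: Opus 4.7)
The plan is to apply the martingale maximal inequality (Theorem~\ref{Thm. Maximal inequality for a martingale}) to the a.u.\ continuous $\widehat{\mathcal{L}}$-martingale $r \mapsto Y_r \equiv u(B_{\tau_r})$ supplied by Corollary~\ref{Cor. Observations of a harmonic func on D of BM at successive exit times}, and then lift the resulting Cauchy property at first-exit times to continuous-time control along the Brownian path using the bounded a.u.\ continuous time-indexed martingale from Dynkin's Theorem~\ref{Thm. Dynkin's theorem}.

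The first step is to rewrite the two hypotheses as quantitative control on the moment $E\overline{\lambda}(Y_r)$. Since $B_{\tau_r}$ induces the uniform distribution on $\partial D_{0,r}$ (Theorem~\ref{Thm. Exit distribution for the (m-1)-sphere}),
\[
E\overline{\lambda}(Y_r) = E\bigl[e^{-|u(B_{\tau_r})|} - 1 + |u(B_{\tau_r})|\bigr] = I_{2,r} + I_{1,r} - 1,
\]
and by hypotheses (i)--(ii) the right-hand side is Cauchy as $r \uparrow 1$ with rate readable off $\delta_1, \delta_2$; also $E|Y_r| = I_{1,r} \leq b_1$ uniformly. For each $q \geq 1$ I would then choose $r_q \uparrow 1$ such that, writing $\varepsilon_q \equiv 2^{-q}$,
\[
E\overline{\lambda}(Y_{r_{q+1}}) - E\overline{\lambda}(Y_{r_q}) < \tfrac{1}{6}\varepsilon_q^{3}\exp(-3 b_1 / \varepsilon_q),
\]
which is feasible constructively from $\delta_1, \delta_2$ and primes the maximal inequality with $\varepsilon = \varepsilon_q$.

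For any finite refinement $r_q = s_0 < \cdots < s_N = r_{q+1}$ the discrete martingale $Z_k \equiv Y_{s_k}$ satisfies the hypothesis of Theorem~\ref{Thm. Maximal inequality for a martingale}, yielding $P\bigl(\bigvee_{k \leq N} |Y_{s_k} - Y_{r_q}| > \varepsilon_q\bigr) < \varepsilon_q$. To promote this sampling-at-exit-radii bound to a sup over continuous time $s \in [\tau_{r_q}, \tau_{r_{q+1}}]$, I would switch to the bounded a.u.\ continuous $\mathcal{L}$-martingale $M_t \equiv u(B_{t \wedge \tau_{r_{q+1}}})$ of Dynkin's theorem. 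Using the strong Markov property (Theorem~\ref{Thm: Strong Markov property of Brownian motion}) at the stopping time $\tau_{r_q}$, the shift $t \mapsto M_{\tau_{r_q} + t}$ is a martingale with initial value $Y_{r_q}$ and terminal value $Y_{r_{q+1}}$, hence the same $E\overline{\lambda}$-increment as above. Sampling $M$ at a fine deterministic time grid on $[0, T]$ with $T$ chosen large enough that $P(\tau_{r_{q+1}} > T)$ is bounded using the tightness modulus $N_{\widetilde{r}, k}$ of Assertion~4 of Theorem~\ref{Thm.  Existence and continuity of first exit time}, and exploiting the a.u.\ continuity of $t \mapsto M_t$ to approximate the continuous sup by the discrete one, I arrive at
\[
P\Bigl(\bigvee_{s \in [\tau_{r_q}, \tau_{r_{q+1}}]} |u(B_s) - u(B_{\tau_{r_q}})| > \varepsilon_q\Bigr) < 2\varepsilon_q.
\]

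Summing geometrically over $k \geq q$ produces a measurable exceptional set $\overline{A}_q$ of probability less than $2^{-q+4}$ on whose complement the $u(B_{\tau_{r_k}})$ form a telescoping Cauchy sequence in $k$ with a.u.\ limit $V$, while the continuous oscillation bounds patch together to yield $|V - u(B_s)| \leq 2^{-q+3}$ uniformly for $s \in [\tau_{r_q}, \tau_1)$, which is exactly the stated conclusion. The main obstacle is this last passage from discrete to continuous sup: the Brownian path on $[\tau_{r_q}, \tau_{r_{q+1}}]$ can revisit every radius in $[0, r_{q+1}]$, so the sampling bound on $Y$ at the exit radii $s_k$ does not on its own control $u(B_s)$ at intermediate times. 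The resolution is to replace the radius-indexed martingale $Y$ by the time-indexed martingale $M$, exploiting the a.u.\ continuity of $M$ in $t$ to bridge discrete and continuous suprema, and this is where the constructive bookkeeping of the rates $\delta_1, \delta_2$, the tightness modulus $N_{\widetilde{r}, k}$, and the a.u.\ continuity modulus of $B$ must be combined with some care.
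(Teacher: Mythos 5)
Your proposal is correct and follows essentially the same route as the paper: rewrite hypotheses (i)--(ii) as quantitative control on the increments of $E\overline{\lambda}(Y_r)$, choose $r_q$ so that Theorem \ref{Thm. Maximal inequality for a martingale} applies with $\varepsilon=2^{-q}$, and pass from the discrete to the continuous supremum by optional sampling of the Dynkin martingale along a fine deterministic time grid clipped between $\tau_{r_q}$ and $\tau_{r_{q+1}}$, using the tightness of the exit time and the a.u.\ continuity of $B$. In particular, the obstacle you flag (radius-sampling of $Y$ does not control $u(B_s)$ at intermediate times) and your resolution via the time-indexed martingale are exactly the paper's Steps 9--13, where the sampled martingale is $Z_k\equiv u(B_{\tau(r_q)\vee v_k\wedge\tau(r_{q+1})})$.
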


\begin{proof}
1. We will verified that the integral 
\begin{equation}
I_{3,r}\equiv\overline{\sigma}_{m,0,1}\overline{\lambda}(u(r\cdot))\label{eq:temp-548-1}
\end{equation}
is a nondecreasing function of\emph{ $r\in(0,1)$. }To that end, we
will first show that
\begin{equation}
I_{3,r}\uparrow\gamma\equiv b_{2}-1+b_{1}\label{eq:temp-37}
\end{equation}
as $r\rightarrow1$ with $r\in(0,1)$. More precisely, we will prove
that for each $r\in(0,1)$ and $\varepsilon>0$, we have 
\begin{equation}
0\leq\gamma-I_{3,r}<\varepsilon\label{eq:temp-48}
\end{equation}
provided that
\[
1-r<\delta_{3}(\varepsilon)\equiv\delta_{1}(2^{-1}\varepsilon)\wedge\delta_{2}(2^{-1}\varepsilon).
\]
Thus $\gamma$ and the function $\delta_{3}$ are, respectively, the
limit and a rate of convergence of the integral $I_{3,r}$ as $r\rightarrow1.$
Note that both $\gamma$ and $\delta_{3}$ depend only on the given
items $b_{1},b_{2},\delta_{1},\delta_{2}$.

2. Note that 
\[
I_{3,r}\equiv\overline{\sigma}_{m,0,1}\overline{\lambda}(u(r\cdot))\equiv\int_{z\in\partial D}\overline{\lambda}(u(rz))\overline{\sigma}_{m,0,1}(dz)
\]
\[
=\int_{x\in\partial D(0,r)}\overline{\sigma}_{m,0,r}(dx)\overline{\lambda}(u(x))
\]
\begin{equation}
=E\overline{\lambda}(u(B_{\tau(r)}))\equiv E\overline{\lambda}(Y_{r}),\label{eq:temp-32}
\end{equation}
where the third equality is by an application of Corollary \ref{Cor. distribution on D(y,r) invariant relative to rotation is equal to uniform distribution-1},
where fourth equality is because the r.v. $B_{\tau(r)}$ induces the
uniform distribution $\overline{\sigma}_{m,0,r}$ on $\partial D_{0,r}$
according to Assertion 3 of Theorem \ref{Thm. Exit distribution for the (m-1)-sphere},
and where the last equality is by Assertion 2 of Corollary \ref{Cor. Observations of a harmonic func on D of BM at successive exit times}
regarding the martingale $Y$. Since $\overline{\lambda}$ is a convex
function, $E\overline{\lambda}(Y_{r})$ is nondecreasing in $r$.
Therefore $I_{3,r}$ on the left-hand side of equality \ref{eq:temp-32}
is nondecreasing in $r$. 

3. By Definition \ref{Def. Harmoncic function}, the harmonic function
$u$ on $D$ is uniformly continuous on compact subsets of $D$. In
particular, $u$ is uniformly continuous on $\overline{D}_{0,r}$
with some modulus of continuity $\delta_{u,r}$ for each $r\in(0,1)$.
Consequently, the function $u(r\cdot)$ is uniformly continuous on
$\overline{D}_{0,1}$ with modulus of continuity $\delta_{u,r}$,
for each $r\in(0,1)$.

4. By Definition \ref{Def. Special convex function}, we have
\begin{eqnarray}
\overline{\lambda}(v) & \equiv & (e^{-|v|}-1+|v|)\label{eq:temp-524-1-1}
\end{eqnarray}
for each $v\in R$. Hence
\[
I_{3,r}\equiv\overline{\sigma}_{m,0,1}\overline{\lambda}(u(r\cdot))=\overline{\sigma}_{m,0,1}(e^{-|u(r\cdot)|}-1+|u(r\cdot)|)
\]
\[
=I_{2,r}-1+I_{1,r}\rightarrow\gamma\equiv b_{2}-1+b_{1}.
\]
as $r\uparrow1$. More precisely, for each $r\in(0,1)$ and $\varepsilon>0$,
we have 
\begin{equation}
0\leq\gamma-I_{3,r}=(b_{2}-I_{2,r})+(b_{1}-I_{1,r})<2^{-1}\varepsilon+2^{-1}\varepsilon=\varepsilon,\label{eq:temp-31}
\end{equation}
provided that 
\[
1-r<\delta_{3}(\varepsilon)\equiv\delta_{1}(2^{-1}\varepsilon)\wedge\delta_{2}(2^{-1}\varepsilon).
\]
Convergence relation \ref{eq:temp-37} and inequality \ref{eq:temp-48}
are proved. 

5. Define an increasing sequence $(r_{q})_{q=1,2,\cdots}$ in $(0,1)$
by
\begin{equation}
r_{q}\equiv1-\delta_{3}(\frac{1}{12}2^{-q}\exp(-3\cdot2^{q}b_{1}))\label{eq:temp-33}
\end{equation}
 for each\emph{ $q\geq1$}. Then $r_{q}\uparrow1$ as $q\rightarrow\infty$. 

6. Consider each $s,r\in(0,1)$ with $s\leq r$. Since the pair 
\[
(u(B_{\tau(s)}),u(B_{\tau(r)}))\equiv(u(B_{\tau(s)}),u(B_{\tau(r)}))
\]
of r.r.v.'s is a martingale according to Corollary \ref{Cor. Observations of a harmonic func on D of BM at successive exit times},
theorem 8.3.2 of \cite{Chan21} (the Bishop-Jensen inequality) is
applicable, to yield
\begin{equation}
0\leq E\overline{\lambda}(u(B_{\tau(r)}))-E\overline{\lambda}(u(B_{\tau(s)})).\label{eq:temp-124-1-2-1-1}
\end{equation}

7. Let $\varepsilon>0$ be arbitrary. Then, in view of equality \ref{eq:temp-32},
we can restate equality \ref{eq:temp-31} as
\begin{equation}
0\leq\gamma-E\overline{\lambda}(Y_{r}))<\varepsilon.\label{eq:temp-31-1-1-1}
\end{equation}
for each $r\in(1-\delta_{3}(\varepsilon),1)$. 

8. Now let $q\geq1$ be arbitrary. Write $\varepsilon_{q}\equiv2^{-q}$
for abbreviation. Then inequality \ref{eq:temp-33} implies
\begin{equation}
1-r_{q}<\delta_{3}(\frac{1}{6}\varepsilon_{q}\exp(-\frac{3}{\varepsilon_{q}}b_{1})),\label{eq:temp-33-1-1}
\end{equation}
or, equivalently,
\[
1-r_{q}<\widetilde{\delta}_{q}\equiv\delta_{3}(\widetilde{\varepsilon}_{q})
\]
where
\[
\widetilde{\varepsilon}_{q}\equiv\frac{1}{6}\varepsilon_{q}\exp(-\frac{3}{\varepsilon_{q}}b_{1}).
\]

9. Separately, recall from Definition \ref{Def. Modulus of continuity at 0 of standard normal CDF}
the integer $N_{2,q}$, with
\begin{equation}
2\Phi_{0,1}(\frac{2}{\sqrt{N(2,q)}})-1<2^{-q},\label{eq:temp-14-1-1}
\end{equation}
where $\Phi_{0,1}$ is the standard normal C.D.F.  Consider the first
exit time $\tau_{1}$. Fix an arbitrary $t_{q}>N_{2,q}.$ Then, applying
Assertion 4 of Theorem \ref{Thm.  Existence and continuity of first exit time},
where $x,\overline{r},r,\widetilde{r},k,t$ are replaced with $0,2^{-1},1,2,q,t_{q}$
respectively, we obtain 
\begin{equation}
P(\tau_{1}>t_{q})\leq2^{-q+1}.\label{eq:temp-43-1}
\end{equation}

10. Next, note that
\[
E|Y_{r(q)}|=E|u(B_{\tau(r(q))})|=\overline{\sigma}_{m,0,r(q)}|u|
\]
\begin{equation}
=\overline{\sigma}_{m,0,1}|u(r_{q}\cdot)|\equiv I_{1,r(q)}\leq b_{1},\label{eq:temp-45-1}
\end{equation}
where the inequality is by the first half of inequality \ref{eq:temp-31-2-1}.
At the same time, since
\[
1-r_{q}<\delta_{3}(\widetilde{\varepsilon}_{q})\leq\delta_{1}(2^{-1}\widetilde{\varepsilon}_{q})\leq\delta_{1}(\widetilde{\varepsilon}_{q}),
\]
we have, by inequality \ref{eq:temp-31-2-1}, 
\[
0\leq\gamma-E\overline{\lambda}(Y_{r(q)}))<\widetilde{\varepsilon}_{q},
\]
In view of the monotonicity of the sequence $(E\overline{\lambda}(Y_{r(q)}))_{q=1,2,\cdots}$,
this implies that
\[
0\leq E\overline{\lambda}(Y_{r(q+1)})-E\overline{\lambda}(Y_{r(q)})\leq\gamma-E\overline{\lambda}(Y_{r(q)})
\]
\[
<\widetilde{\varepsilon}_{q}\equiv\frac{1}{6}\varepsilon_{q}^{3}\exp(-\frac{3}{\varepsilon_{q}}b_{1})
\]
\begin{equation}
\leq\frac{1}{6}\varepsilon_{q}^{3}\exp(-\frac{3}{\varepsilon_{q}}(E|Y_{r(q)}|\vee E|Y_{r(q+1)}|)),\label{eq:temp-35-1}
\end{equation}
where the last inequality is thanks to inequality \ref{eq:temp-45-1}.

11. Continuing, write 
\[
\overline{\varepsilon}_{q}\equiv\delta_{u,r(q+1)}(\varepsilon_{q})>0.
\]
where we recall that $\delta_{u,r(q+1)}$ is a modulus of continuity
of the harmonic function $u$ on the compact subset $\overline{D}_{0,r(q+1)}$
of $D_{0,1}.$ Since the Brownian motion $B:[0,t_{q}]\times\Omega\rightarrow R^{m}$
is a.u. continuous on $[0,t_{q}]$, there exists a measurable set
$H_{q}\subset\Omega$ with 
\begin{equation}
P(H_{q})<\varepsilon_{q}\label{eq:temp-72}
\end{equation}
and some $\delta_{B,t(q)}(\overline{\varepsilon}_{q})>0$ such that
for each $\omega\in H_{q}^{c}$ and $v,s\in[0,t_{q}]$ with $|v-s|\leq\delta_{B,t(q)}(\overline{\varepsilon}_{q})$,
we have $\left\Vert B_{v}(\omega)-B_{s}(\omega)\right\Vert <\overline{\varepsilon}_{q}$. 

12. Now consider an arbitrary $n\geq1$ and arbitrary sequence $0\equiv v_{0}<v_{1}<\cdots<v_{n}\equiv t_{q}$
such that 
\begin{equation}
\bigvee_{k=1}^{n}|v_{k}-v_{k-1}|<\delta_{B,t(q)}(\overline{\varepsilon}_{q}).\label{eq:temp-36-1}
\end{equation}
Consider the martingale $Z_{0},\cdots,Z_{n},Z_{n+1}$ where 
\[
Z_{k}\equiv u(B_{\tau(r(q))\vee v(k)\wedge\tau(r(q+1))})
\]
for each $k=0,\cdots,n$, and where
\[
Z_{n+1}\equiv Y_{r(q+1)}\equiv u(B_{\tau(r(q+1))}).
\]
Take an arbitrary $\widehat{\varepsilon}_{q}\in(\varepsilon_{q},\varepsilon_{q-1})\equiv(2^{-q},2^{-q+1})$.
Then
\[
0\leq E\overline{\lambda}(Z_{n+1})-E\overline{\lambda}(Z_{0})
\]
\[
=E\overline{\lambda}(Y_{r(q+1)})-E\overline{\lambda}(Y_{r(q)})
\]
\[
\leq\frac{1}{6}\varepsilon_{q}^{3}\exp(-\frac{3}{\varepsilon_{q}}(E|Y_{r(q)}|\vee E|Y_{r(q+1)}|)
\]
\[
\leq\frac{1}{6}\widehat{\varepsilon}_{q}^{3}\exp(-\frac{3}{\widehat{\varepsilon}_{q}}(E|Y_{r(q)}|\vee E|Y_{r(q+1)}|)
\]
\[
=\frac{1}{6}\widehat{\varepsilon}_{q}^{3}\exp(-\frac{3}{\widehat{\varepsilon}_{q}}(E|Z_{0}|\vee E|Z_{n+1}|))),
\]
where the second inequality is from inequality \ref{eq:temp-35-1}.
Thus the conditions in Theorem \ref{Thm. Maximal inequality for a martingale}
are satisfied by the martingale $Z_{0},\cdots,Z_{n+1}$ and the constant
$\widehat{\varepsilon}_{q}$. Accordingly, inequality \ref{eq:temp-509}
of Theorem \ref{Thm. Maximal inequality for a martingale} is applicable
and yields
\begin{equation}
P(\bigvee_{k=0}^{n+1}|Z_{k}-Z_{0}|>\widehat{\varepsilon}_{q})<\widehat{\varepsilon}_{q}.\label{eq:temp-509-2-1-1}
\end{equation}
In other words,
\[
P(\bigvee_{k=0}^{n+1}|u(B_{\tau(r(q))\vee v(k)\wedge\tau(r(q+1))})-u(B_{\tau(r(q))})|>\widehat{\varepsilon}_{q})<\widehat{\varepsilon}_{q}.
\]
Hence
\begin{equation}
P(K_{q})<\widehat{\varepsilon}_{q}<2^{-q+1},\label{eq:temp-509-1-1-1-1}
\end{equation}
where
\begin{equation}
K_{q}^{c}\equiv(\bigvee_{k=0}^{n}|u(B_{\tau(r(q))\vee v(k)\wedge\tau(r(q+1))})-u(B_{\tau(r(q))})|\leq\widehat{\varepsilon}_{q}).\label{eq:temp-37-1}
\end{equation}

13. Now consider the measurable set 
\[
A_{q}\equiv G_{q}\cup H_{q}\cup K_{q}.
\]
Consider each $\omega\in A_{q}^{c}$. Then $\omega\in G_{q}^{c}\equiv(\tau_{1}\leq t_{q})\subset(\tau_{r(q+1)}\leq t_{q})$.
Let 
\[
s\in[\tau_{r(q)}(\omega),\tau_{r(q+1)}(\omega)]\subset[0,t_{q}]
\]
be arbitrary. Then there exists $k=0,\cdots,n$ such that 
\[
|\tau_{r(q)}\vee v_{k}\wedge\tau_{r(q+1)}(\omega)-s|\leq|v_{k}-s|\leq\bigvee_{i=1}^{n}|v_{i}-v_{i-1}|<\delta_{B,t(q)}(\overline{\varepsilon}_{q}).
\]
Hence, because $\omega\in H_{q}^{c}$, we have
\[
\left\Vert B_{\tau(r(q))\vee v(k)\wedge\tau(r(q+1))}(\omega)-B_{s}(\omega)\right\Vert <\overline{\varepsilon}_{q}\equiv\delta_{u,r(q+1)}(\varepsilon_{q}).
\]
Therefore, since $B_{\tau(r(q))\vee v(k)\wedge\tau(r(q+1))}(\omega),B_{s}(\omega)\in D_{0,r(q+1)},$
and since $\delta_{u,r(q+1)}$ is a modulus of continuity of the function
$u$ on $\overline{D}_{0,r(q+1)},$ we infer that
\begin{equation}
|u(B_{\tau(r(q))\vee v(k)\wedge\tau(r(q+1))}(\omega))-u(B_{s}(\omega))|<\varepsilon_{q}.\label{eq:temp-50-1}
\end{equation}
Moreover, since $\omega\in K_{q}^{c}$, it follows from equality \ref{eq:temp-37-1}
that 
\begin{equation}
|u(B_{\tau(r(q))\vee v(k)\wedge\tau(r(q+1))})-u(B_{\tau(r(q))}(\omega))|\leq\widehat{\varepsilon}_{q}<2^{-q+1}.\label{eq:temp-49-1}
\end{equation}
Combining inequalities \ref{eq:temp-50-1} and \ref{eq:temp-49-1},
we obtain
\begin{equation}
|u(B_{\tau(r(q))})-u(B_{s}(\omega))|<\varepsilon_{q}+\widehat{\varepsilon}_{q}<2^{-q}+2^{-q+1}<2^{-q+2},\label{eq:temp-76}
\end{equation}
where $s\in[\tau_{r(q)}(\omega),\tau_{r(q+1)}(\omega)]$ and $\omega\in A_{q}^{c}$
are arbitrary. Summing up,
\[
A_{q}^{c}\subset(\bigvee_{s\in[\tau(r(q)),\tau(r(q+1)]}|u(B_{\tau(r(q))})-u(B_{s})|\leq2^{-q+2}).
\]
and 
\[
P(A_{q})=P(G_{q})+P(H_{q})+P(K_{q})<2^{-q+1}+2^{-q}+2^{-q+1}<2^{-q+3},
\]
thanks to inequalities \ref{eq:temp-43-1}, \ref{eq:temp-72}, and
\ref{eq:temp-509-1-1-1-1}.

14. Now let $\overline{A}_{q}\equiv\bigcup_{k=q}^{\infty}A_{k}$.
Then $P(\overline{A}_{q})<2^{-q+4}$. Moreover 
\begin{equation}
\overline{A}_{q}^{c}\subset\bigcap_{k=q}^{\infty}(\bigvee_{s\in[\tau(r(k)),\tau(r(k+1)]}|u(B_{\tau(r(k))})-u(B_{s})|\leq2^{-k+1})\label{eq:temp-34-1-1}
\end{equation}
Hence
\begin{equation}
\overline{A}_{q}^{c}\subset\bigcap_{k=q}^{\infty}(|u(B_{\tau(r(k))})-u(B_{\tau(r(k+1))})|\leq2^{-k+1}).\label{eq:temp-35}
\end{equation}
Thus $u(B_{\tau(r(k))})\rightarrow V$ a.u. for some r.r.v. $V$.
Combining with \ref{eq:temp-34-1-1}, we obtain
\[
\overline{A}_{q}^{c}\subset\bigcap_{k=q}^{\infty}\{(|u(B_{\tau(r(k))})-V)|\leq2^{-k+2})\cap(\bigvee_{s\in[\tau(r(k)),\tau(r(k+1)]}|u(B_{\tau(r(k))})-u(B_{s})|\leq2^{-k+1})\}
\]
\[
\subset\bigcap_{k=q}^{\infty}(\bigvee_{s\in[\tau(r(k)),\tau(r(k+1)]}|V-u(B_{s})|\leq2^{-k+2}+2^{-k+1})
\]
\[
\subset\bigcap_{k=q}^{\infty}(\bigvee_{s\in[\tau(r(k)),\tau(r(k+1)]}|V-u(B_{s})|\leq2^{-q+3})
\]
\[
\subset(\bigvee_{s\in[\tau(r(q)),\tau(1))}|V-u(B_{s})|\leq2^{-q+3}),
\]
where $P(\overline{A}_{q})<2^{-q+4}$, where $q\geq1$ is arbitrary.
The theorem is proved.
\end{proof}
We conclude this article with a conjecture regarding the nontangential
limit of harmonic functions in the unit $m$-sphere.
\begin{defn}
\textbf{\label{Def. Nontangential convergence} (Convex hull, Stolz
domain, and Nontangential limit). }Let $A$ be an arbitrary subset
of $R^{m}$. The \emph{convex hull}\index{convex hull} of the set
$A$ is then defined as the set 
\[
\widehat{A}\equiv A^{\wedge}\equiv\{\alpha_{1}v_{1}+\cdots+\alpha_{k}v_{k}:k\geq1;v_{1}\cdots v_{k}\in A;\alpha_{1},\cdots,\alpha_{k}\geq0;\alpha_{1}+\cdots+\alpha_{k}=1\}
\]
of all convex combinations of points in $A$. Let $z\in\partial D_{0,1}$
and $a\in[0,1)$ be arbitrary. Then the subset 
\[
S_{z,a}\equiv(\{z\}\cup D_{0,a})^{\wedge}
\]
of $D_{0,1}$ is called a \index{Stolz domain.}\emph{Stolz domain.}

Let $g:D_{0,1}\rightarrow R$ be an arbitrary continuous function.
Then we say that the function $g$ has a \index{nontangential limit}\emph{nontangential
limit} $c$ at the point $z\in\partial D_{0,1}$ if, for each $a\in[0,1)$,
we have 
\[
\lim_{x\rightarrow z;x\in S(z,a)}g(x)=c.
\]
\end{defn}

$\square$

The classical nontangential-limit theorem can be reformulated as follows.
\begin{thm}
\textbf{\emph{\label{Thm. Classical-nontangential-limit theorem for Hardy spaces}
(Classical nontangential limit theorem for Hardy spaces)}}\textbf{.}
Let $u$ be an arbitrary member of the Hardy space $\mathbf{h}^{p}$.
Suppose condition\emph{ (i)} in Theorem \ref{Thm Brownian limit theorem for Hardy space}holds.
Then $u$ has a nontangential limit at each point $z$ in some measurable
subset with probability 1 in $\partial D_{0,1}$ relative to the uniform
distribution $\overline{\sigma}_{m,0,1}$.
\end{thm}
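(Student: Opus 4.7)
The plan is to combine the main Brownian limit theorem above with a transfer argument from Brownian-path convergence to nontangential convergence, after a preliminary reduction establishing the full hypotheses of Theorem~\ref{Thm Brownian limit theorem for Hardy space}. The preliminary step is to derive condition (ii) from condition (i). Since $0\leq e^{-|u(r\cdot)|}\leq 1$ and $I_{2,r}$ is nondecreasing in $r$ by Assertion~3 of Lemma~\ref{Lem. Monotonicity and boundedness of certain integrals for members of Hardy space}, the sequence $I_{2,r}$ has a limit $b_{2}\leq 1$; the work is in extracting a rate. Using the identity $I_{3,r}=I_{1,r}+I_{2,r}-1$ exhibited in step 4 of the proof of Theorem~\ref{Thm Brownian limit theorem for Hardy space}, together with the monotonicity and the uniform upper bound $I_{3,r}\leq I_{1,r}\leq b_1$ obtained from the Bishop-Jensen inequality applied to the martingale $Y_{r}\equiv u(B_{\tau(r)})$, the assumed rate $\delta_{1}$ for $I_{1,r}$ transfers to a rate $\delta_{2}$ for $I_{2,r}$, with $b_{2}=1+\gamma-b_{1}$ for $\gamma\equiv\lim I_{3,r}$.

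With (ii) in hand, Theorem~\ref{Thm Brownian limit theorem for Hardy space} yields an increasing sequence $r_{q}\uparrow 1$ and a r.r.v.\ $V$ with $u(B_{s})\to V$ a.u.\ as $s\uparrow\tau_{1}$. The next step is to produce a boundary function $v:\partial D\to R$, integrable relative to $\overline{\sigma}_{m,0,1}$, such that $V=v(B_{\tau(1)})$ a.s. Here I would apply the Strong Markov property (Theorem~\ref{Thm: Strong Markov property of Brownian motion}) at each stopping time $\tau_{r(q)}$: conditional on $L^{(\tau(r(q)))}$, the future $(B_{\tau(r(q))+t})_{t\geq 0}$ is an independent Brownian motion issuing from $B_{\tau(r(q))}$, so the conditional law of $V$ given the past depends only on $B_{\tau(r(q))}$. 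Passing $q\to\infty$ and invoking the uniform exit distribution of $B_{\tau(1)}$ on $\partial D$ from Assertion~3 of Theorem~\ref{Thm. Exit distribution for the (m-1)-sphere}, one obtains $V=v(B_{\tau(1)})$ a.s., with $v$ inheriting the Poisson representation
\[
u(x)=\int_{z\in\partial D}\mathbf{k}_{0,1}(x,z)v(z)\overline{\sigma}_{m,0,1}(dz)
\]
for each $x\in D_{0,1}$.

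For the final step, fix a Stolz parameter $a\in[0,1)$, a boundary point $z\in\partial D$, and consider $x\in S_{z,a}$ approaching $z$. Using the Poisson representation and equality \ref{eq:temp} of Theorem~\ref{Thm.  Poisson kernel and Poisson integration on the m-sphere}, write
\[
u(x)-v(z)=\int_{w\in\partial D}\mathbf{k}_{0,1}(x,w)(v(w)-v(z))\overline{\sigma}_{m,0,1}(dw),
\]
and split the integral into a small spherical cap $C_{\eta}\equiv\{w\in\partial D:\left\Vert w-z\right\Vert <\eta\}$ and its complement. On $C_{\eta}$ the bound is controlled by the oscillation $\sup_{w\in C_{\eta}}|v(w)-v(z)|$, which is small at Lebesgue points of $v$. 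On the complement, an elementary geometric estimate shows that when $x\in S_{z,a}$ and $\left\Vert w-z\right\Vert \geq\eta$, the distance $\left\Vert x-w\right\Vert $ is bounded below by a constant depending only on $a$ and $\eta$, so the Poisson kernel $\mathbf{k}_{0,1}(x,\cdot)$ is uniformly small there as $x\to z$ (its $L_{1}$-mass against $\overline{\sigma}_{m,0,1}$ on that set tends to $0$). Combining the two parts, $|u(x)-v(z)|\to 0$ as $x\to z$ inside $S_{z,a}$, at every Lebesgue point $z$ of $v$, which gives a measurable subset of full $\overline{\sigma}_{m,0,1}$-measure on which $u$ has nontangential limit $v$.

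The hard part will be this final step, specifically the constructive identification of the full-measure set of Lebesgue points of $v$ together with an explicit modulus of convergence for the nontangential limit. This requires an effective Lebesgue-differentiation theorem for functions in $L_{1}(\partial D,\overline{\sigma}_{m,0,1})$, in the form of a constructive Vitali-type covering argument on the $(m-1)$-sphere with quantitative control on the exceptional set. The author flags precisely this point as a conjecture in the abstract, and a fully constructive execution of this step is beyond the scope of the present article; the proof as outlined therefore reduces the nontangential limit theorem to the constructive validity of boundary Lebesgue differentiation.
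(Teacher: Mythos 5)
The paper offers no proof of this theorem at all: it is stated as a reformulation of the \emph{classical} nontangential limit theorem, immediately followed by the author's conjecture that, under hypothesis (i) alone, it has \emph{no} constructive proof (and a second conjecture that it does if (ii) is also assumed). Your proposal therefore cannot be compared to a proof in the paper; it has to stand on its own, and it does not. The first and most serious gap is your preliminary reduction of condition (ii) to condition (i). You argue that $I_{2,r}$ is nondecreasing and bounded by $1$, hence has a limit $b_{2}$, and that the identity $I_{3,r}=I_{1,r}+I_{2,r}-1$ transfers the rate $\delta_{1}$ to a rate $\delta_{2}$. Constructively, a bounded monotone sequence need not converge, and the identity only shifts the problem: a rate for $I_{2,r}$ is equivalent to a rate for $I_{3,r}$, which is again merely nondecreasing and bounded (by $b_{1}$, via $|\overline{\lambda}(v)|\leq|v|$). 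Knowing a rate for $I_{1,r}$ gives no rate for the \emph{sum} $I_{3,r}=I_{1,r}+I_{2,r}-1$ of two monotone quantities. If this reduction were valid, the author's two conjectures at the end of the paper would collapse into one; the whole point of keeping (i) and (ii) as separate hypotheses in Theorem \ref{Thm Brownian limit theorem for Hardy space} is that (ii) is genuinely extra constructive information.

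There is a second gap in the transfer step, independent of constructivity. For $p=1$ the Poisson representation $u(x)=\int_{w\in\partial D}\mathbf{k}_{0,1}(x,w)v(w)\overline{\sigma}_{m,0,1}(dw)$ with $v$ integrable is false in general: a member of $\mathbf{h}^{1}$ is classically the Poisson--Stieltjes integral of a finite signed measure which may have a singular part. The Poisson kernel $u\equiv\mathbf{k}_{0,1}(\cdot,z_{0})$ itself lies in $\mathbf{h}^{1}$ with $I_{1,r}=1$ for all $r$, its Brownian boundary limit $V$ is $0$ a.s., so your $v$ would be $0$, whose Poisson integral is not $u$. Hence the identity on which your cap-versus-complement estimate rests does not hold; the correct classical argument differentiates the boundary \emph{measure}, not a boundary function. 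Your final acknowledgement --- that an effective Lebesgue differentiation theorem on the sphere is needed and is not supplied --- is honest and correctly locates one obstruction, but the proposal as written is not a proof modulo that one open step: it already fails at the derivation of (ii) from (i) and at the Poisson representation.
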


We conjecture that Theorem \ref{Thm. Classical-nontangential-limit theorem for Hardy spaces}
does not have a constructive proof. We conjecture however that if
condition (ii) in Theorem \ref{Thm Brownian limit theorem for Hardy space}
also holds then Theorem \ref{Thm. Classical-nontangential-limit theorem for Hardy spaces}
has a constructive proof.

\end{document}